\documentclass{scrartcl}
\usepackage{typearea}
\typearea{15}
\usepackage[all]{xy}

\usepackage{comment}
%\includecomment{comment}

%
%---------- Macros by Tarig Abdelgadir ----------
%

%
%---------- Macros by Shinnosuke Okawa ----------
%

%\newcommand{\Gr}{\operatorname{Gr}}

\newcommand{\Class}{\operatorname{Cl}}

\newcommand{\tors}{\operatorname{tors}}
\newcommand{\Bl}{\operatorname{Bl}}
\newcommand{\grmod}{\operatorname{grmod}}

\newcommand{\Kdim}{\operatorname{Kdim}}
\newcommand{\Fun}{\operatorname{Fun}}

\newcommand{\CC}{\mathop{CC}\nolimits}
\newcommand{\dbt}{[\![t]\!]}
\newcommand{\Stab}{\operatorname{Stab}}
\newcommand{\cMquad}{\operatorname{\cM_{\mathrm{quad}}}}

\newcommand{\Mbarquad}{\operatorname{\Mbar_{\mathrm{quad}}}}

\newcommand{\Mbartri}{\operatorname{\Mbar_{\mathrm{tri}}}}
\newcommand{\Mrel}{\operatorname{M_{\mathrm{rel}}}}
\newcommand{\Mbarrel}{\operatorname{\Mbar_{\mathrm{rel}}}}
\newcommand{\Mbarst}{\operatorname{\Mbar_{1,2}}}
\newcommand{\Mbarstb}{\operatorname{\Mbar_{1,1}}}
\newcommand{\Deltarel}{\Delta_{\mathrm{rel}}}
\renewcommand{\ss}{\mathrm{ss}}
\newcommand{\s}{\mathrm{s}}
\newcommand{\irr}{\mathrm{irr}}
\newcommand{\I}{\mathrm{I}}
\newcommand{\Ver}{\mathrm{Ver}}
\newcommand{\GIT}{/\!\!/}
\newcommand{\pibar}{\overline{\pi}}

\if0
\usepackage[dvipdfm,bookmarks=true,bookmarksnumbered=true,setpagesize=false,%
 colorlinks=true,%
 pdftitle={},%
 pdfsubject={},%
 pdfauthor={},%
 pdfkeywords={TeX; dvipdfmx; hyperref; color;},%
 colorlinks=false]{hyperref}
\fi
%

%
%---------- Macros by Kazushi Ueda ----------
%

%\usepackage{showkeys}

\usepackage{mymacros}

%
%---------- only in this paper ----------
%

\newcommand{\bk}{\bsk}

\newcommand{\Isom}{\operatorname{Isom}}
\newcommand{\op}{\mathrm{op}}
\newcommand{\Der}{\operatorname{Der}}
\newcommand{\prim}{\mathrm{prim}}
\newcommand{\nonprim}{\mathrm{non\text{-}prim}}

\newcommand{\bdim}{\mathop{\bfd \bfi \bfm}\nolimits}
\newcommand{\bt}{\bst}
\newcommand{\fm}{\frakm}
\newcommand{\Alg}{\operatorname{Alg}}
\newcommand{\GrAlg}{\operatorname{GrAlg}}

\newcommand{\xc}[1]{\textcolor{red}{#1}}

%
%-------------------- title -------------------------
%

\title{Compact moduli of noncommutative projective planes}
\author{Tarig Abdelgadir, Shinnosuke Okawa, Kazushi Ueda}
\date{}
\pagestyle{plain}

%
%-------------------- text stars --------------------
%

\begin{document}

\maketitle

\begin{abstract}
We introduce the notion of the moduli stack of relations of a quiver.
When the quiver with relations is derived-equivalent
to an algebraic variety,
the corresponding compact moduli scheme can be viewed
as a compact moduli of noncommutative deformations of the variety.
We study the case of noncommutative projective planes in detail,
and discuss its relation with geometry of elliptic curves
with level 3 structures.
\end{abstract}

%\tableofcontents

\section{Introduction}

Let $X$ be a smooth projective variety defined over a field $\bk$.
We assume that $\bk$ is an algebraically closed field
of characteristic different from 3
throughout this paper.
The deformation of $X$ as an algebraic variety is controlled
by the first cohomology $H^1(\Theta_X)$ of the tangent bundle,
whereas the deformation of the abelian category $\Qcoh X$ is controlled
by the the second Hochschild cohomology \cite{MR2238922,MR2183254},
which is decomposed as
\begin{align} \label{eq:HKR}
 \HH^2(X) \cong H^2(\cO_X) \oplus H^1(\Theta_X) \oplus H^0(\Lambda^2 \Theta_X) 
\end{align}
by the Hochschild-Kostant-Rosenberg isomorphism
\cite{MR0142598,MR1390671,MR1940241}.
The direct summands of \eqref{eq:HKR} correspond to
gerby, classical, and noncommutative deformations respectively.

Assume that the bounded derived category of coherent sheaves
on the variety $X$ has a full strong exceptional collection
$(E_1, \ldots, E_n)$.
Morita theory for derived categories
\cite{Bondal_RAACS,Rickard} gives an equivalence
\begin{align} \label{eq:Morita}
\RHom_{X} ( \oplus_{i=1}^{n} E_i , - )
\colon
D^b \coh X \simto D^b \module A
\end{align}
with the derived category of finitely-generated right modules
over the total morphism algebra
\begin{align} \label{eq:tma}
 A = \bigoplus_{i,j=1}^n \Hom(E_i, E_j).
\end{align}
The derived equivalence \eqref{eq:Morita} induces
a homotopy equivalence of Hochschild complexes
\begin{align} \label{eq:HH_XA}
 \CC_{\bk}^\bullet(X) \cong \CC_{\bk}^\bullet(A)
\end{align}
as $L_{\infty}$-algebras
\cite{Keller_DI,MR2183254},
which control the deformations of $X$ and $A$ respectively.
%The dgla of $A$ controls its deformations as associative algebra
%\cite{MR0161898}, and it seems to be known to the right circle
%that the formal moduli theory associated to dgla is invariant under
%$L_{\infty}$-homotopy equivalences.
This motivates us to study noncommutative deformations of $X$
through deformations of $A$.

The total morphism algebra $A$
%of an exceptional collection
is an algebra over the semisimple ring $\bk^{ n}$, and
can be described by a quiver with relations.
Deformations of relations lead to deformations of algebras
over
$
\bk^{n}
$,
and any sufficiently small such deformation of algebras come from
deformation of relations.
Since there is a natural quasi-isomorphism
\begin{align}
 \CC_{\bk}^{\bullet}(A) \cong \CC_{\bk^{n}}^{\bullet}(A)
\end{align}
of dglas
(cf.~e.g.~\cite[Remark 3.10]{Seidel_K3}),
deformation theory of
$
A
$
over
$
\bk^{n}
$
is isomorphic to that over
$
\bk
$.

In this paper, we introduce the moduli stack $\scrR(Q)$
of relations of a quiver $Q$,
and the moduli stack $\scrA$ of finite-dimensional algebras.
There is a natural morphism
from $\scrR(Q)$ to $\scrA$ which sends a relation
to the quotient of the path algebra by the relation.
%We expect that (the coarse moduli space of the stable locus of)
%the former stack is a `covering space' of the
%`moduli space of deformations of
%$
%\Coh X
%$',
%in the situation when
%$
%\Coh X
%$
%can be recovered from its derived category.
%This is the case when the canonical or the anti-canonical
%bundle of
%$
%X
%$
%is ample, due to \cite{Bondal-Orlov}.

The latter half of the paper is devoted to a detailed study of the
moduli of relations of the Beilinson quiver with three vertices and its relation
to noncommutative projective planes.
A \emph{3-dimensional Sklyanin algebra}
is the unital associative algebra
$
 S(a,b,c)=\bk \la x,y,z \ra / (f_1,f_2,f_3)
$
generated by three elements $x$, $y$, and $z$
with relations
\begin{align} \label{eq:Sklyanin2}
\begin{split}
 f_1 & = a y z + b z y + c x^2, \\
 f_2 & = a z x + b x z + c y^2, \\
 f_3 & = a x y + b y x + c z^2,
\end{split}
\end{align}
where $(a:b:c)$ is a parameter
in the complement of a finite set in $\bP^2$.
They are originally introduced in \cite{MR684124}
from the point of view of quantum inverse scattering method
%in discrete quantum integrable systems
and Yang-Baxter equations.
They subsequently attracted much attention
%from noncommutative algebraic geometers,
as an important class of
\emph{quadratic AS-regular algebras of dimension 3}
\cite{Artin_Schelter}.

A \emph{noncommutative projective plane}
is an abelian category of the form $\Qgr S$
for a quadratic AS-regular algebra $S$ of dimension 3.
%It is a geometric incarnation of $S$.
Quadratic AS-regular algebras of dimension 3 are classified
in \cite{Artin_Tate_Van_den_Bergh}
in terms of triples $(E, \sigma, L)$
of a cubic divisor $E$ in $\bP^2$,
an automorphism $\sigma$ of $E$,
and a line bundle $L = \cO_{\bP^2}(1)|_E$ on $E$.

The correspondence
$
(E, \sigma, L)
\mapsto
\Qgr S(E, \sigma, L)
$
from the isomorphism classes of triples
to the equivalence classes of abelian categories
is generically nine to one.
To have a one-to-one correspondence,
%with equivalence classes of abelian categories,
one needs to pass from AS-regular algebras
to \emph{AS-regular $\bZ$-algebras}.
Quadratic AS-regular $\bZ$-algebras of dimension 3
are classified in \cite{MR1230966,MR2836401}
by triples $(E, L_0, L_1)$
of a cubic divisor $E$ in $\bP^2$
and two line bundles $L_0$, $L_1$
of degree 3.
These line bundles embed $E$
as a complete intersection
of three bidegree $(1,1)$-hypersurfaces
in $\bP^2 \times \bP^2$,
which is a graph of an automorphism $\sigma$ of $E$.

Classification of triples $(E, L_0, L_1)$ is equivalent
to classification of quadruples $(V_0, V_1, V_2, W)$
consisting of three vector spaces $V_0$, $V_1$, $V_2$
of dimension 3 and a 1-dimensional subspace $W$
of $V_0 \otimes  V_1 \otimes V_2$.
A generator of this subspace is a \emph{potential}
for the relation \eqref{eq:Sklyanin2}.

A noncommutative projective plane has
a full strong exceptional collection,
which is a generalization of the Beilinson collection
\cite{Beilinson} in the commutative case.
The total morphism algebra of this collection
is described by the \emph{Beilinson quiver} $Q$
in \pref{fg:P2_quiver},
equipped with relations $I \subset \bk Q$
coming from \eqref{eq:Sklyanin2}.

\begin{figure}
%\ \vspace{5mm}
\begin{align*}
\begin{psmatrix}[rowsep=35mm,colsep=35mm,mnode=circle]
 0 & 1 & 2
\psset{nodesep=3pt,arrows=->}
\nccurve[angleA=15,angleB=165,offset=3pt]{1,1}{1,2}
 \lput*{N}(0.5){x_0}
\ncline[offset=0pt]{1,1}{1,2}
 \lput*{N}(0.5){y_0}
\nccurve[angleA=-15,angleB=195,offset=-3pt]{1,1}{1,2}
 \lput*{N}(0.5){z_0}
\nccurve[angleA=15,angleB=165,offset=3pt]{1,2}{1,3}
 \lput*{N}(0.5){x_1}
\ncline[offset=0pt]{1,2}{1,3}
 \lput*{N}(0.5){y_1}
\nccurve[angleA=-15,angleB=195,offset=-3pt]{1,2}{1,3}
 \lput*{N}(0.5){z_1}
\end{psmatrix}
\end{align*}
%\ \vspace{5mm}
\caption{The Beilinson quiver for $\bP^2$}
\label{fg:P2_quiver}
\end{figure}

In this paper,
we give compact moduli schemes
$\Mbartri$, $\Mbarquad$, and $\Mbarrel$ of
triples $(E, L_0, L_1)$,
quadruples $(V_0, V_1, V_2, W)$, and
relations $I \subset \bk Q$.
The \emph{compact moduli of triples}
is defined as the quotient
\begin{align}
 \Mbartri = S(3)/G_{216}
\end{align}
of the elliptic modular surface
of level 3 \cite{MR0429918,MR784140}
by the Hessian group.
%(\cite{Artebani-Dolgachev}).
The \emph{elliptic modular surface} $S(3)$ is
a compactification of the quotient
$
 S'(3) = (\bH \times \bC) / (\Gamma(3) \ltimes \bZ^2)
$
of $\bH \times \bC$
by the natural action
\begin{align}
 (\gamma,m,n) \colon (\tau, z) \mapsto
  \lb \frac{a\tau+b}{c\tau+d}, \frac{z+m\tau+n}{c\tau+d} \rb
\end{align}
of the semidirect product of
$\Gamma(3) = \Ker(\SL_2(\bZ) \twoheadrightarrow \SL_2(\bF_3))$
acting on $\bZ^2$,
and the \emph{Hessian group}
$G_{216} = \SL_2(\bF_3) \ltimes (\bZ/3\bZ)^2$
acts naturally on it.
The \emph{compact moduli of quadruples}
is the geometric invariant theory quotient
\begin{align}
 \Mbarquad = \bP(V_0 \otimes V_1 \otimes V_2)^\ss
  /\!/ \SL(V_0) \times \SL(V_1) \times \SL(V_2)
\end{align}
of the 26-dimensional projective space
%$\bP(V_0 \otimes V_1 \otimes V_2)$
by the natural action of the 24-dimensional group.
%$\SL(V_0) \times \SL(V_1) \times \SL(V_2)$.
The \emph{compact moduli of relations}
is the geometric invariant theory quotient
\begin{align}
 \Mbarrel = \Gr_3(V_0 \otimes V_1)^\ss /\!/ \SL(V_0) \times \SL(V_1)
\end{align}
of the Grassmannian of 3-planes in a 9-dimensional space
by the natural action of the 16-dimensional group.
The main result is the relation among these moduli schemes:

\begin{theorem} \label{th:main}
\ \\[-6mm]
\begin{enumerate}
 \item
$\Mbarquad$ is isomorphic
to the weighted projective plane $\bP(6,9,12)$.
 \item
$\Mbarquad$ and $\Mbarrel$ are naturally isomorphic.
 \item
There is a natural birational morphism $\Mbartri \to \Mbarquad$
which contracts a non-singular rational curve to a point.
 \item
 There exists a natural isomorphism
 $
 \Mbartri
 \cong
 \Mbarst.
 $
 Under this isomorphism,
 the contraction of
 3
 above is identified with the weighted blow-up
 $
 \Mbarst
 \to
 \bP(1,2,3)
 $
 which contracts a boundary prime divisor
 $
 \Delta_{0,2}
 $
 obtained as the section of
 $
 \Mbarst
 \to
 \Mbarstb
 $.
\end{enumerate}
\end{theorem}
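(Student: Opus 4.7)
The plan is to treat the four parts in order, with (1)--(2) reducing to invariant theory while (3)--(4) rely on the detailed geometry of triples and their modular interpretation.

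For (1), I would compute the invariant ring $\bk[V_0 \otimes V_1 \otimes V_2]^{\SL(V_0) \times \SL(V_1) \times \SL(V_2)}$; classically this is a polynomial algebra on three algebraically independent generators of degrees $6, 9, 12$, and the cleanest modern route restricts a generic trilinear form to a Hesse normal form and observes that the residual group is the Hessian group $G_{216}$, whose ring of invariant modular forms on $S(3)$ is well-known. GIT semistability then identifies $\Mbarquad = \operatorname{Proj} \bk[I_6, I_9, I_{12}] \cong \bP(6,9,12)$. For (2), I would construct $\Mbarquad \to \Mbarrel$ by sending a potential $\Phi \in V_0 \otimes V_1 \otimes V_2$ to the image of its contraction $V_2^* \to V_0 \otimes V_1$; on the stable locus this image is three-dimensional, the assignment is $\SL(V_0) \times \SL(V_1)$-equivariant, and the $\SL(V_2)$-ambiguity is absorbed into the projective class of $\Phi$. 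An inverse is produced from a stable $I \in \Gr_3(V_0 \otimes V_1)$ by setting $V_2 := I^*$ and reading off the potential as the tautological element of $I^* \otimes V_0 \otimes V_1$, and mutual inversion on the stable loci extends to an isomorphism of GIT quotients.

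For (3), given a triple $(E, L_0, L_1)$ I would set $V_i = H^0(E, L_i)$ (with $L_2$ determined by the $\bZ$-algebra recurrence induced by the automorphism $\sigma$ coming from $L_0$ and $L_1$), and take $W \subset V_0 \otimes V_1 \otimes V_2$ to be the one-dimensional kernel of the multiplication map $V_0 \otimes V_1 \otimes V_2 \to H^0(E, L_0 \otimes L_1 \otimes L_2)$. This gives the natural morphism $\scMtri \to \cMquad$ and descends to $\Mbartri \to \Mbarquad$. Failure of bijectivity is confined to the locus where $L_0 \cong L_1$, so $\sigma$ is trivial and $S(E, \sigma, L)$ degenerates to the commutative polynomial algebra, a single point of $\Mbarquad$. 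Its preimage is a smooth rational curve, identified via a local computation near the corresponding $G_{216}$-invariant divisor of $S(3)$ using the explicit Weierstrass form of the elliptic fibration $S(3) \to X(3)$.

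For (4), I would identify $\Mbartri$ with $\Mbarst$ via $(E, L_0, L_1) \leftrightarrow (E, p_0, p_1)$, where $p_i$ is chosen so that $L_i \cong \cO_E(3 p_i)$. For smooth $E$ the nine-fold indeterminacy of this choice is exactly absorbed by the $(\bZ/3\bZ)^2$ factor of $G_{216}$, while the $\SL_2(\bF_3)$ quotient forgets the level-$3$ structure; the identification extends over the boundary via stable reduction in the Deligne--Mumford sense. The section $\Mbarstb \hookrightarrow \Mbarst$, $(E, p) \mapsto (E, p, p)$, is precisely $\Delta_{0,2}$, and by construction it matches the exceptional curve of (3). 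Since $\bP(6, 9, 12) \cong \bP(1, 2, 3)$ as schemes (via the degree-$3$ Veronese reduction), the contraction reads as a morphism $\Mbarst \to \bP(1, 2, 3)$, and a local check of stacky tangent data at the image of $\Delta_{0,2}$ confirms it is a weighted blow-up. The main obstacle I anticipate lies in (3): to verify that the exceptional locus really is a single smooth rational curve requires careful control of the $G_{216}$-action near elliptic curves with extra automorphisms; the remaining parts reduce to classical invariant theory, a diagram chase, or a comparison of standard moduli descriptions.
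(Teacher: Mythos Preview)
Your treatment of (1) and (2) is essentially the paper's: the invariant ring is polynomial on generators of degrees $6,9,12$ by classical results (equivalently via Vinberg's theory of graded Lie algebras), and the Grassmannian presentation of $\Mbarrel$ identifies it with $\Mbarquad$ directly.

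For (3) the paper does not argue by local computation. The key tool you are not exploiting is that Vinberg's theorem gives an \emph{isomorphism} $\bk[V_0\otimes V_1\otimes V_2]^{\SL^{\times 3}} \cong \bk[\frakc]^{W}$, where $\frakc \cong \bk^3$ is spanned by the Hesse normal forms and $W \cong G_{648}$ is the Weyl group. Hence $\bP^2/G_{216} \simto \Mbarquad$, and the morphism $\Mbartri = S(3)/G_{216} \to \Mbarquad$ factors as the blow-down $S(3)/G_{216} \to \bP^2/G_{216}$ followed by this isomorphism. The contracted curve is then transparently the common image of the nine sections $D_0,\dots,D_8$, with no local analysis required. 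You invoke the Hesse normal form in (1) but do not carry it over to (3), where it does the real work.

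For (4) there is a genuine gap: the correspondence $(E,L_0,L_1) \leftrightarrow (E,p_0,p_1)$ with $L_i \cong \cO_E(3p_i)$ is not a bijection. The pair $(p_0,p_1)$ has an $81$-fold ambiguity, and modulo $\Aut(E)$ this collapses only by a factor of $9$: for generic $E$ the $2$-pointed curves $(E,p_0,p_1)$ and $(E,p_0+t_0,p_1+t_1)$ with $t_i \in E[3]$ are isomorphic only when $t_0 = t_1$. Thus your map from triples to $2$-pointed curves is $9$-valued, and the $(\bZ/3)^2$ factor of $G_{216}$ does not absorb this. The paper's construction is different: it passes to $\Sbar(3) = S(3)/(\bZ/3)^2$, whose fibers over $X(3)$ are the \emph{quotient} curves $E/E[3]$ with a single section, and takes the classifying morphism $\Sbar(3) \to \scMbar_{1,2}$ of the resulting family of stable $2$-pointed curves. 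In terms of the original data this sends $(E,\cO(3o),\cO(3p))$ to (the class of) $(E,o,3p)$ rather than $(E,o,p)$; the point $3p$ is determined by $L_1 \otimes L_0^{-1} \in \Pic^0(E)$ with no residual ambiguity. The identification of the contraction with the known weighted blow-up $\Mbarst \to \bP(1,2,3)$ is then imported from the existing literature on the birational geometry of $\Mbarst$, rather than recomputed.
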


Here, $\Mbar_{g,n}$ is the coarse moduli space
of the moduli stack $\cMbar_{g,n}$ of stable curves
of genus $g$ with $n$ marks points.
The moduli space $\Mbarquad$ also appears
as the moduli space of 3-qutrit states
\cite{1402.3768,MR2105225}.
The relevant invariant theory is worked out
in \cite{MR0000221,MR0430168}, and the classification of orbits
and the analysis of their stability are carried out in \cite{MR1361786,MR1348793}.
It is interesting to note that
the image of the contraction $\Mbartri \to \Mbarquad$
can be described either as the quotient of $\bP^2$
by the natural action of $G_{216}$,
or as the quotient of $\bP(V_0 \otimes V_1 \otimes V_2)$
by the natural action of $\SL(V_0) \times \SL(V_1) \times \SL(V_2)$.
We will see in \pref{sc:proof} that this coincidence is a consequence of the invariant theory
due to Vinberg \cite{MR0430168},
applied to a certain graded Lie algebra.

This paper is organized as follows:
In \pref{sc:quiver},
we recall basic definitions on quivers and their relations.
The moduli stack $\scrR(Q)$ of relations on a quiver $Q$
is defined in \pref{sc:moduli_r},
and the moduli stack $\scrA$ of algebras is defined in \pref{sc:moduli_a}.
We discuss the relation between $\scrR(Q)$ and $\scrA$
in \pref{sc:morphism}.
In \pref{sc:mod_rep}, we see the embedding of the moduli of representations of
a quiver with relations into that without relations.
In the case of the Beilinson quiver with three vertices,
this recovers the correspondence between noncommutative projective planes and
elliptic triples.
%The main theorem is proved in \pref{sc:nc_P2}.
In \pref{sc:AS-regular},
we recall basic definitions and results
on 3-dimensional quadratic AS-regular algebras
following \cite{Artin_Tate_Van_den_Bergh,
MR2303228,MR2708379}.
In \pref{sc:ASZ},
we collect basic definitions and results
on quadratic AS-regular $\bZ$-algebra
of dimension 3
from \cite{MR1230966,MR2836401}.
The compact moduli scheme of relations of the Beilinson quiver
is studied in \pref{sc:Mrel},
and the compact moduli scheme of triples is studied
in \pref{sc:Mtri}.
The proof of \pref{th:main} is given in \pref{sc:proof} and \pref{sc:isom_with_Mbar_1,2}.

\subsection*{Acknowledgements}
The authors thank Michel Van den Bergh for
communicating the proof of \pref{th:SVdB}.
%and allowing us to include it in the appendix.
They also thank Izuru Mori for many important discussions and comments.
We owe a lot to his unpublished manuscript
\cite{Mori_in_preparation}.
T.~A. is supported by Osaka University Invitation Program for Research Abroad.
S.~O. is supported by JSPS Grant-in-Aid for Young Scientists No.~25800017, and
a part of this work was done during his stay at the Max-Planck-Institute
f\"ur Mathematik.
K.~U. is supported by JSPS Grant-in-Aid for Young Scientists No.~24740043.
A part of this work is done
while the authors are visiting Korea Institute for Advanced Study,
whose hospitality and nice working environment
is gratefully acknowledged.

\section{Quivers with relations}
 \label{sc:quiver}

Let us start by recalling the definition of a quiver with relations:

\begin{definition}\ \\[-5mm]
\begin{enumerate}
\item
A {\em quiver} $(Q_0, Q_1, s, t)$ consists of
\begin{itemize}
 \item
a set $Q_0$ of vertices,
 \item
a set $Q_1$ of arrows, and
 \item
two maps
$
s, t \colon Q_1 \rightrightarrows Q_0$ from $Q_1$ to $Q_0$.
\end{itemize}
For an arrow $a \in Q_1$,
The vertices $s(a)$ and $t(a)$
are called the {\em source}
and the {\em target} of $a$ respectively.
 \item
A quiver $(Q_0, Q_1, s, t)$ is \emph{finite}
if both $Q_0$ and $Q_1$ are finite sets.
We will always assume that a quiver is finite in this paper.
%unless otherwise stated.
 \item
A {\em path} on a quiver
is an ordered set of arrows
$(a_n, a_{n-1}, \dots, a_{1})$
such that $s(a_{k+1}) = t(a_k)$
for $k=1, \dots, n-1$.
The number of arrows in a path
is called the \emph{length} of the path.
We also allow for a path $e_i$ of length zero,
starting and ending at the same vertex $i \in Q_0$.
\item
The {\em path algebra} $\bk Q$
of a quiver $Q$ is the algebra
spanned by the set of paths
as a vector space,
and the multiplication is defined
by the concatenation of paths;
\begin{align}
 (b_m, \dots, b_1) \cdot (a_n, \dots, a_1)
  = \begin{cases}
     (b_m, \dots, b_1, a_n, \dots, a_1) & s(b_1) = t(a_n), \\
      0 & \text{otherwise}.
    \end{cases}
\end{align}
The path algebra is graded by the length of the path.
%$\bk Q = \bigoplus_{n=0}^\infty (\bk Q)_n$,
%$\deg(a_n, \ldots, a_1) = n$.
 \item
An \emph{oriented cycle}
is a path of positive length starting and ending at the same vertex.
The path algebra of a finite quiver is finite-dimensional
if and only if it has no oriented cycles.
 \item
A \emph{quiver with relations} is a pair
$\Gamma = (Q, I)$ of a quiver $Q$
and a two-sided ideal $I$
of the path algebra $\bk Q$.
We will always assume that $I$ is contained
in the two-sided ideal $(\bk Q)_{\ge 2}$
generated by paths of length two.
The \emph{path algebra} of $\Gamma$ is defined as
$\bk \Gamma = \bk Q / I$.
 \item
A \emph{representation} of a quiver is a right module
over the path algebra $A$. 
%, and in this paper we only consider finite-dimensional modules.
If
$
V
$
is finite dimensional as a
$
\bk
$-vector space,
the \emph{dimension vector} of an $A$-module $V$
is defined by $(\dim_\bk V e_i)_{i \in Q_0} \in \bZ^{Q_0}$.
The \emph{dimension matrix} of an $A$-bimodule
(i.e., a right $A^{\op} \otimes_\bk A$-module)
is defined similarly as
$(\dim_\bk e_i V e_j)_{i,j \in Q_0} \in \End(\bZ^{Q_0})$.
\end{enumerate}
\end{definition}

The path algebra $A = \bk \Gamma$
of a quiver $\Gamma = (Q, \cI)$ with relations
admits a direct sum decomposition
\begin{align} \label{eq:idemp1}
 A = \bigoplus_{i,j \in Q_0} A_{ij}, \quad
 A_{ij} = e_i A e_j
\end{align}
satisfying
\begin{gather}
 A_{ij} \cdot A_{kl} = 0 \quad \text{if } j \ne k, \\
 A_{ij} \cdot A_{jk} \subset A_{ik}.
\end{gather}
In other words,
it is an algebra over the semisimple algebra $\bk^{Q_0}$
generated by $\{ e_i \}_{i \in Q_0}$.
Note that giving an associative algebra $A$ over $\bk^{Q_0}$
is equivalent to giving an associative algebra $A$
with idempotents $\{ e_i \}_{i \in Q_0}$
satisfying
$
 \sum_{i \in Q_0} e_i = 1.
$
These idempotents give the direct sum decomposition
\eqref{eq:idemp1}.
An element $a \in A_{ij}$ is called \emph{primitive}
if it is not contained in the space of non-primitive paths,
which is defined by
\begin{equation}
A_{ij}^\nonprim
=
\Image{
\left(
\bigoplus_{m \in Q_0} A_{im} \times A_{mj} \to A_{ij}
\right)
}.
\end{equation}

% the image $A_{ij}^\nonprim$ of the map
%$
 %\bigoplus_{m \in Q_0} A_{im} \times A_{mj} \to A_{ij}.
%$

%Conversely,
Any finite-dimensional associative algebra $A$
can be described as the path algebra
of a finite quiver with relations:

\begin{proposition} \label{pr:alg_quiver}
Let $Q_0$ be a finite set and
$A$ a finite-dimensional algebra
over $\bk^{Q_0}$.
Then there is a finite quiver $Q$ with the set $Q_0$ of vertices
and a two-sided ideal $I \subset (\bk Q)_{\ge 2}$
such that one has an isomorphism $\bk Q/I \simto A$
of $\bk^{Q_0}$-algebras.
\end{proposition}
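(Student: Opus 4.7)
The plan is to apply the standard Gabriel-type construction: read off the arrows from the radical filtration of $A$ and then lift to a surjective algebra homomorphism out of the path algebra. Let $J = \mathrm{rad}(A)$, which is nilpotent since $A$ is finite dimensional. The setup tacitly assumes that the structure morphism $\bk^{Q_0} \to A$ identifies $\bk^{Q_0}$ with the semisimple quotient $A/J$, so that the $e_i$ form a complete set of pairwise non-isomorphic primitive idempotents of $A$; this is the context in which path algebras over $\bk^{Q_0}$ are used throughout the paper.

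For each ordered pair $(i,j) \in Q_0 \times Q_0$ I would set $d_{ij} = \dim_\bk (e_j J e_i / e_j J^2 e_i)$ and take $Q$ to be the quiver on vertex set $Q_0$ with exactly $d_{ij}$ arrows from $i$ to $j$. For every arrow $a \colon i \to j$ I would fix a lift $\tilde{a} \in e_j J e_i$ of a chosen basis element of $e_j J e_i / e_j J^2 e_i$, and define a $\bk^{Q_0}$-algebra homomorphism
\[
\phi \colon \bk Q \to A, \qquad e_i \mapsto e_i, \quad a \mapsto \tilde{a}.
\]
Surjectivity of $\phi$ follows from Nakayama's lemma: the $\tilde{a}$ span $J/J^2$ as a $\bk^{Q_0}$-bimodule, hence they generate $J$ as a two-sided ideal of $A$ (because $J$ is nilpotent), and therefore together with the $e_i$ they generate $A$ as a $\bk$-algebra.

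It then remains to check that $I := \ker \phi$ is contained in $(\bk Q)_{\ge 2}$. By construction $\phi$ induces an isomorphism
\[
\bk Q / (\bk Q)_{\ge 2} \simto A / J^2,
\]
being the identity on the semisimple quotient $\bk^{Q_0}$ and the chosen isomorphism $(\bk Q)_{\ge 1}/(\bk Q)_{\ge 2} \simto J/J^2$ on the radical part. Any $x \in \ker \phi$ therefore has trivial image in $\bk Q / (\bk Q)_{\ge 2}$, i.e.\ lies in $(\bk Q)_{\ge 2}$, as required. The point requiring care is the implicit ``basic'' hypothesis $\bk^{Q_0} \simto A/J$; without it the statement would fail (for instance $M_n(\bk)$ over $Q_0$ a single point cannot be presented this way). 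Once that is granted, the rest is a bookkeeping exercise with the radical filtration, and no further essential obstacle arises.
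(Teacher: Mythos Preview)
Your argument is correct and is the standard Gabriel construction; you are also right that the ``basic'' hypothesis $A/\mathrm{rad}(A)\cong\bk^{Q_0}$ is needed, and your $M_n(\bk)$ example shows the statement fails without it (any $\bk Q/I$ with $I\subset(\bk Q)_{\ge2}$ surjects onto $(\bk Q)_0=\bk^{Q_0}$, which $M_n(\bk)$ over a single vertex does not). The paper's proof follows the same outline---choose generators, get a surjection $\bk Q\to A$, take the kernel---but organises the bookkeeping differently: instead of the radical filtration it reads the arrows off $A_{ij}/A_{ij}^{\nonprim}$, where $A_{ij}^{\nonprim}$ is the span of products $A_{im}\cdot A_{mj}$. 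In the situation the paper actually uses (algebras coming from exceptional collections, so $A_{ii}=\bk e_i$ and there are no oriented cycles) this agrees with your $e_j J e_i/e_j J^2 e_i$. Your version has two advantages over the paper's: it makes the needed basic hypothesis explicit, and it actually verifies $I\subset(\bk Q)_{\ge2}$ via the induced isomorphism $\bk Q/(\bk Q)_{\ge2}\simto A/J^2$, a step the paper leaves implicit.
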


\begin{proof}

For each pair of vertices
$
i, j \in Q_0
$,
take a finite set of ``primitive elements''
$
\{ f_{ijk} \}_{k=1}^{d_{ij}}
\subset
A_{ij}
$
so that the set of their classes $\{ [f_{ijk}] \}_{k=1}^{d_{ij}}$ in the vector space 
$
 A_{ij}^\prim := A_{ij}/ A_{ij}^\nonprim
 %\Image(\bigoplus_{m} A_{im} \times A_{mj} \to A_{ij})
$
is a basis.
%of $A_{ij}^\prim$.
Then the set %$\{ f_{ijk} \}_{k=1}^{d_{ij}}$
$
\{
f_{ijk} | i,j \in Q_0, \, k \in \{ 1, \ldots, d_{ij} \}
\}
$
generates $A$ as a $\bk^{Q_0}$-algebra.
Let $Q$ be the quiver
whose set of vertices is given by $Q_0$
and whose set of arrows is
$Q_1 = \{ a_{ijk} \}_{i,j \in Q_0, \, k \in \{ 1, \ldots, d_{ij} \} }$
such that $s(a_{ijk}) = j$ and $t(a_{ijk}) = i$.
Then one gets a natural epimorphism of $\bk^{Q_0}$-algebras
from $\bk Q$ to $A$ sending $a_{ijk}$ to $f_{ijk}$.
The ideal of relations $I$ is defined as the kernel of this morphism.
\end{proof}

A typical example of an algebra
satisfying the assumption of \pref{pr:alg_quiver} comes
%as the total endomorphism algebra of
from an exceptional collection.
Let $\scrT$ be a triangulated category
over a field $\bk$.
We will always assume that a triangulated category
has a dg enhancement
in the sense of Bondal and Kapranov
\cite{Bondal-Kapranov_ETC}.
\begin{definition}
An object $E$ of a triangulated category $\scrT$ is \emph{exceptional}
if
\begin{align}
 \Hom^i(E, E) =
\begin{cases}
 \bk \cdot \id_E & i = 0, \\
 0 & \text{otherwise}.
\end{cases}
\end{align}
A sequence $(E_1, \ldots, E_n)$ of objects of $\scrT$
is an \emph{exceptional collection} if
\begin{align}
 \Hom^i(E_j, E_k) = 0
  \text{ for any } i \in \bZ
  \text{ and any } 1 \le k < j \le n.
\end{align}
An exceptional collection $(E_1, \ldots, E_n)$ is \emph{strong} if
\begin{align}
 \Hom^i(E_j, E_k) = 0
  \text{ for any } i \ne 0
  \text{ and any } j, k \in \{ 1, \ldots, n \}.
\end{align}
An exceptional collection $(E_1, \ldots, E_n)$ is \emph{full}
if $\scrT$ is equivalent to its smallest full triangulated subcategory
containing $\{ E_1, \ldots, E_n \}$.
\end{definition}

A triangulated category with a full strong exceptional collection
can be described as the derived category of modules:

\begin{theorem}[{\cite{Rickard,Bondal_RAACS}}]
Let $\scrT$ be a triangulated category with a dg enhancement, and
$(E_1, \ldots, E_n)$ be a full strong exceptional collection in $\scrT$.
Then one has an equivalence
\begin{align}
 \RHom_{\scrT} \lb \bigoplus_{i=1}^{n} E_i , - \rb
 \colon
  \scrT \simto D^b \module A
\end{align}
of triangulated categories
between $\scrT$ and the derived category of finitely-generated right modules
over the total morphism algebra
$
 A = \bigoplus_{i,j=1}^n \Hom(E_i, E_j).
$
\end{theorem}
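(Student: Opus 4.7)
The plan is to work throughout with the given dg enhancement $\scrT^{\mathrm{dg}}$ of $\scrT$. Set $E := \bigoplus_{i=1}^n E_i$, viewed as an object of $\scrT^{\mathrm{dg}}$, and let $\mathcal{A} := \End_{\scrT^{\mathrm{dg}}}(E)$ be its dg endomorphism algebra. The strong exceptionality hypothesis forces $H^k(\mathcal{A}) = 0$ for $k \ne 0$ and $H^0(\mathcal{A}) = A$; in particular $\mathcal{A}$ is formal, so a zig-zag of quasi-isomorphisms $\mathcal{A} \simeq A$ induces an equivalence between the derived category of dg right $\mathcal{A}$-modules and the derived category of $A$-modules.

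Define the functor
\begin{align*}
 F := \RHom_{\scrT^{\mathrm{dg}}}(E, -) \colon \scrT \longrightarrow D^b \module A,
\end{align*}
which is exact because $\scrT^{\mathrm{dg}}$ is pretriangulated. A direct computation shows that $F(E_j)$ is quasi-isomorphic to the indecomposable projective right $A$-module $P_j$ corresponding to the vertex $j$: its cohomology is $\bigoplus_i \Hom(E_i, E_j)$, concentrated in degree $0$ by strong exceptionality. Since the $P_j$ are projective and
\begin{align*}
 \Hom_{D^b \module A}(P_j, P_k[n]) = \Hom^n_{\scrT}(E_j, E_k),
\end{align*}
the restriction of $F$ to the full subcategory spanned by $\{E_1, \ldots, E_n\}$ is fully faithful.

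To upgrade this to all of $\scrT$, I would use fullness of the collection: the smallest strictly full triangulated subcategory of $\scrT$ containing $\{E_1, \ldots, E_n\}$ is all of $\scrT$. A devissage argument applying the five-lemma inductively over the length of a filtration by distinguished triangles built from the $E_i$ propagates full faithfulness from the exceptional objects to every pair of objects of $\scrT$. For essential surjectivity, one observes that the triangulated subcategory of $D^b \module A$ generated by $\{P_1, \ldots, P_n\}$ contains every finitely generated $A$-module (via a projective resolution), so it equals $D^b \module A$; since each $P_j$ lies in the essential image of $F$, the functor is essentially surjective, hence an equivalence.

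The main technical point is the role of the dg enhancement: without it, $F$ cannot be constructed as an exact functor to $D^b \module A$ in a canonical way, and the formality of $\mathcal{A}$ cannot be exploited. With the enhancement in hand, the proof reduces to Keller's theorem on dg Morita equivalence, which is essentially how the cited results of Bondal and Rickard are proved. The only nontrivial conceptual step is the passage from $\mathcal{A}$ to $A$, which is trivialized here because strong exceptionality concentrates the cohomology of $\mathcal{A}$ in a single degree, making formality automatic.
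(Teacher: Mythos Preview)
The paper does not give a proof of this theorem; it is stated with attribution to Rickard and Bondal and used as a black box. Your sketch is a correct outline of the standard dg-enhancement proof: formality of the endomorphism dg algebra from strong exceptionality, full faithfulness on the generators, d\'evissage via the five-lemma, and essential surjectivity from the fact that the indecomposable projectives generate $D^b \module A$. There is nothing to compare against in the paper itself.
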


\section{Moduli of relations}
 \label{sc:moduli_r}

Let $Q = (Q_0, Q_1, s, t)$ be a quiver
without oriented cycles.
For a pair $(i,j)$ of distinct vertices of the quiver $Q$,
we set 
\begin{align}
V_{ij}^\prim=\bigoplus_{
\substack{a \in Q_1
\\
s(a)=j
\\
t(a)=i}}\bk a
\end{align}
and
\begin{align}
 V_{ij}^\nonprim=
 \Image{\lb \bigoplus_{\ell \in Q_0 \setminus \{ i , j \}} e_i (\bk Q) e_\ell
  \otimes e_\ell (\bk Q) e_j\to e_i (\bk Q) e_j \rb}
  =
  e_i \left( ( \bk Q )_{\ge 2} \right) e_j,
\end{align}
so that
$
 e_i (\bk Q) e_j=V_{ij}^\prim \oplus V_{ij}^\nonprim
$
holds.

\begin{proposition} \label{pr:AutQ}
The automorphism group
$
\Aut ( \bk Q / \bk^{Q_0} )
$
is naturally isomorphic to
\begin{align} \label{eq:auto}
  \lb \prod_{i,j} \GL(V_{ij}^\prim) \rb \ltimes
   \lb \prod_{i,j}\Hom_\bk(V_{ij}^\prim, V_{ij}^\nonprim) \rb.
\end{align}
\end{proposition}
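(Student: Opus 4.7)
The plan is to describe any automorphism $\phi \in \Aut(\bk Q / \bk^{Q_0})$ through its values on the arrows, since these generate $\bk Q$ over $\bk^{Q_0}$. Because $\phi$ fixes the idempotents $e_i$, it preserves the block decomposition $\bk Q = \bigoplus_{i,j} A_{ij}$; for each arrow $a$ with source $j$ and target $i$ one decomposes $\phi(a) \in A_{ij} = V_{ij}^\prim \oplus V_{ij}^\nonprim$ uniquely as $\phi(a) = \alpha_{ij}(a) + \beta_{ij}(a)$ with $\alpha_{ij} \in \End(V_{ij}^\prim)$ and $\beta_{ij} \in \Hom_\bk(V_{ij}^\prim, V_{ij}^\nonprim)$. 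The proof reduces to showing that the data $((\alpha_{ij}), (\beta_{ij}))$ parametrizes $\Aut$ bijectively and to extracting the semidirect product structure.

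First I would show that $\phi$ is an automorphism if and only if each $\alpha_{ij}$ is invertible. The ``only if'' direction is immediate by passing to $(\bk Q)/(\bk Q)_{\ge 2} \cong \bk^{Q_0} \oplus \bigoplus V_{ij}^\prim$, on which $\phi$ induces the identity on $\bk^{Q_0}$ and $\bigoplus \alpha_{ij}$ on the degree-one part. For the converse, I would use that $Q$ has no oriented cycles, so $\bk Q$ is finite-dimensional and the augmentation ideal $\fm = (\bk Q)_{\ge 1}$ is nilpotent. Any $\bk^{Q_0}$-algebra endomorphism fixing the idempotents preserves the $\fm$-adic filtration, and the induced map on $\fm^n/\fm^{n+1}$ is the multiplicative extension of the $\alpha_{ij}$'s; invertibility of all $\alpha_{ij}$ gives invertibility on every graded piece, which combined with nilpotence of $\fm$ forces $\phi$ itself to be invertible.

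Next I would read off the semidirect product. The map $\phi \mapsto (\alpha_{ij})$ is a group homomorphism $\Aut(\bk Q/\bk^{Q_0}) \twoheadrightarrow \prod_{i,j} \GL(V_{ij}^\prim)$: since any such $\phi$ sends $V_{ij}^\nonprim$ into itself, the primitive part of $\phi_1 \circ \phi_2$ on $V_{ij}^\prim$ equals $\alpha_{ij}^{(1)} \circ \alpha_{ij}^{(2)}$. This surjection has an obvious splitting given by the ``graded'' automorphisms (those with $\beta_{ij} = 0$), which form a subgroup isomorphic to $\prod \GL(V_{ij}^\prim)$. Its kernel $U$ consists of the $\phi$ with $\alpha_{ij} = \id$ and is parametrized as a scheme by $\prod \Hom_\bk(V_{ij}^\prim, V_{ij}^\nonprim)$ through $\phi \leftrightarrow (\beta_{ij})$, yielding the decomposition asserted in the proposition.

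The main subtlety is the interpretation of the right-hand side: the group law on $U$, transported to $\prod \Hom_\bk(V_{ij}^\prim, V_{ij}^\nonprim)$ through $\phi \leftrightarrow (\beta_{ij})$, is not literally addition of linear maps, because applying $\phi_1$ to an element $\beta_{ij}^{(2)}(a) \in V_{ij}^\nonprim$ produces additional length-increasing contributions. Hence $U$ is a nilpotent (in general non-abelian) unipotent algebraic group whose underlying scheme is nevertheless the affine space $\prod \Hom_\bk(V_{ij}^\prim, V_{ij}^\nonprim)$. Accordingly the isomorphism is read as an identification at the level of schemes carrying the semidirect product group-scheme structure inherited from $\Aut$; the substantive work is establishing the bijection above and tracking which datum controls which factor.
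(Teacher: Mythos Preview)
Your proof is correct and follows essentially the same route as the paper: both determine an automorphism by its values on arrows, decompose those values into primitive and non-primitive parts, argue invertibility via the upper-triangular (equivalently, $\fm$-adic) structure with respect to path length, and identify the normal unipotent factor as the kernel of the map to $\GL$ of the degree-one piece. Your treatment of the splitting by graded automorphisms and the caveat about the group law on the unipotent factor matches the paper's Remark preceding the proof.
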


\begin{remark}
Note that the group structure on the second term of
\eqref{eq:auto}
is different from the addition of linear maps;
in fact, since we are discussing automorphisms of the path algebra,
we have to rather think of the composition of the maps.
See also \cite{Ye:2011uo} for the automorphism group
of the path algebra.
\end{remark}

\begin{proof}
Since the path algebra $\bk Q$ is generated by arrows,
an element
$
 g \in \Aut \lb \bk Q \relmiddle/ \bk^{Q_0} \rb
$
is determined by its action on arrows.
For an arrow $a \in A_{ij}$,
the element $g(a) \in A$ must lie in $A_{ij}$ again
since $g$ is an automorphism as a $\bk^{Q_0}$-algebra.
This gives an element of
\begin{align}\label{eq:endomorphism}
 \Hom_\bk(V_{ij}^\prim, V_{ij})
  \cong \End_\bk(V_{ij}^\prim)
   \times \Hom_\bk(V_{ij}^\prim, V_{ij}^\nonprim)
\end{align}
for each $i, j \in Q_0$.
Conversely, choosing an element
$
g(a) \in A_{ij}
$
for each
$
a \in A_{ij}
$,
we always get an endomorphism of
$
A
$
as a
$
\bk^{Q_0}
$-algebra.

Consider the representation matrix of
$
g
$
as a linear transformation of
$
\bk Q
$, and its block decomposition corresponding to the direct sum
decomposition of
$
\bk Q
$
by the length of path.
Since
$
g
$
does not decrease the length of path,
we get an upper triangular matrix and hence is
invertible precisely when all the diagonal blocks are
invertible.
Hence one sees that the automorphism group coincides with \eqref{eq:auto},
as a subset of the direct product of \pref{eq:endomorphism}
over all
$
i , j
$.
In order to see that the second term of
\eqref{eq:auto} is a normal subgroup,
note that it is the kernel of the map
\begin{align}
\Aut ( \bk Q / \bk^{Q_0} ) \to \GL ( ( \bk Q )_{ \ge 1 } / ( \bk Q )_{ \ge 2 } ).
\end{align}
\end{proof}

The correspondence between relations of a quiver and
algebras obtained as the quotient of the path algebra
is summarized in the following proposition.

\begin{proposition} \label{pr:AutI}
Let $I, J$ be relations of a quiver $Q$. 
\begin{enumerate}[(1)]
 \item \label{it:AutI1}
$\bk Q/I$ and $\bk Q/J$ are isomorphic as $\bk^{Q_0}$-algebras
if and only if there exists an element $g$ of
$
 \Aut \lb \bk Q \big/ \bk^{Q_0} \rb
$
such that $g(I)=J$.
 \item \label{it:AutI2}
There exists a short exact sequence
\begin{align*}
1
 \to \prod_{i,j\in Q_0} \Hom(V_{ij}^\prim, e_iIe_j)
 \to \Stab_I \lb \Aut \lb \bk Q \big/ \bk^{Q_0} \rb \rb
 \to \Aut \lb (\bk Q/I) \big/ \bk^{Q_0} \rb
 \to 1
\end{align*}
of groups.
\end{enumerate}
\end{proposition}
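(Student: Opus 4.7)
The direction $(\Leftarrow)$ of (1) is immediate: any such $g$ descends to an isomorphism $\bk Q / I \simto \bk Q / J$ of $\bk^{Q_0}$-algebras. For $(\Rightarrow)$, given $\phi \colon \bk Q / I \simto \bk Q / J$, I would lift $\phi$ arrow by arrow. For each arrow $a \in V_{ij}^{\prim}$, pick an arbitrary preimage $g(a) \in e_i(\bk Q) e_j$ of $\phi(\bar a) \in \bk Q / J$. Since $I, J \subset (\bk Q)_{\ge 2}$, the residues of the chosen elements modulo $V_{ij}^{\nonprim}$ depend only on $\phi$ and assemble, for each $(i,j)$, into an isomorphism of the primitive part $V_{ij}^{\prim}$. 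By \pref{pr:AutQ}, this choice extends uniquely to a well-defined element $g \in \Aut(\bk Q / \bk^{Q_0})$. By construction, the algebra homomorphisms $\pi_J \circ g$ and $\phi \circ \pi_I$ from $\bk Q$ to $\bk Q / J$ coincide on arrows and on idempotents, hence everywhere; therefore $g(I) \subset J$, and the reverse inclusion follows by applying the identity to $g^{-1}$.

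For (2), define the middle-to-right arrow as the natural descent homomorphism, which is well-defined precisely on $\Stab_I$. Surjectivity is the special case $J = I$ of (1). An element $g \in \Stab_I$ lies in the kernel if and only if $g(a) - a \in I$ for every arrow $a \in V_{ij}^{\prim}$; since $I \subset (\bk Q)_{\ge 2}$, this says $g(a) - a \in e_i I e_j$. Conversely, given any collection $\phi = (\phi_{ij}) \in \prod_{i,j} \Hom_\bk(V_{ij}^{\prim}, e_i I e_j)$, the assignment $a \mapsto a + \phi_{ij}(a)$ extends by \pref{pr:AutQ} (using $e_i I e_j \subset V_{ij}^{\nonprim}$) to a unique $g \in \Aut(\bk Q / \bk^{Q_0})$, and the resulting $g$ acts as the identity on arrows modulo $I$, hence on all of $\bk Q / I$ by multiplicativity and the two-sidedness of $I$; in particular $g \in \Stab_I$ and $g$ lies in the kernel. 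This furnishes the required bijection between the kernel and $\prod_{i,j} \Hom_\bk(V_{ij}^{\prim}, e_i I e_j)$.

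\textbf{Expected obstacle.} The only delicate point is that the identification of the kernel with $\prod_{i,j} \Hom_\bk(V_{ij}^{\prim}, e_i I e_j)$ is meant as an isomorphism of \emph{groups}, whereas, as already flagged in the remark following \pref{pr:AutQ}, the natural group law on the right is not pointwise addition of linear maps but the one coming from composition in $\Aut(\bk Q / \bk^{Q_0})$. Consequently the group structure on the left factor of the exact sequence must be understood as transported from its realisation as the kernel subgroup inside $\Aut(\bk Q / \bk^{Q_0})$; once this convention is fixed, normality is automatic because kernels of homomorphisms are normal, and exactness is immediate.
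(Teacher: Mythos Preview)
Your proof is correct and follows essentially the same strategy as the paper: lift the given isomorphism arrow by arrow to obtain $g$, then identify the kernel of the descent map as those $g$ with $g(a)-a\in I$ for every arrow. The only cosmetic differences are that the paper checks $g$ is an automorphism via surjectivity on the length filtration (rather than invoking \pref{pr:AutQ}) and obtains $g(I)=J$ by an implicit dimension count (rather than by applying the argument to $g^{-1}$); your extra remark on the group structure of the kernel is a welcome clarification that the paper leaves implicit.
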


\begin{proof}
Let $f \colon \bk Q/I \to \bk Q/J$ be an isomorphism
of $\bk^{Q_0}$-algebras.
Since the arrows $ a \in Q_0 $ generate $ \bk Q/I $
as $ \bk^{Q_0} $-algebras,
the homomorphism $f$ is determined by $f(a)$. 
%for the arrows $a$ of $Q$, since they generate the path algebra as a $\bk^{Q_0}$-algebra.
One can lift $f$ to a homomorphism
$g\colon \bk Q \to \bk Q$
by choosing a lift $g(a) \in \bk Q$ of $f(a) \in \bk Q/J$
for each $a \in Q_0$,
since the path algebra $\bk Q$ is freely generated
as a $\bk^{Q_0}$-algebra by the arrows.
The morphism $g$ is surjective, and hence an isomorphism,
since it is surjective on the graded quotient of $\bk Q$
with respect to the length of paths.
One has $g(I) \subset J$
since $g$ is a lift of $f : \bk Q/I \to \bk Q/J$.
This implies $g(I) = J$, and (\ref{it:AutI1}) is proved.

The exact sequence in (\ref{it:AutI2}) follows
from the fact that an element
$g \in \Aut \lb \bk Q \big/ \bk^{Q_0} \rb$
satisfying $g(I) = I$
induces the identity map on $\bk Q/I$
if and only if
$
g(a) - a
\in
I
$
holds for any arrow $a \in Q_1$.
\end{proof}

\begin{definition}
A \emph{sheaf of path algebras} of a quiver $Q$
on a scheme $U$ is a locally-free sheaf $\cQ$ of associative
$\cO_U^{Q_0}$-algebras such that there exists an open covering
$
U = \bigcup_{i} U_i
$
and isomorphisms
\begin{equation}
\varphi_i \colon \cQ|_{U_i} \simto \cO_{U_i} Q
\end{equation}
of $\cO_{U_i}^{Q_0}$-algebras.
%such that on the overlaps
%$
%U_i \cap U_j
%$,
%the transformation morphism
%$
%\varphi_i \circ \varphi_j^{-1}
%$
%restricts to an identity on the semi-simple subalgebra
%$
%\cO_{U_i \cap U_j}^{Q_0}
%$.
%with a subbundle $\cQ_0 \subset \cQ$, which itself is a subalgebra,
%such that for any point $x \in U$,
%there is an open neighborhood $V \ni x$
%and an isomorphism $\cQ|_V \simto \cO_V Q$
%of $\cO_V$-algebras
%such that the subalgebra
%$
%\cQ_0|_V \subset \cQ|_V
%$
%is mapped to $\bigoplus_{v \in Q_0} \cO_V e_v$.
%We denote the semi-simple subalgebra
%$
%\cO_U^{Q_0}
%\subset
%\cQ
%$
%by the symbol
%$
%\cQ_0
%$.
\end{definition}

It follows from the definition that
a sheaf of path algebras on $U$ is obtained by
taking an open covering $U = \bigcup_\lambda V_\lambda$
of $U$ and
gluing $\cO_{V_\lambda} Q$ on $V_\lambda$
and $\cO_{V_\mu} Q$ on $V_\mu$
by elements of
$
 \Aut \lb \cO_{V_\lambda \cap V_\nu} Q
  \big/ \cO_{V_\lambda \cap V_\nu}^{Q_0} \rb
$
along $V_\lambda \cap V_\nu$.
Set
$
 G = \Aut \lb \bk Q \big/ \bk^{Q_0} \rb.
$
Then giving a sheaf of path algebras on $U$ is equivalent
to giving a principal $G$-bundle on $U$,
i.e., a morphism $U \to [\Spec \bk/G]$.

The moduli of relations of a quiver is
the category fibered in groupoids
over the category of schemes,
defined as follows:

\begin{definition}
The category $\scrR(Q)$ of relations of a quiver $Q$
is defined as follows:
\begin{itemize}
 \item
An object of $\scrR(Q)$ is a triple $(U, \cQ, \cI)$
consisting of a scheme $U$,
a sheaf $\cQ$ of path algebras of $Q$ on $U$,
and a locally-free subsheaf $\cI$ of two-sided ideals of the sheaf $\cQ$
such that the quotient algebra $\cQ / \cI$
is a flat $\cO_U^{Q_0}$-bimodule.
%with a locally-constant dimension matrix.
 \item
A morphism from $(U, \cQ, \cI)$ to $(U', \cQ', \cI')$
is a morphism
$\varphi \colon U \to U'$ of schemes
and an isomorphism $g \colon \cQ \simto \varphi^* \cQ'$
of $\cO_U^{Q_0}$-algebras
such that $g(\cI) = \varphi^* \cI'$.
\end{itemize}
\end{definition}

Note that flatness of $\cQ/\cI$ as a $\cO_U^{Q_0}$-bimodule
implies that the dimension matrix
$\bdim \cI_x = \bdim \cQ_x - \bdim \cQ_x/\cI_x$
is a locally constant function of $x \in U$.
\begin{comment}
This follows from the fact that
locally on $U$
and for any $v, w \in Q_0$,
by taking the base change
with respect to the morphism
$\cQ_0 \to \cO_U e_v \oplus \cO_U e_w$,
one obtains a sheaf $(e_v + e_w) \cQ/\cI (e_v + e_w)$
of $\cO_U e_v \oplus \cO_U e_v$-algebras,
which is flat since flatness is preserved by base change.
Since flat $\cO_U e_v \oplus \cO_U e_v$-algebras
are flat as $\cO_U$-algebras,
the $\cO_U$-rank of $(e_v + e_w) \cQ/\cI (e_v + e_w)$
is locally constant.
Since $Q$ does not have an oriented cycle
so that either $e_v \bk Q e_w$ or $e_w \bk Q e_v$ is zero-dimensional,
and $\dim e_v \bk Q e_v = \dim e_w \bk Q e_w =1$.
It follows that $\dim e_v \bk Q e_w$ and $\dim e_w \bk Q e_v$
are locally constant.
\end{comment}
The category $\scrR(Q)$ can be decomposed
into the disconnected sum
by the dimension matrix $\bdim I \in \End(\bZ^{Q_0})$;
\begin{align}
 \scrR(Q) = \coprod_{\bst \in \End(\bZ^{Q_0})} \scrR(Q; \bt).
\end{align}

\begin{proposition} \label{pr:moduli_r}
Let $Q$ be a quiver without oriented cycles and
$\bt \in \End(\bZ^{Q_0})$ be a dimension matrix.
Then the category $\scrR(Q; \bt)$ is an algebraic stack
of finite type.
\end{proposition}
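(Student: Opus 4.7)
The plan is to realize $\scrR(Q; \bt)$ as a global quotient stack $[R(Q;\bt)/G]$, where $R(Q;\bt)$ is a finite-type scheme parametrizing two-sided ideals of $\bk Q$ with dimension matrix $\bt$, and $G = \Aut(\bk Q / \bk^{Q_0})$ is the linear algebraic group computed in \pref{pr:AutQ}. Since $Q$ has no oriented cycles, $\bk Q$ is finite-dimensional, so both $G$ and the Grassmannians appearing below are of finite type over $\bk$.

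First I would construct $R(Q;\bt)$ as a closed subscheme of the product of Grassmannians
\begin{align}
 \Gr(\bt, \bk Q) := \prod_{i,j \in Q_0} \Gr(t_{ij}, e_i(\bk Q)e_j),
\end{align}
whose $U$-points are $\cO_U^{Q_0}$-subbimodules $\cI \subset \cO_U \otimes \bk Q$ of dimension matrix $\bt$. The two-sided ideal condition, namely that left and right multiplication by each arrow $a \in Q_1$ sends $\cI$ into $\cI$, is cut out by the vanishing of finitely many morphisms between vector bundles built out of the tautological subbundle $\cS$ and the quotient bundle on $\Gr(\bt, \bk Q)$. Thus $R(Q;\bt)$ is a closed subscheme, hence of finite type. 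Tautologically, the restriction of $\cS$ yields a universal ideal in the trivial algebra bundle $\cO \otimes \bk Q$; since it is a subbundle of constant rank in each $e_i(\bk Q)e_j$-component, the quotient is locally free, providing the flatness required in the definition of $\scrR(Q;\bt)$.

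By \pref{pr:AutQ}, $G$ is an extension of the reductive group $\prod_{i,j} \GL(V_{ij}^\prim)$ by a vector group, hence a linear algebraic group of finite type, and it acts algebraically on $\bk Q$ preserving $R(Q;\bt) \subset \Gr(\bt, \bk Q)$. Following the discussion after the definition of a sheaf of path algebras, giving such a sheaf on $U$ is the same as giving a morphism $U \to BG$. Unwinding the definitions, an object of $\scrR(Q;\bt)$ over $U$ is equivalent to a $G$-torsor $P \to U$ together with a $G$-equivariant morphism $P \to R(Q;\bt)$, which is precisely the data of a morphism $U \to [R(Q;\bt)/G]$. The compatibility of morphisms in the two fibered categories is controlled by \pref{pr:AutI}, which identifies automorphisms of $(\bk Q, I)$ with the stabilizer of $I$ in $G$.

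The main obstacle will be verifying this equivalence of fibered categories rigorously: showing that the local trivializations of a sheaf of path algebras $\cQ$ really do assemble into a principal $G$-bundle in the appropriate topology, and that the locally defined ideals $\cI$ glue into an honest $G$-equivariant morphism to $R(Q;\bt)$; this is essentially a descent argument, routine once the pieces are correctly lined up. Once the isomorphism $\scrR(Q;\bt) \cong [R(Q;\bt)/G]$ is established, the standard fact that the quotient of a finite-type scheme by an algebraic action of a finite-type linear algebraic group is an algebraic stack of finite type completes the proof.
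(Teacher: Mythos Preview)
Your proposal is correct and follows essentially the same approach as the paper: both realize $\scrR(Q;\bt)$ as the quotient stack $[\scrI(\bt)/G]$ where $\scrI(\bt)$ is the closed subscheme of $\prod_{i,j} \Gr(t_{ij}, e_i(\bk Q)e_j)$ parametrizing two-sided ideals and $G = \Aut(\bk Q/\bk^{Q_0})$. The paper carries out the equivalence you anticipate by explicitly constructing quasi-inverse functors $\Phi$ and $\Psi$ between $\scrR(Q;\bt)$ and $[\scrI/G]$, which amounts to precisely the descent argument you flagged as the main point to verify.
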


\begin{proof}
Let
$
 \scrI = \scrI(\bt)
% \subset \Gr_{|\bt|}(\bk Q)
$
be the fine moduli scheme of ideals of the path algebra $\bk Q$
of dimension matrix $\bt$.
It is a closed subscheme of
\begin{equation}\label{eq:relations_simple}
 \prod_{i , j \in Q_0} \Gr (t_{ij} , e_i(\bk Q)e_j )
\end{equation}
cut out by the condition that
the corresponding linear subspace of
$
\bk Q
$
is a two-sided ideal.
Here
$
\Gr ( d , V )
$
denotes the Grassmannian of
$
d
$-dimensional subspaces of
$
V
$.
The group
$
 G = \Aut \lb \bk Q \big/ \bk^{Q_0} \rb
$
of automorphisms of the path algebra $\bk Q$
over the semisimple ring $\bk^{Q_0}$
acts naturally on $\scrI$.
We show that the category $\scrR=\scrR(Q,\bt)$ is equivalent
to the quotient stack
$[\scrI/G]$
by constructing functors $\Phi : \scrR \to [\scrI/G]$
and $\Psi : [\scrI/G] \to \scrR$
which are quasi-inverse to each other.

For an object $(S, \cQ, \cI)$ of $\scrR$,
let $\pi \colon \cP \to S$ be the principal $G$-bundle
associated with $\cQ$,
so that $\pi^* \cQ$ is isomorphic
to the trivial sheaf $\cO_\cP Q$ of path algebras.
Then the pull-back $\pi^* \cI$ of the relation $\cI$
gives an ideal of $\pi^* \cQ \cong \cO_\cP Q$,
so that one obtains a morphism $f \colon \cP \to \scrI$.
This morphism is clearly $G$-equivariant,
and we set $\Phi(S,\cQ,\cI) = f$.
The action of $\Phi$ on morphisms is defined naturally.

Conversely, let
$f \colon \cP \to \scrI$
be an object of $[\scrI/G]$
over a scheme $S$,
so that $\cP$ is a principal $G$-bundle on $S$ and
$f$ is a $G$-equivariant morphism.
Then $G$-equivariance of $f$ ensures
that the pull-back of the universal ideal on $\scrI$
descends to an sheaf $\cI$ of ideals
of the sheaf $\cQ$ of path algebras on $S$
associated with $\cP$,
and we set $\Psi(f) = (S,\cQ,\cI)$.
It is clear that the functor $\Psi$ is quasi-inverse
to the functor $\Phi$,
and \pref{pr:moduli_r} is proved.
\end{proof}

In the simplest case,
the space
$
\scrI ( \bt )
$
of ideals is the whole of \eqref{eq:relations_simple}.
The Beilinson quiver in \pref{fg:P2_quiver},
which is the main subject of this paper,
falls in this class.
As a less trivial example,
consider the case when the quiver comes
from the full strong exceptional collection
$
 (\cO, \cO(f), \cO(s), \cO(s+f))
$
of line bundles on the Hirzebruch surface
$\bF_d=\bP_{\bP^1}(\cO_{\bP^1} \oplus \cO_{\bP^1}(d))$.
Here $s$ and $f$ are the negative section
and a fiber, respectively.
The corresponding quiver is shown in \pref{fg:Fd_quiver},
and the relations will be denoted by $I$.
We label the vertices
and the spaces spanned by arrows between them
as in \pref{fg:Fd_quiver2}.

\begin{figure}[t]
\begin{minipage}{.49 \linewidth}
\begin{align*}
\begin{psmatrix}[rowsep=35mm,colsep=35mm]
 \scO(f) & \scO(s+f) \\
 \scO & \scO(s)
\psset{nodesep=4pt,arrows=->}
\ncline[offset=0pt]{1,1}{1,2}
 \lput*{N}(0.5){a_{d+6}}
\nccurve[angleA=130,angleB=-130,offset=3pt]{2,1}{1,1}
 \lput*{N}(0.5){a_{d+2}}
\nccurve[angleA=95,angleB=-95,offset=0pt]{2,1}{1,1}
 \lput*{N}(0.5){a_{d+3}}
\ncline[offset=0pt]{2,1}{2,2}
 \lput*{N}(0.5){a_{d+1}}
\nccurve[angleA=-70,angleB=160,offset=0pt]{1,1}{2,2}
 \lput*{N}(0.5){a_1}
\ncline[offset=0pt]{1,1}{2,2}
 \lput*{N}(0.5){\reflectbox{$\ddots$}}
\nccurve[angleA=-20,angleB=110,offset=0pt]{1,1}{2,2}
 \lput*{N}(0.5){a_d}
\nccurve[angleA=85,angleB=-85,offset=0pt]{2,2}{1,2}
 \lput*{N}(0.5){a_{d+4}}
\nccurve[angleA=50,angleB=-50,offset=-5pt,nodesep=6pt]{2,2}{1,2}
 \lput*{N}(0.5){a_{d+5}}
\end{psmatrix}
\end{align*}
\caption{The quiver for $\bF_d$}
\label{fg:Fd_quiver}
\end{minipage}
\begin{minipage}{.49 \linewidth}
\begin{align*}
\begin{psmatrix}[rowsep=35mm,colsep=35mm,mnode=circle]
 2 & 4 \\
 1 & 3
\end{psmatrix}
\psset{nodesep=3pt,arrows=->}
\ncline{1,1}{1,2}
 \lput*{N}(0.5){X}
\ncline{2,1}{1,1}
 \lput*{N}{V}
\ncline{2,1}{2,2}
 \lput*{N}{U}
\ncline{2,2}{1,2}
 \lput*{N}{W}
\ncline{1,1}{2,2}
 \lput*{N}{Z}
\end{align*}
\caption{Labeling the vertices and arrows}
\label{fg:Fd_quiver2}
\end{minipage}
\end{figure}

One has
$
 e_3 (\bk Q) e_1 = (Z \otimes V) \oplus U
$
and
$
 e_3 (\bk Q/I) e_1 = H^0(\cO(s)),
$
so that the subspace $I_{31} := e_3 I e_1$
of $(Z \otimes V) \oplus U$ has dimension
\begin{align}
 \dim (Z \otimes V) \oplus U - \dim H^0(\cO(s))
 = (2d+1)-(d+2)
 = d -1
\end{align}
and is spanned by
$$
 a_1a_{d+3}-a_2a_{d+2}, \ 
 a_2a_{d+3}-a_3a_{d+2}, \ 
 \dots, \ 
 a_{d-1}a_{d+3}-a_da_{d+2}.
$$
Similarly, the space
$
 I_{42} := e_4 I e_2
  \subset e_4 (\bk Q) e_2
  = (W \otimes Z) \oplus X
$
has dimension $d-1$ and
is spanned by
$$
 a_{d+5}a_1-a_{d+4}a_2, \ 
 a_{d+5}a_2-a_{d+4}a_3, \ 
 \dots, \ 
 a_{d+5}a_{d-1}-a_{d+4}a_d.
$$
Finally, the subspace $e_4 I e_1$ of
\begin{align}
 Y := e_1 (\bk Q) e_4
  = (X \otimes V) \oplus (W \otimes U) \oplus (W \otimes Z \otimes V)
\end{align}
has dimension $3 d$.
It has a $(3d-2)$-dimensional subspace
which is the image of the map
\begin{equation} \label{eq:contribution_from_shorter_paths}
 \ld e_4 (\bk Q) e_3 \otimes e_3 I e_1 \rd
  \oplus \ld e_4 I e_2 \otimes e_2 (\bk Q) e_1 \rd
  \to e_4 (\bk Q) e_1
\end{equation}
from
\begin{align}
 \ld e_4 (\bk Q) e_3 \otimes e_3 I e_1 \rd
  \oplus \ld e_4 I e_2 \otimes e_2 (\bk Q) e_1 \rd
 = (W  \otimes I_{31}) \oplus (I_{42} \otimes V)
\end{align}
to $e_4 (\bk Q) e_1=Y$.
Hence the space of newly added relations
gives a point in the Grassmannian
$\Gr(2,Y/(W\otimes I_{31}+I_{42}\otimes V))$.
This allows us to describe the moduli stack
$\scrR(Q; \bt)$
of relations of the quiver $Q$ in \pref{fg:Fd_quiver}
with the same dimension matrix $\bt$ as $I$ %the Hirzebruch surface
as the following quotient stack.

Let $B$ be the locally-closed subscheme of
$\Gr(d-1, (Z\otimes V)\oplus U)\times \Gr(d-1, (W\otimes Z)\oplus X)$
consisting of pairs $(I_{31}, I_{42})$
such that the rank of the map
\eqref{eq:contribution_from_shorter_paths} is $3d-2$.
%By an abuse of notation, we used the same symbol
%$(I^1, I^2)$ to denote an arbitrary point.
Let further
$
 \pi
 \colon
 \scrI
 \to
 B
$
be the Grassmannian fibration
whose fiber over a point $(I_{31},I_{42}) \in B$
is $\Gr(2,Y/(W\otimes I_{31}+I_{42}\otimes V))$.
%(the total space $P$ is obtained by using universal bundles of the Grassmannians
%$\Gr(d-1, Z\otimes V\oplus U)$ and $\Gr(d-1, W\otimes Z\oplus X)$ in the obvious way).
The group
\begin{align*}
\begin{split}
 \Aut \lb \bk Q \big/ \bk^{Q_0} \rb
 = (\GL(U) \times \GL(V) \times \GL(W) \times \GL(X) \times \GL(Z)) \qquad\\
  \ltimes (\Hom(U, Z \otimes V) \times \Hom(X, W \otimes Z)) 
\end{split}
\end{align*}
acts naturally on
$
\scrI
$,
and one has
$
 \scrR(Q; \bt) = \ld \scrI / \Aut \lb \bk Q \big/ \bk^{Q_0} \rb \rd.
$

\section{Moduli of algebras}
 \label{sc:moduli_a}

\begin{definition} \label{df:moduli_a}
The category $\scrA$ of finite-dimensional unital associative algebras
is defined as follows:
\begin{itemize}
 \item
An object of $\scrA$ is a pair $(U, A)$
of a scheme $U$
and a sheaf $\cA$ of associative unital $\cO_U$-algebras,
which is locally free of finite rank as a $\cO_U$-module. 
 \item
A morphism from $(U,\cA)$ to $(U',\cA')$ is a pair
$(\varphi, f)$ of
a morphism $\varphi \colon U \to U'$ of schemes and
an isomorphism
$
\varphi \colon A \xto[]{\cong} f^*B
$
of
$
\cO_U
$-algebras.
\end{itemize}
\end{definition}

The category $\scrA$ can be decomposed as the disconnected sum
\begin{align}
 \scrA = \coprod_{r \in \bN} \scrA^r
\end{align}
of the subcategory $\scrA^r$
consisting of objects $(U, \cA)$
such that $\rank_{\cO_U} \cA = r$.

Let $V$ be a vector space of dimension $r$, and
$W \subset \Hom_\bk(V \otimes V, V) \times V$
be the set of pairs $(m, e)$ satisfying
the associativity condition
\begin{align}
 m \circ (m \times \id) = m \circ (\id \times m)
\end{align}
and the unit condition
\begin{align}
 m \circ (\id \times e) = m \circ (e \times \id) = \id.
\end{align}
The group $G = \GL(V)$ acts on $W$ from right
through the action
\begin{align}
 (m,e) \mapsto \lb g^{-1}\circ m\circ (g\otimes g), g^{-1}e \rb.
\end{align}

\begin{proposition}\label{pr:quotient_stack}
There is an equivalence of categories fibered in groupoids over $(Sch/\bk)$
between $\scrA^r$ and the quotient stack $[W/G]$.
\end{proposition}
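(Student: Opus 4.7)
The plan is to construct the equivalence via the standard frame bundle / associated bundle correspondence. Both directions are natural once one notices that $W$ is precisely the parameter space for an associative unital algebra structure on the fixed vector space $V$, and that $G = \GL(V)$ acts by transport of structure.

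First I would build the functor $\Phi \colon \scrA^r \to [W/G]$. Given $(U,\cA) \in \scrA^r$, define the \emph{frame bundle}
\[
 P_\cA = \Isom_{\cO_U}(V \otimes_\bk \cO_U,\ \cA),
\]
which is a right $G$-torsor on $U$ (well-defined since $\cA$ is locally free of rank $r$). On $P_\cA$, the pullback of $\cA$ is canonically trivialized as $V \otimes \cO_{P_\cA}$, so the multiplication and unit of $\cA$ pull back to $\cO_{P_\cA}$-linear maps
\[
 m_{\cA} \colon V \otimes V \otimes \cO_{P_\cA} \to V \otimes \cO_{P_\cA}, \qquad
 e_{\cA} \colon \cO_{P_\cA} \to V \otimes \cO_{P_\cA},
\]
which satisfy associativity and the unit axiom. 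This is exactly the data of a morphism $f_\cA \colon P_\cA \to W$, and by construction it is $G$-equivariant. Thus $(P_\cA, f_\cA)$ determines an object of $[W/G](U)$, and morphisms of algebras induce morphisms of torsors compatible with $f$ in the obvious way.

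Conversely, for the functor $\Psi \colon [W/G] \to \scrA^r$, given a $G$-torsor $\pi \colon P \to U$ together with a $G$-equivariant map $f \colon P \to W$, form the associated bundle $\cA := P \times^G V$, which is locally free of rank $r$ on $U$. The map $f$ encodes an $\cO_P$-linear multiplication and unit on $V \otimes \cO_P$ which is $G$-equivariant by construction, hence descends along $\pi$ to give $\cA$ the structure of a sheaf of associative unital $\cO_U$-algebras. This yields the object $(U,\cA) \in \scrA^r$.

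The remaining task is to verify that $\Phi$ and $\Psi$ are quasi-inverse. In one direction, starting from $(U,\cA)$, the associated bundle $P_\cA \times^G V$ is canonically isomorphic to $\cA$ as an $\cO_U$-algebra because the tautological trivialization over $P_\cA$ is $G$-equivariant by definition. In the other direction, starting from $(P,f)$, the frame bundle of $\cA = P \times^G V$ is canonically isomorphic to $P$, and under this identification $f_\cA$ corresponds to $f$. Both natural isomorphisms are straightforward once the tautological trivializations are tracked carefully. The main (minor) obstacle is bookkeeping the descent: one must verify that the algebra structure on $V \otimes \cO_P$ coming from $f$ is genuinely $G$-equivariant and therefore descends, which follows directly from the way the $G$-action on $W$ was defined (by conjugation of multiplication and translation of the unit). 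No geometric input beyond faithfully flat descent along the torsor $\pi$ is needed.
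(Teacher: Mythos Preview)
Your proof is correct and follows essentially the same approach as the paper: both construct the equivalence via the frame bundle $\Isom(V\otimes\cO_U,\cA)$ in one direction and descent of the universal algebra structure along the torsor in the other. Your write-up is in fact slightly more explicit about the quasi-inverse verification than the paper's own proof.
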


\begin{proof}%[Proof of \pref{pr:quotient_stack}]
We construct functors
$
 F\colon [W/G]\to \scrA^r
$
and
$
 H\colon \scrA^r\to [W/G]
$
which are quasi-inverse to each other.
An object of $[W/G]$ is a triple $(U,P,f)$
consisting of
\begin{itemize}
 \item
 a scheme $U$ over $\bk$,
 \item
 a principal $G$-bundle $\pi\colon P\to U$, and
 \item
 a $G$-equivariant morphism
$
 f\colon P\to W.
$
\end{itemize}
On
$
W
$,
there exists the universal multiplication and the universal identity
\begin{equation}
\begin{split}
m \colon
V \otimes V \otimes \cO_W
\to
V \otimes \cO_W
\\
e \colon
\cO_W \to V \otimes \cO_W.
\end{split}
\end{equation}
Pulling back all these data by
$
f
$
and taking its descent to
$
U
$,
we obtain the corresponding family of associative unital algebras on
$
U
$.

Conversely, to define the functor $H$,
take an object of $\scA$;
it is a pair of a scheme $U$ over $\bk$
and an $\scO_U$-algebra $A$.
Consider the principal $G$-bundle
\begin{align}
 \pi\colon P=\Isom(V\otimes \scO_U, A) \to U,
\end{align}
so that we have the trivialization
\begin{equation}
V \otimes \cO_P
\xto[]{\cong}
\pi^* A.
\end{equation}
Thus we obtain a family of algebra structures on
$
V
$,
and hence the classifying morphism
$
f \colon
P \to W
$.
\end{proof}

\section{A morphism from $\scrR$ to $\scrA$}
 \label{sc:morphism}

Given a quiver and a dimension matrix of relations $(Q, \bt)$,
there is a natural functor
$
F \colon \scrR( Q, \bt ) \to \scrA$
sending an object $(S, \cI , \cQ )$ of $\scrR$
to an object $(S, \cQ / \cI)$ of $\scrA$.
The following proposition shows that
this functor is surjective in an infinitesimal neighborhood
of a point in its image.

\begin{proposition}
Let $(R, \fm)$ be a complete local ring with $\bk = R/\fm$ and
$(Q, I)$ be a quiver with relations.
Then for any object $(\Spec R, A_R)$ of $\scrA$
such that $A_\bk := A_R \otimes_R \bk \cong \bk Q/I$,
there exists an object
$(\Spec R, I_R)$ of $\scrR(Q, \bdim I)$ such that
$A_R \cong R Q/I_R$.
\end{proposition}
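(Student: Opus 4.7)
The plan is to construct an $R^{Q_0}$-algebra surjection $\varphi\colon RQ \twoheadrightarrow A_R$ lifting the given presentation $\bk Q \twoheadrightarrow A_\bk$, and to take $I_R := \ker\varphi$. First I would lift the orthogonal idempotents of $A_\bk$ corresponding to the vertices of $Q$ to orthogonal idempotents $\{e_i\}_{i \in Q_0} \subset A_R$ summing to $1$. This is standard for a finitely generated module over a complete local ring (Hensel / Newton iteration, orthogonalized inductively on $i$), and endows $A_R$ with the structure of an $R^{Q_0}$-algebra together with a decomposition $A_R = \bigoplus_{i,j} e_i A_R e_j$. Next, for each arrow $a \in Q_1$ from $j$ to $i$, I would choose any lift $\tilde a \in e_i A_R e_j$ of its image in $A_\bk$. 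Since $RQ$ is freely generated as an $R^{Q_0}$-algebra by its arrows, this data extends uniquely to an $R^{Q_0}$-algebra homomorphism $\varphi\colon RQ \to A_R$ whose reduction modulo $\fm$ is the original presentation. Nakayama's lemma on the finitely generated $R$-module $A_R$ then shows $\varphi$ is surjective.

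I would then verify that $I_R = \ker\varphi$ defines an object of $\scrR(Q, \bdim I)$. From the short exact sequence $0 \to I_R \to RQ \to A_R \to 0$, flatness of $RQ$ and $A_R$ over $R$ forces flatness of $I_R$; tensoring this sequence with $\bk$ while using flatness of $A_R$ gives $I_R \otimes_R \bk = I$, so the dimension matrix satisfies $\bdim I_R = \bdim I$. Flatness of $A_R$ as an $R^{Q_0}$-bimodule reduces to flatness of its direct summands $e_i A_R e_j$, which is automatic.

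The main remaining and most delicate point is to show $I_R \subset (RQ)_{\geq 2}$, equivalently that $\varphi$ is injective on $(RQ)_{\leq 1} = \bigoplus_i R e_i \oplus \bigoplus_{i,j} V_{ij}^{\prim}\otimes R$. Multiplying by the idempotents $e_i$ on the left and right splits this into two claims: (a) each $R e_i \subset A_R$ is $R$-free of rank one, and (b) for every pair $i\ne j$, the chosen lifts $\{\tilde a\}_{a\colon j\to i}$ are $R$-linearly independent in the flat $R$-module $e_i A_R e_j$. Claim (a) holds because $Q$ has no oriented cycles, so $e_i A_\bk e_i$ is one-dimensional, after which Nakayama promotes this to freeness of $R e_i$. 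For the crucial claim (b) I would invoke the standard local criterion for injectivity: a map of finitely generated modules over a Noetherian local ring with flat target is injective as soon as its reduction modulo the maximal ideal is. Here the reduction is the canonical map $V_{ij}^{\prim} \to A_\bk$, whose injectivity is exactly the content of the hypothesis $I \subset (\bk Q)_{\geq 2}$, which gives $V_{ij}^{\prim} \cap I = 0$.
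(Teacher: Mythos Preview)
Your argument is correct and follows the same strategy as the paper's: lift the $\bk^{Q_0}$-structure and the arrows to obtain a surjection $RQ \twoheadrightarrow A_R$, then take $I_R$ to be the kernel. The only notable difference is that the paper sidesteps your claim~(b) by choosing the lifts $\tilde a$ from the outset to be part of a free $R$-basis of $e_i A_R e_j$ (which is free because it is a direct summand of the free $R$-module $A_R$ over the local ring $R$), so that their $R$-linear independence is immediate and no local criterion is needed; conversely, you verify $I_R \subset (RQ)_{\ge 2}$ explicitly, a point the paper's proof leaves implicit.
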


\begin{proof}
As pointed out in \cite[Remark 3.10]{Seidel_K3},
a deformation $A_R$ of an algebra over $\bk$
is an algebra over $R^{Q_0}$
if the central fiber $A_\bk$ is an algebra over $\bk^{Q_0}$.
The elements $e_i := \bse_i \cdot \bsone \in A_R$ for $i \in Q_0$
are idempotents of $A_R$,
where $\bse_i$ is the $i$-th idempotent of $R^{Q_0}$ and
$\bsone$ is the unit of $A_R$.
The $R$-module $e_i A_R e_j$ for any $i, j \in Q_0$ is free
since it is a direct summand
of the free $R$-module $A_R$.
Choose an $R$-free basis of $e_i A_R e_j$
in such a way that its reduction mod $\fm$ gives a basis
of $(e_i A_\bk e_j)^\prim$
giving the relation $I_\bk$.
Then the morphism
$R Q \to A_R$
defined by this basis is surjective mod $\fm$,
and hence is surjective by Nakayama's lemma.
The kernel $I_R$ of this morphism is free
since it is a kernel of a surjection of free modules.
\end{proof}

\section{Moduli of representations}
 \label{sc:mod_rep}

Given a quiver $Q$ and a dimension vector $d \in \bZ^{Q_0}$,
one can consider the moduli stack
\begin{align} \label{eq:MQd}
 \scrM(Q;d) = \ld
  \prod_{a \in Q_1} \Hom_{\bk}(\bk^{d_{s(a)}}, \bk^{d_{t(a)}})
   \relmiddle/
   \PGL ( Q ; d)
   \rd  
\end{align}
of representations of $Q$
with the dimension vector $d$.
Here
$
\PGL ( Q ; d)
$
is the quotient of the product group
$
\prod_{v \in Q_0} \GL_{d_v}
$
by its small diagonal
$
\bG_m
$.
Given a two-sided ideal $I \subset \bk Q$,
one gets the closed substack
$
\scrM(Q,I;d)
\subset
\scrM ( Q ; d )
$
of representations of the quiver with relations
$
(Q,I)
$.

Let $d \in \bZ^{Q_0}$ be a dimension vector and
$
\theta \in \Hom(\bZ^{Q_0}, \bQ)
$
a stability parameter satisfying
$
\theta ( d ) = 0
$.
A representation $V$ of dimension vector $d$
is
\emph{
$\theta$-semistable
}
in the sense of King \cite{King}
if $\theta(\dim V') \ge 0$
for any subrepresentation $V' \subset V$.
It is \emph{$\theta$-stable}
if strict inequality holds
for any proper subrepresentation.
The open substack of $\scrM$
consisting of stable (resp. semistable)
representations will be denoted by $\scrM_\theta$
(resp. $\scrM_\theta^\ss$).
A stability parameter is \emph{generic}
if semistability implies stability.

Assume that the quiver $Q$ comes from an exceptional collection,
so that the set $Q_0 = \{ 1, 2, \ldots, n \}$
of vertices is totally ordered
and every arrow $a$ satisfies $s(a) < t(a)$.
When the dimension vector is
$\bsone = (1,\ldots,1) \in \bZ^{Q_0}$,
the moduli stack
$\scrM_\theta(Q; \bsone)$
for a generic stability parameter $\theta$
is a toric variety \cite{MR1648634}.
As an example of a generic stability parameter,
one can take $\theta = (-n-1,1,\ldots,1)$.
A representation $V$ is $\theta$-stable for this $\theta$
if and only if $V$ is generated by the subspace
$V_1 = e_1 V $.

\begin{theorem} \label{th:ideal}
The ideal $I \subset \bk Q$ of relations of $Q$
is determined by the substack
$\scrM(Q, I; \bsone)$ of $\scrM(Q; \bsone)$.
\end{theorem}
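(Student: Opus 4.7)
The plan is to identify $I$ with the preimage of the defining ideal of the substack under the natural map from $\bk Q$ to the coordinate ring of the representation space, coming from the universal $\bsone$-representation. Writing $R = \Spec \bk[x_a : a \in Q_1]$ for the representation space, so that $\scrM(Q; \bsone) = [R/\PGL(Q; \bsone)]$, the universal representation gives a $\bk$-algebra homomorphism
\begin{align*}
\rho \colon \bk Q \to M_{|Q_0|}(\bk[x_a]), \qquad e_i \mapsto E_{ii}, \quad a \mapsto x_a \, E_{t(a), s(a)}.
\end{align*}
For $r \in e_j \bk Q e_i$ one has $\rho(r) = f_r \, E_{ji}$ for a unique polynomial $f_r \in \bk[x_a]$. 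The closed substack $\scrM(Q, I; \bsone)$ corresponds to the $\PGL$-invariant closed subscheme $V(J) \subset R$ cut out by the ideal $J = \langle f_r : r \in I \rangle \subset \bk[x_a]$; from the embedded substack one recovers $J$, so it suffices to show that $I = \{r \in \bk Q : f_r \in J\}$.

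The next step is to exploit the $\bZ^{Q_0}$-grading on $\bk[x_a]$ given by $\deg x_a = e_{t(a)} - e_{s(a)}$, under which $J$ is homogeneous since $f_r$ for $r \in e_j I e_i$ lies in degree $e_j - e_i$. I would show that the assignment $r \mapsto f_r$ defines an isomorphism $e_j \bk Q e_i \simto (\bk[x_a])_{e_j - e_i}$: a monomial $\prod x_a^{n_a}$ of degree $e_j - e_i$ corresponds via the flow condition at each vertex to an Eulerian walk from $i$ to $j$ in the subquiver with arrow multiplicities $n_a$. Acyclicity of $Q$ forces each $n_a \in \{0,1\}$ (two uses of $a$ would bound a cycle) and makes the ordering of the arrows unique (any reordering would again produce a cycle). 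Hence paths from $i$ to $j$ biject with degree $(e_j - e_i)$-monomials, yielding both the injectivity and the stated identification.

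The core step is the inclusion $\{r : f_r \in J\} \subset I$. Take a homogeneous $r \in e_j \bk Q e_i$ with $f_r \in J$, and write $f_r = \sum_k m_k f_{s_k}$ with $s_k \in e_{j_k} I e_{i_k}$, $i_k \ne j_k$ (one can restrict to such generators since $I \subset (\bk Q)_{\ge 2}$ and $Q$ is acyclic), and each $m_k$ a monomial of degree $(e_j - e_{j_k}) + (e_{i_k} - e_i)$. The flow of $m_k$ has sources $\{j_k, i\}$ and sinks $\{j, i_k\}$, so in the DAG of $Q$ it decomposes as a pair of arrow-disjoint paths with one of two pairings: $(j_k \to j,\ i \to i_k)$ or $(j_k \to i_k,\ i \to j)$. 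The second pairing is incompatible with acyclicity, because $s_k$ already provides a path $i_k \to j_k$, which together with a path $j_k \to i_k$ would form an oriented cycle. Hence $m_k = f_{\alpha_k} f_{\beta_k}$ for paths $\alpha_k \colon j_k \to j$ and $\beta_k \colon i \to i_k$, and the identity $f_p f_q = f_{pq}$ on composable paths gives $m_k f_{s_k} = f_{\alpha_k s_k \beta_k}$. Summing and invoking two-sidedness of $I$ shows $f_r = f_{r'}$ with $r' = \sum_k \alpha_k s_k \beta_k \in I$, whence $r = r'$ by injectivity.

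The main technical hurdle is the flow-decomposition argument: both the existence of the desired decomposition of each $m_k$ and the exclusion of the opposite pairing rely essentially on acyclicity of $Q$. Degenerate cases, such as $i = i_k$ or $j = j_k$ (forcing $\beta_k = e_i$ or $\alpha_k = e_j$) or other coincidences among $i, j, i_k, j_k$, are handled by the same argument without new difficulty.
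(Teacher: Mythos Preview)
Your proof is correct and follows the same strategy as the paper: define the linear map $\phi \colon (\bk Q)_{>0} \to S = \bk[x_a]$ sending a path to the corresponding monomial (your $r \mapsto f_r$), observe that the closed substack is cut out by the $\bZ^{Q_0}$-homogeneous ideal $J = (\phi(I))$, and recover $I$ as $\phi^{-1}(J)$. The paper states the equality $I = \phi^{-1}(J)$ as something one can check and leaves it there; your flow-decomposition argument, using acyclicity to rule out the bad pairing of sources and sinks in each monomial coefficient, is exactly the verification of that step.
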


\begin{proof}
The homogeneous coordinate ring of the moduli stack
$
 \scrM(Q;\bsone)
  = \ld \bA^{Q_1} \relmiddle/ \bGm^{Q_0} \rd
$
is the polynomial ring
$
 S = \bk[x_a]_{a \in Q_1}
$
graded by the abelian group
$\Hom(\bGm^{Q_0}, \bGm) \cong \bZ^{Q_0}$
as
\begin{align}
 \deg x_a = e_{t(a)} - e_{s(a)}.
\end{align}
Recall that the path algebra $\bk Q$ is graded
by the length of paths.
Consider the natural linear map
\begin{align}
 \phi \colon (\bk Q)_{>0} \to S
\end{align}
which sends the path
$
p = a_n a_{n-1} \cdots a_1
$
to the monomial
$
x_{a_1} \cdots x_{a_n}
$.
This is not a ring homomorphism, but satisfies
\begin{align}
 \phi(a) \cdot \phi(b) = \phi(a \cdot b)
  \quad \text{if $a \cdot b \ne 0 \in \bk Q$}.
\end{align}
The map $\phi$ is injective
since $Q$ does not have an oriented cycle.
With an ideal $I \subset \bk Q$ of relations of $Q$,
one can associate an ideal
$J = (\phi(I))$ of $S$
generated by the image of $I$ by $\phi$.
Note that since we have the decomposition
$
I
=
\oplus_{i , j \in Q_0} e_j I e_i
$,
the ideal
$
J
$
is homogeneous with respect to the
$
\bZ^{Q_0}
$-grading.
It is clear that
$J$ is the defining ideal of the substack
$\scrM(Q,I;\bsone)$ of $\scrM(Q;\bsone)$.
The ideal $J$ is determined by $\scrM(Q,I;\bsone)$
since closed substack of
$\ld \bA^{Q_1} \relmiddle/ \bGm^{Q_0} \rd$
are in one-to-one correspondence
with homogeneous ideals of $S$ with respect to the $\bZ^{Q_0}$-grading.
Finally one can check the equality
\begin{equation}
I
=
\phi^{-1}(J)
\end{equation}
to see that the ideal
$
I
$
is determined by
$
J
$.

%
\begin{comment}
\begin{shaded}
The injective linear map $\phi$
allows one to think of $(\bk Q)_{>0}$
as a linear subspace of $S$, and
one has
\begin{equation}
I
=
\phi^{-1} ( J ),
\end{equation}
so that the correspondence
$
I \mapsto J
$
is injective.
\xc{correspondence of the tangent spaces and surjectivity?}
\end{shaded}
\end{comment}
%
\end{proof}

As an example,
consider the Beilinson quiver for $\bP^2$
shown in \pref{fg:P2_quiver}.
The moduli stack $\cM(Q;\bsone)$ is the quotient
$[\bA^6/(\bGm)^3]$,
and the coarse moduli space $M_\theta(Q; \bsone)$
of $\scrM_\theta(Q; \bsone)$
for $\theta = (-2,1,1)$
is $\bP^2 \times \bP^2$.
The ideal $I$ of relations of $Q$
is a three dimensional subspace of $e_3 (\bk Q) e_1$,
so that the moduli space $C=M_\theta(Q,I;\bsone)$ is 
the complete intersection of three hypersurfaces
of bidegree $(1,1)$.
Let $L_0$ and $L_1$ be the restrictions
of the ample line bundles on the first and the second component
of $\bP^2 \times \bP^2$.
Then the triple $(C, L_0, L_1)$ is the elliptic triple
appearing in the classification
of quadratic AS-regular $\bZ$-algebra of dimension 3
in \pref{th:BP} below.
In this example,
we do not need the information in the unstable locus
to recover the relations of the quiver.

\section{Moduli of non-commutative projective planes}
 \label{sc:nc_P2}
 
%Throughout this section we work over an algebraically closed field $\bk$
%of characteristic different from $3$, unless otherwise stated.

\subsection{Quadratic AS-regular algebras of dimension 3}
 \label{sc:AS-regular}

An associative unital graded algebra
$S = \bigoplus_{n=0}^\infty S_n$
is \emph{connected} if $S_0 = \bk \cdot 1_S$.
The positive part $S_+ = \bigoplus_{n=1}^\infty S_n$
of a connected algebra $S$ is a two-sided ideal,
and the quotient module $\bk = S/S_+$ is a simple object
in the category $\gr S$ of finitely-generated graded right $S$-modules.
A connected algebra
is a \emph{quadratic AS-regular algebra of dimension 3}
\cite{Artin_Schelter,Artin_Tate_Van_den_Bergh}
if the simple module $\bk$ has a projective resolution
of the following form:
\begin{align} \label{eq:AS-regular}
 0 \to S(-3) \to S(-2)^{\oplus 3} \to S(-1)^{\oplus 3} \to S \to \bk \to 0.
\end{align}
The quotient of the abelian category $\gr S$
by the Serre subcategory $\tor S$
consisting of finite-dimensional modules
will be denoted by
\begin{align}
 \pi : \gr S \to  \qgr S := \gr S / \tor S.
\end{align}
A \emph{non-commutative projective plane}
is a $\bk$-linear abelian category
which is equivalent to $\qgr S$
for some $3$-dimensional quadratic regular
algebra $S$. 
The objects $\pi(S(n))$ of $\qgr S$
will be denoted by $\cO(n)$.
One has the following well-known generalization
of a theorem of Beilinson \cite{Beilinson}:

\begin{theorem} \label{th:Beilinson}
%For any noncommutative projective plane,
The sequence $(\scO, \scO(1), \scO(2))$
is a full strong exceptional collection
on any noncommutative projective plane.
\end{theorem}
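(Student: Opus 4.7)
The plan is to mimic Beilinson's classical argument, using the noncommutative analog of the cohomology of twisted structure sheaves that is available for any quadratic AS-regular algebra $S$ of dimension 3. Since the degree shift $(1)$ is an autoequivalence of $\qgr S$, the computation of $\Hom^i(\scO(a), \scO(b))$ reduces to that of $H^i(\scO(n)) := \Hom^i_{\qgr S}(\scO, \scO(n))$ with $n = b - a$. Three standard inputs on noncommutative $\mathbb{P}^2$ are used: (i) the Artin-Zhang noncommutative Serre theorem, giving $H^0(\scO(n)) = S_n$; (ii) the middle-degree vanishing $H^1(\scO(n)) = 0$ for all $n$, which follows from the cohomological dimension of $\qgr S$ being $2$ for AS-regular $S$ of dimension $3$; and (iii) Serre duality for $\qgr S$ with Serre functor $(-) \otimes \scO(-3)[2]$, which gives $H^2(\scO(n)) \cong S_{-n-3}^*$.

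Granted these facts, the exceptional collection axioms become a direct verification. For $a, b \in \{0,1,2\}$ one has $b - a \in \{-2, \ldots, 2\}$ and $a - b - 3 \in \{-5, \ldots, -1\}$. Hence $S_{a-b-3} = 0$ always by connectedness of $S$, while $S_{b-a} = 0$ whenever $a > b$. Combined with (i)--(iii), this yields $\Hom^i(\scO(a), \scO(b)) = 0$ for all $i > 0$ and all $a, b \in \{0,1,2\}$, establishing strong exceptionality, and $\Hom(\scO(a), \scO(b)) = 0$ for $a > b$, completing exceptionality. The endomorphism computation $\End(\scO(i)) \cong S_0 = \bk$ is the remaining axiom.

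For fullness, the AS-regular resolution \eqref{eq:AS-regular} of the simple module $\bk$ descends to an exact sequence
\begin{equation*}
0 \to \scO(-3) \to \scO(-2)^{\oplus 3} \to \scO(-1)^{\oplus 3} \to \scO \to 0
\end{equation*}
in $\qgr S$, since $\pi(\bk) = 0$ as $\bk$ is finite-dimensional and hence torsion. Twisting this sequence by $\scO(n)$ and iterating shows by induction on $|n|$ that every $\scO(n)$ lies in the triangulated subcategory $\mathcal{T} \subset D^b(\qgr S)$ generated by $\scO, \scO(1), \scO(2)$. Combined with the noncommutative Serre theorem, which provides, for every coherent object $M$ of $\qgr S$, a surjection from a finite direct sum of $\scO(n_i)$ with $n_i \ll 0$, and with finite global dimension of $\qgr S$ for AS-regular $S$, an induction on the length of a resolution yields $\mathcal{T} = D^b(\qgr S)$.

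The main technical obstacle is establishing the cohomological inputs (i)--(iii) for a general quadratic AS-regular algebra of dimension 3; the Serre duality assertion in particular is nontrivial in the noncommutative setting and relies on the rigid dualizing complex theory for AS-Gorenstein algebras. Once these ingredients, well-documented in the literature cited in \pref{sc:AS-regular}, are granted, the verification of the exceptional collection axioms and fullness follows the same template as the commutative Beilinson argument.
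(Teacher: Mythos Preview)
Your approach is correct and complete in outline, but it differs from the paper's proof in two respects, and contains one misstated justification.

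\textbf{Comparison with the paper.} The paper's proof is terse and addresses only fullness: it takes the (strong) exceptionality of $(\cO,\cO(1),\cO(2))$ as known, then argues that the triangulated subcategory $\scrT$ they generate is admissible, contains every $\cO(n)$ via the shifted AS-regular resolutions, and has trivial right orthogonal because every nonzero object of $\qgr S$ receives a nonzero map from some $\cO(-n)$. Your proof, by contrast, actually verifies the strong exceptional collection axioms via the cohomology computation $H^i(\cO(n))$, so in that sense you do more. For fullness you take a different final step: rather than the orthogonal-complement argument, you use that every object of $\qgr S$ is the image under $\pi$ of a module with a finite free resolution (global dimension of $S$ is finite), hence admits a finite resolution by sums of $\cO(n)$'s. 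Both routes are standard and valid; the paper's is slightly slicker since admissibility of an exceptional subcategory does the bookkeeping for you, while yours is more constructive and avoids invoking admissibility.

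\textbf{One correction.} Your justification for input (ii), namely that $H^1(\cO(n))=0$ for all $n$, is misattributed. Cohomological dimension $2$ of $\qgr S$ only gives $H^i=0$ for $i>2$; it says nothing about $H^1$. The vanishing $H^1(\cO(n))=0$ is instead a consequence of the AS-Gorenstein condition: one has $\underline{\Ext}^i_S(\bk,S)=0$ for $i\neq 3$, which via local cohomology (Artin--Zhang) translates to $H^i_{\frakm}(S)=0$ for $i\neq 3$, and hence $H^1(\cO(n))=H^2_{\frakm}(S)_n=0$. The conclusion you need is correct, but the reason you gave is not. Similarly, the Serre functor on $\qgr S$ is in general $(-)_\varphi(-3)[2]$ for a graded automorphism $\varphi$ (the Nakayama automorphism), not literally tensoring with $\cO(-3)$; this does not affect your computation because $\cO_\varphi\cong\cO$, but it is worth stating precisely.
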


\begin{proof}
Let $\scrT$ be the full triangulated subcategory $\scrT$ of $D^b \qgr S$
containing $\cO$, $\cO(1)$, and $\cO(2)$.
It is an admissible subcategory since $(\cO, \cO(1), \cO(2))$
is an exceptional collection.
Since the functor $\pi$ is exact,
the exact sequence \eqref{eq:AS-regular}
induces the exact sequences
\begin{align}
 0 \to \scO(i) \to \scO(i+1)^{\oplus 3} \to \scO(i+2)^{\oplus 3} \to \scO(i+3) \to 0
\end{align}
for $i \in \bZ$.
It follows that $\scrT$ contains $\cO(n)$ for all $n \in \bZ$.
Note that for any non-zero object $\cM$ of $\qgr S$,
one has $\Hom(\cO(-n), \cM) \ne 0$
for sufficiently large $n$.
This implies that the right orthogonal $\scrT^\bot$ vanishes,
%Since $\scrT$ is an admissible subcategory,
%it is equivalent to the whole derived category,
and \pref{th:Beilinson} is proved.
\end{proof}

Quadratic AS-regular algebras of dimension 3 are classified
in \cite{Artin_Tate_Van_den_Bergh}
in terms of triples $(E, \sigma, L)$,
where $E$ is either $\bP^2$ or a cubic divisor in $\bP^2$,
$\sigma$ is an automorphism of $E$ as a scheme, and
$L=\cO_{\bP^2}(1)|_E$ is a line bundle on $E$.
The AS-regular algebra
associated with a triple $(E,\sigma,L)$ is the quotient
\begin{align}
 S(E,\sigma,L)=T S_1 / \la R \ra
\end{align}
of the free tensor algebra $T S_1$ over
\begin{align}
 S_1
 =H^0(E, L)
 \cong H^0(\bP^2, \scO_{\bP^2}(1))
\end{align}
by the two-sided ideal generated by the 3-dimensional subspace
\begin{align}
 R
  &= \Ker \lb S_1\otimes S_1\to H^0(E, L \otimes \sigma^* L) \colon
   s \otimes s' \mapsto s \cdot \sigma^* s' \rb \\
  &\cong H^0( E\times E, L \boxtimes L)(-\Gamma_{\sigma})).
\end{align}
\begin{comment}
\begin{shaded}
This is formerly written as
\begin{align}
 R
  = \ker \lb S_1\otimes S_1\to H^0(E, \scO_E(2)) \rb
  = H^0(E\times E, \scO(1)\boxtimes\scO(1)(-\Gamma_{\sigma})).
\end{align}
Is the new version correct?
\end{shaded}
\end{comment}
Here $\Gamma_\sigma \subset E \times E \subset \bP^2 \times \bP^2$
is the graph of the automorphism $\sigma : E \to E$.
When $E=\bP^2$,
the resulting noncommutative projective plane
$\qgr S(E,\sigma,L)$ is equivalent to the commutative projective plane
$\coh \bP^2$
as an abelian category.
If $E$ is an elliptic curve and $\sigma$ is a translation,
then $S(E,\sigma,L)$ is called a \emph{Sklyanin algebra}.
%Note that one can say whether an automorphism of $E$
%is a translation or not, without specifying the group structure on $E$.
A Sklyanin algebra is generated by three elements
with three quadratic relations;
\begin{align} \label{eq:Sklyanin3}
 S(a,b,c) &= \bk \la x, y, z \ra / (f_1, f_2, f_3), \\
\begin{split}
 f_1 &= a y z + b z y + c x^2, \\
 f_2 &= a z x + b x z + c y^2, \\
 f_3 &= a x y + b y x + c z^2.
\end{split}
\end{align}
A triple $(a:b:c) \in \bP^2$ fails to give a Sklyanin algebra
if and only if $(a:b:c)$ belongs to
\begin{align}
 \Delta = \lc (a:b:c) \in \bP^2 \relmid a^3=b^3=c^3 \rc
  \cup \{(1:0:0)\} \cup \{(0:1:0)\} \cup \{(0:0:1)\}.
\end{align}
Given a point $(a:b:c)\in\bP^2\setminus\Delta$,
one has
\begin{align}
 S(a,b,c)=S(E,\sigma,L) 
\end{align}
where
$E$ is the Hesse cubic
\begin{align}
 E = \{ (x:y:z) \in \bP^2 \mid
  ( a^3 + b^3 + c^3 ) x y z - a b c ( x^3 + y^3 + z^3 ) = 0 \},
\end{align}
$\sigma : E \to E$ is the translation
by the point $(a:b:c) \in E$
with respect to the origin $(1:-1:0)$,
and $L = \cO_{\bP^2}(1)|_E$.
The graph 
$
 \Gamma(\sigma) \subset
\bP^2_{x_0 y_0 z_0}
\times
\bP^2_{x_1 y_1 z_1}
$
of $\sigma$ is defined by
\begin{align} \label{eq:graph}
\lc
\begin{aligned}
 \ftilde_1 &= a y_0 z_1 + b z_0 y_1 + c x_0 x_1, \\
 \ftilde_2 &= a z_0 x_1 + b x_0 z_1 + c y_0 y_1, \\
 \ftilde_3 &= a x_0 y_1 + b y_0 x_1 + c z_0 z_1.
\end{aligned}
\right.
\end{align}

\subsection{Quadratic AS-regular $\bZ$-algebras of dimension 3}
 \label{sc:ASZ}

To study noncommutative geometry,
it is sometimes useful to work with
$\bZ$-algebras instead of $\bZ$-graded algebras.

\begin{definition}[\cite{MR1230966,MR2836401}]
\ \\[-6mm]
\begin{itemize}
 \item
A \emph{$\bZ$-algebra} is a pre-additive category
whose objects are indexed by $\bZ$.
A $\bZ$-algebra can be viewed as a ring
$
 A=\bigoplus_{i,j\in\bZ}A_{ij}
$
where
$
A_{ij}
=
\Hom( j , i )
$
and the multiplication is defined
by the composition of the category.
The identity elements $e_i \in A_{ii}$ are called
\emph{local units}.
 \item
A \emph{right $A$-module} is a $\bZ$-graded vector space
$M=\bigoplus_{i\in\bZ}M_i$
with a right action of $A$
satisfying $M_i A_{jk} = 0$ for $i \ne j$ and
$M_i A_{ik} \subset M_k$ for any $i,j,k\in\bZ$.
The category of right $A$-modules will be denoted by $\Gr A$.
 \item
A $\bZ$-algebra $A = \bigoplus_{i,j \in \bZ} A_{ij}$ is \emph{connected}
if $A_{ij}=0$ for $i > j$,
$\dim A_{ij}<\infty$ for $i \le j$,
and
$A_{ii} = \bk e_i$.
The projective module $e_i A$ and the simple module
$e_i A e_i$ will be denoted by $P_i$ and $S_i$.
 \item
A \emph{quadratic AS-regular $\bZ$-algebra of dimension 3}
is a connected $\bZ$-algebra such that
the minimal resolution of $S_i$ has the form
\begin{equation}
0\to P_{i+3}\to P_{i+2}^3 \to P_{i+1}^3 \to P_i\to S_i\to 0.
\end{equation}
 \item
A right $A$-module is a \emph{torsion module}
if it is a colimit of objects $M$ satisfying $M_i = 0$
for $i \gg 0$.
The full subcategory of $\Gr A$
consisting of torsion modules
is denoted by $\Tor A$.
It is a Serre subcategory of $\Gr A$,
and the quotient abelian category is denoted
by $\Qgr A = \Gr A / \Tor A$.
 \item
A $\bZ$-algebra $A$ is \emph{Noetherian}
if the category $\Gr A$ is a locally Noetherian Grothendieck category.
The full subcategory of $\Gr A$ and $\Qgr A$
consisting of Noetherian objects is denoted
by $\gr A$ and $\qgr A$ respectively.
\end{itemize}
\end{definition}

One advantage of using $\bZ$-algebras
instead of $\bZ$-graded algebras is the following theorem,
which is proven in Appendix.

\begin{theorem}[{\cite[Theorem 11.2.3 and Corollary 11.2.4]{MR1816070}}]
 \label{th:SVdB}
For any noncommutative projective plane $\qgr S$,
there is a unique quadratic AS-regular $\bZ$-algebra $A$
such that $\qgr S \cong \qgr A$.
\end{theorem}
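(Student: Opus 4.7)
The plan is to build $A$ directly from the exceptional collection $(\cO, \cO(1), \cO(2))$ furnished by \pref{th:Beilinson} and its shifts, and then establish uniqueness via the rigidity of this helix in $\qgr S$.

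For the construction, set $A_{ij} = \Hom_{\qgr S}(\cO(-j),\cO(-i))$ for $i \le j$ and $A_{ij} = 0$ for $i > j$, with composition in $\qgr S$ as multiplication and local units $e_i = \id_{\cO(-i)}$. Apply $\pi$ to the twist by $-i$ of the AS-regular resolution \eqref{eq:AS-regular} to obtain the exact sequence
$$0 \to \cO(-i-3) \to \cO(-i-2)^{\oplus 3} \to \cO(-i-1)^{\oplus 3} \to \cO(-i) \to 0$$
in $\qgr S$. Since the higher Ext-groups among the $\cO(-n)$'s vanish (strong exceptionality, extended to the whole helix via this very Beilinson-type resolution), the section functor $\Gamma := \bigoplus_j \Hom_{\qgr S}(\cO(-j),-) \colon \qgr S \to \Gr A$ is exact on it, and the resulting sequence of $A$-modules gives exactly the required projective resolution
$$0 \to P_{i+3} \to P_{i+2}^{\oplus 3} \to P_{i+1}^{\oplus 3} \to P_i \to S_i \to 0$$
of the simple $S_i = e_i A e_i$ (the cokernel at $P_i$ being $S_i$ by direct inspection of the grading). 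This confirms that $A$ is quadratic AS-regular of dimension 3. The equivalence $\qgr S \simeq \qgr A$ then follows from a Serre-type theorem for $\bZ$-algebras, the required ampleness-type property of $\{\cO(-n)\}_{n \in \bZ}$ being inherited from that of $\{S(-n)\}_{n \in \bZ}$ in $\gr S$.

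For uniqueness, suppose $A'$ is another such $\bZ$-algebra with an equivalence $\Phi \colon \qgr A' \simto \qgr S$. The objects $L_n := \Phi(\pi P'_n) \in \qgr S$ form a bi-infinite sequence of exceptional objects whose consecutive triples are full strong exceptional collections, tied together by Koszul resolutions of Beilinson type. The main obstacle, and the core of the argument, is a rigidity statement: any such helix in $\qgr S$ must coincide, up to a global reindexing $n \mapsto n+k$, with $\{\cO(-n)\}_{n \in \bZ}$. This rigidity is precisely the content of \cite[Theorem 11.2.3]{MR1816070}; granted it, the induced bijection of morphism spaces assembles into the desired $\bZ$-algebra isomorphism $A' \cong A$.
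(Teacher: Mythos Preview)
Your uniqueness argument is circular. You reduce everything to a ``rigidity statement'' for helices and then invoke \cite[Theorem 11.2.3]{MR1816070} for it --- but that is exactly the result you are being asked to prove, and the paper explicitly notes that no proof of it appears in the literature. So the citation is empty, and the substantive content of the theorem is left untouched. Your existence argument is fine in spirit (though the paper short-circuits it: $A = \check{S}$ tautologically satisfies $\qgr A \cong \qgr S$, and AS-regularity of $A$ is immediate from that of $S$).

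The paper's actual proof of rigidity proceeds by giving an \emph{intrinsic} characterization of the set $\{\cO(m)\}_{m\in\bZ}$ in purely categorical terms. One filters $\cC = \qgr S$ by Serre subcategories $\cC_0 \subset \cC_1 \subset \cC_2$ (finite-length objects, then objects of finite length modulo those, etc.), and proves two comparison results: this categorical Krull dimension agrees with GK-dimension, and length in $\cC/\cC_1$ agrees with numerical rank. Both require genuine work, using the central element $g \in S_3$ and the embedded curve $E$. The payoff is that $\{\cO(m)\}$ is precisely the set of objects $\cI$ with $\Hom(\cN,\cI)=0$ for all $\cN \in \cC_1$, length one in $\cC/\cC_1$, and $\chi(\cI,\cI)=1$ --- conditions manifestly preserved by any abelian equivalence. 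The ordering is then recovered from $\Hom$-vanishing, and the $\bZ$-algebra $\bigoplus_{m,n}\Hom(\cO(m),\cO(n))$ is determined up to shift; one-periodicity makes the shift ambiguity irrelevant. This intrinsic characterization is exactly the missing ingredient in your proposal.
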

This allows one to classify noncommutative projective plane
up to equivalence of abelian categories
in terms of quadratic AS-regular $\bZ$-algebras
up to isomorphisms.
This is in contrast with the existence
of non-isomorphic quadratic AS-regular algebras
$S$ and $S'$ such that $\qgr S \cong \qgr S'$.
A quadratic AS-regular $\bZ$-algebra $A$ of dimension 3
is \emph{linear} if $\qgr A \cong \coh \bP^2$,
and \emph{elliptic} otherwise.

\begin{definition} \label{df:admissible}
A triple $(C, L_0, L_1)$ consisting of
a scheme $C$ and two line bundles $L_0$ and $L_1$ on $C$
is \emph{admissible}
if
\begin{enumerate}[(i)]
 \item \label{it:elliptic1}
for both $i=0$ and $i=1$,
the complete linear system associated with $L_i$
embeds $C$ as a divisor of degree $3$ in $\bP^2$,
% \item \label{it:elliptic1-2}
%the complete linear system associated to $L_1$
%embeds $C$ as a divisor of degree $3$ in $\bP^2$,
 \item \label{it:elliptic2}
$\deg(L_0|_D)=\deg(L_1|_D)$ holds
for any irreducible component $D$ of $C$, and
 \item \label{it:elliptic3}
$L_0$ and $L_1$ are not isomorphic.
\end{enumerate}
Additionally, the \emph{linear triple} is defined by
$
 (C, L_0, L_1)
  = (\bP^2, \cO_{\bP^2}(1), \cO_{\bP^2}(1)).
$
Two triples $(C,L_0,L_1)$ and $(C',L_0',L_1')$ are \emph{isomorphic}
if there is an isomorphism $\varphi : C \to C'$
of schemes such that $\varphi^* L_0' \cong L_0$ and
$\varphi^* L_1' \cong L_1$.
\end{definition}

\begin{theorem}[{\cite{MR1230966},
\cite[Proposition 3.3]{MR2836401}}]
 \label{th:BP}
Isomorphism classes
of elliptic quadratic AS-regular $\bZ$-algebras of dimension 3
are in one-to-one correspondence
with isomorphism classes of admissible triples.
\end{theorem}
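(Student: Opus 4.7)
\medskip

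The plan is to construct quasi-inverse maps between the two sets of isomorphism classes by means of point modules on one side and a twisted tensor-algebra construction on the other, mimicking the Artin--Tate--Van den Bergh proof in the $\bZ$-graded case but keeping track of the extra flexibility afforded by working with a $\bZ$-algebra.

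First, starting from an elliptic quadratic AS-regular $\bZ$-algebra $A$ of dimension $3$, I would associate a triple as follows. For each $i \in \bZ$, set $V_i = A_{i,i+1}$ and let $R_i \subset V_i \otimes V_{i+1}$ be the space of quadratic relations, which is $3$-dimensional by the AS-regular resolution. The vanishing of $R_i$ cuts out a subscheme $\Gamma_i$ in $\bP(V_i^*) \times \bP(V_{i+1}^*)$; in the elliptic case a detailed rank analysis of the multiplication maps shows that $\Gamma_i$ projects isomorphically onto a cubic divisor $C_i \subset \bP(V_i^*)$ and onto a cubic $C_{i+1}' \subset \bP(V_{i+1}^*)$, producing canonical isomorphisms $C_i \simto \Gamma_i \simto C_{i+1}'$. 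Compatibility of the resolution across degrees forces $C_{i+1}' = C_{i+1}$, so all the $C_i$ are canonically identified with a single scheme $C$. Setting $L_i = \cO_{\bP(V_i^*)}(1)|_{C_i}$ and transporting via the canonical isomorphisms, one obtains line bundles $L_0, L_1$ on $C$, each of degree $3$ on every component by construction; non-isomorphism of $L_0$ and $L_1$ is precisely what distinguishes the elliptic case from the linear one, yielding the admissible triple.

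Conversely, starting from an admissible triple $(C, L_0, L_1)$, I would use Definition~\ref{df:admissible}(\ref{it:elliptic1}) to embed $C$ twice into $\bP^2$ via $L_0$ and $L_1$; condition (\ref{it:elliptic2}) together with a dimension count ensures that the image in $\bP(H^0(L_0)^*)\times \bP(H^0(L_1)^*)$ is the complete intersection of three bidegree $(1,1)$ hypersurfaces and is the graph of a uniquely determined automorphism $\sigma\colon C \to C$ (as recalled in the introduction). Using $\sigma$ to propagate the line bundles by $L_{i+1} = \sigma^* L_i$, I would define
\begin{align}
A_{ij} = H^0\bigl(C, L_i \otimes \sigma^* L_{i+1} \otimes \cdots \otimes (\sigma^{j-i-1})^* L_{j-1}\bigr)
\end{align}
for $i < j$, $A_{ii} = \bk$, and $A_{ij} = 0$ for $i > j$, with multiplication induced by tensor product of sections. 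The relation space $R_i \subset V_i \otimes V_{i+1}$ is then defined as the kernel of the surjection onto the twisted global sections, and one checks that the resulting $\bZ$-algebra admits a Koszul-type resolution of every $S_i$ matching the AS-regular shape by the theta-like divisor computations.

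The hard part of the argument is verifying that the two constructions are mutually inverse, which amounts to two non-trivial checks: first, that the point modules of the $\bZ$-algebra built from $(C,L_0,L_1)$ really are parametrized by $C$ with the expected line bundles (so one recovers the original triple), and second, that the multiplication in an AS-regular $\bZ$-algebra is determined by the geometric data of the point modules up to canonical isomorphism (so one recovers the original algebra up to a rescaling that is absorbed by the local units). Both rely on the central point-module technique of \cite{Artin_Tate_Van_den_Bergh}, adapted to the $\bZ$-algebra setting as carried out in \cite{MR1230966, MR2836401}: one must track that the truncations of point modules match the sections of $L_i$ and that the associated $\sigma$ constructed on each side coincides. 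Once these matchings are in place, functoriality of the constructions in isomorphisms is immediate, yielding the bijection on isomorphism classes.
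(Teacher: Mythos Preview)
The paper does not actually prove \pref{th:BP}; it is quoted from \cite{MR1230966,MR2836401}, and the only thing the paper does is record, after \pref{th:trinity_ncp2}, how the correspondence is \emph{constructed} (triple $\leadsto$ helix $\leadsto$ quadratic relations $\leadsto$ $\bZ$-algebra, and conversely via the $(1,1)$-complete-intersection curve). Your outline is in the same spirit, but it contains a genuine error in the inverse construction.

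The formula you write,
\[
A_{ij} \;=\; H^0\!\bigl(C,\; L_i \otimes \sigma^* L_{i+1} \otimes \cdots \otimes (\sigma^{j-i-1})^* L_{j-1}\bigr),
\]
does not define the AS-regular $\bZ$-algebra $A$: it defines the \emph{twisted homogeneous coordinate $\bZ$-algebra} $B$ of the curve $C$. That algebra has the wrong size (its Hilbert function grows quadratically, not like $(1-t)^{-3}$), the simple modules $S_i$ have projective dimension $2$ rather than $3$, and $\qgr B \cong \coh C$ rather than a noncommutative $\bP^2$. In both \cite{Artin_Tate_Van_den_Bergh} and the $\bZ$-algebra version in \cite{MR1230966,MR2836401}, the AS-regular algebra is obtained \emph{not} as a section ring but as the quotient of the free $\bZ$-algebra on the $V_i = H^0(C,L_i)$ by the quadratic relations
\[
R_i \;=\; \ker\bigl(V_i \otimes V_{i+1} \to H^0(C, L_i \otimes L_{i+1})\bigr),
\]
exactly as the paper records after \pref{th:trinity_ncp2}. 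The section ring $B$ then appears only as a further quotient $A \twoheadrightarrow B$ by the (degree-$3$) ``normalizing'' elements, and the delicate point of the proof---which you would still have to supply---is that this quotient $A$ of the tensor algebra really is AS-regular, i.e.\ that the Koszul-type complex built from the $R_i$ is exact. Your proposal acknowledges this as ``the hard part'' but the algebra you have written down is not the one for which that verification is to be made.

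A smaller point: propagating the line bundles by $L_{i+1}=\sigma^*L_i$ is the graded-algebra picture; in the $\bZ$-algebra setting used here one works directly with the helix $L_n = L_0 \otimes (L_1 \otimes L_0^{-1})^{\otimes n}$ and never needs to choose a $\sigma$, which is precisely the extra flexibility the $\bZ$-algebra formalism buys.
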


\begin{definition}[{\cite[p.245, Definition]{MR1230966}}]
A \emph{geometric quadruple} $(V_0, V_1, V_2, W)$ consists
of $3$-dimensional vector spaces $V_0, V_1, V_2$ and
a one-dimensional subspace
$
 W\subset V_0\otimes V_1\otimes V_2,
$
spanned by an element
$
 w \in V_0\otimes V_1\otimes V_2
$
satisfying the following \emph{geometricity} condition;
for any $i = 0,1,2$ and any element $0\not=v^*_i\in V^*_i$,
the contraction
\begin{align}
 w\rfloor v_i^*\in V_{i+1}\otimes V_{i+2}
\end{align}
of $w$ by $v_i$ has rank at least two.
Here the rank of an element
$
\lambda
\in
V_{i+1} \otimes V_{i+2}
$
is defined to be that of the linear map
\begin{equation}
V_{i+1}^* \to V_{i+2} \ ; \ v_{i+1}^* \mapsto \lambda \rfloor v_{i+1}^*,
\end{equation}
and the indices $i$, $i+1$, and $i+2$ are taken modulo $3$.
An \emph{isomorphism} from a geometric quadruple
$(V_0, V_1, V_2, W)$
to another geometric quadruple
$(V'_0, V'_1, V'_2, W')$
is a triple $(\varphi_1, \varphi_2, \varphi_3)$
of isomorphisms
$\varphi_i \colon V_i \to V'_i$
satisfying
$
 (\varphi_0\otimes\varphi_1\otimes\varphi_2)(W)=W'.
$
\end{definition}

\begin{theorem}[{\cite{MR1230966,MR2836401}}] \label{th:trinity_ncp2}
There is a one-to-one correspondence
between the set of isomorphism classes of
\begin{itemize}
 \item
quadratic regular $\bZ$-algebras of dimension 3,
 \item
linear or admissible triples, and
 \item
geometric quadruples.
\end{itemize}
\end{theorem}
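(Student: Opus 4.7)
Given \pref{th:BP}, which establishes the correspondence between quadratic AS-regular $\bZ$-algebras of dimension 3 and admissible (or linear) triples, the remaining task is to build a bijection between geometric quadruples and one of the other two classes. I would construct the bijection between geometric quadruples and triples, since both sides live naturally in ``$\bP^2 \times \bP^2$'' geometry.

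In the direction from a geometric quadruple $(V_0, V_1, V_2, W)$ to a triple, first pick a generator $w\in W$. Contracting $w$ along $V_2^*$ yields a linear map $V_2^* \to V_0 \otimes V_1$; its image is a $3$-dimensional subspace of $V_0 \otimes V_1$, which I view as three bidegree-$(1,1)$ hypersurfaces in $\bP(V_0^*) \times \bP(V_1^*)$, analogous to \eqref{eq:graph}. Their common zero locus $E$ is either all of the diagonal $\bP^2$ (the linear case) or a $(3,3)$-divisor that projects isomorphically to cubic curves in each factor (the admissible case); the geometricity hypothesis, which demands that every contraction $w \rfloor v_i^*$ has rank at least two, is exactly what forces $E$ to have the expected dimension and forces the two projections to be isomorphisms onto their images (yielding admissibility (i)). Admissibility (ii) comes from the degree being $3$ on each irreducible component by symmetry of the construction, and (iii) reflects the fact that $\sigma$, the automorphism whose graph is $E \hookrightarrow \bP(V_0^*) \times \bP(V_1^*)$, is nontrivial precisely when $L_0 \not\cong L_1$. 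Setting $L_i := \cO_{\bP(V_i^*)}(1)|_E$ gives the triple. The same construction with contractions along $V_0^*$ or $V_1^*$ produces equivalent data up to cyclic relabelling.

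In the reverse direction, for an admissible triple $(E,L_0,L_1)$ I would set $V_i := H^0(E,L_i)^*$ and use admissibility (i) to embed $E$ as a complete intersection of three bidegree-$(1,1)$ hypersurfaces in $\bP(V_0^*) \times \bP(V_1^*)$, giving a $3$-dimensional subspace $R \subset V_0 \otimes V_1$. To recover the third vector space, define $V_2 := R^*$ (equivalently, the next piece of the associated $\bZ$-algebra); then $R$ itself, viewed as a subspace of $V_0 \otimes V_1 \otimes V_2$ via the identification $R = \Hom(V_2^*,V_0 \otimes V_1)^* $\dots, picks out a canonical one-dimensional $W$, the image of the evaluation pairing. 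Geometricity of $(V_0,V_1,V_2,W)$ is equivalent to the admissibility of $(E, L_0, L_1)$, by reversing the argument in the previous paragraph.

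The main obstacle, in my view, is to verify that the apparently asymmetric construction in the reverse direction actually produces a cyclically symmetric potential $W \subset V_0 \otimes V_1 \otimes V_2$; that is, one must check that contracting $w$ along $V_0^*$ or $V_1^*$ produces compatible data yielding the same triple up to the cyclic $\bZ/3$ action. This cyclicity is essentially the Calabi--Yau (or ``superpotential'') structure on the associated cubic AS-regular $\bZ$-algebra, and a clean justification would invoke the Koszul duality and $3$-Calabi--Yau properties established in \cite{MR2836401}, rather than attempting a direct combinatorial check.
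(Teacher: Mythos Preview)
Your sketch is sound and close in spirit to the paper's treatment, but the route from triples to quadruples differs in a way worth noting. The paper does not attempt a direct triple-to-quadruple construction; instead it factors through the $\bZ$-algebra: from $(C,L_0,L_1)$ one builds the helix $(L_n)$ and the $\bZ$-algebra $A$ with $V_i=A_{i,i+1}=H^0(C,L_i)$ and relations $R_i=\ker(V_i\otimes V_{i+1}\to H^0(L_i\otimes L_{i+1}))$, then sets
\[
W=(R_0\otimes V_2)\cap(V_0\otimes R_1)\subset V_0\otimes V_1\otimes V_2.
\]
This formula makes the cyclic symmetry you worry about essentially automatic: $W$ is defined symmetrically in terms of two consecutive relation spaces, and the AS-regular resolution forces $(R_0\otimes V_2)\cap(V_0\otimes R_1)$ to be one-dimensional, so no separate Calabi--Yau argument is needed. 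Your choice $V_2:=R^*$ is consistent with this (via the duality built into the minimal resolution), but recovering the potential $W$ from $R$ alone, as you propose, is exactly where the bookkeeping becomes awkward; routing through the $\bZ$-algebra is cleaner. For the quadruple-to-triple direction, the paper contracts along $V_0^*$ and lands in $\bP V_1\times\bP V_2$ rather than along $V_2^*$ as you do, but this is only a cyclic relabelling.
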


The one-to-one correspondence
in \pref{th:trinity_ncp2}
is given as follows:
For a triple $(C, L_0, L_1)$, define the \emph{helix} $(L_n)_{n\in \bN}$ by
\begin{align}
L_n=L_0\otimes (L_1\otimes L_0^{-1})^{\otimes n}.
\end{align}
We put $V_i=H^0(C, L_i)$ and
\begin{align}
R_i= \ker (V_i\otimes V_{i+1}\to H^0(C, L_i\otimes L_{i+1})).
\end{align}
Then the $\bZ$-algebra
$
 A=\bigoplus_{i\le j}A_{ij}
$ associated to the triple $(C, L_0, L_1)$
is defined as the quotient of the free $\bZ$-algebra generated by $V_i=A_{i,i+1}$
by the ideal generated by the relations
$
 R_i\subset V_i\otimes V_{i+1}=A_{i,i+2}.
$

For a quadratic AS-regular $\bZ$-algebra $A$ of dimension 3,
the corresponding geometric quadruple $(V_0, V_1, V_2, W)$
is defined by
\begin{align}
 V_i=A_{i, i+1}
\end{align}
for $i=0,1,2$ and
\begin{align}
 W=(R_0\otimes V_2)\cap (V_0\otimes R_1),
\end{align}
where
\begin{align}
 R_i=\ker (V_i\otimes V_{i+1}\to A_{i, i+2}).
\end{align}
For a geometric quadruple $(V_0, V_1, V_2, W)$,
the corresponding triple is defined as follows.
Take a basis $w$ of $W$ and regard it as a map
\begin{align}
 \varphi\colon V_0^*\to V_1\otimes V_2.
\end{align}
Let $C$ be the closed subscheme of
$
 \bP_{\bullet} V_1\times \bP_{\bullet}V_2\cong \bP^2\times\bP^2
$
cut out by the three independent hypersurfaces of bidegree $(1,1)$
defined by the image of $\varphi$.
Then we define the line bundles $L_0, L_1$ to be the pull-backs of the $\scO_{\bP^2}(1)$ by the
first and the second projections.

For a quadratic AS-regular algebra $S(E, \sigma, L)$
of dimension 3
coming from an elliptic curve $E$,
an automorphism $\sigma : E \to E$,
and a line bundle $L$,
the quadratic AS-regular $\bZ$-algebra of dimension 3
associated with $\qgr S(E,\sigma, L)$
comes from the elliptic triple
$(E, L, \sigma^*L)$
(cf. \cite[Section 3.2]{MR2836401}).
%It follows that two quadratic AS-regular algebras
%$S(E,\sigma,L)$ and $S(E',\sigma',L')$
%such that $\sigma$ and $\sigma'$ are translations
%give equivalent noncommutative projective plane
%if and only if $\sigma$ and $\sigma'$ are related
%by a 3-torsion translation.
%Note that there are $3^2=9$ such translations.

\subsection{Compact moduli of relations of the Beilinson quiver}
 \label{sc:Mrel}

The total morphism algebra
$\bigoplus_{i,j=0}^2 \Hom(\cO(i), \cO(j))$
of the Beilinson collection
$(\cO,\cO(1),\cO(2))$
on a noncommutative projective plane
is described by the Beilinson quiver
%\begin{align*}
%\begin{psmatrix}
% 1 & 2 & 3
%\end{psmatrix}
%\psset{nodesep=3pt,arrows=->}
%\ncline[offset=-3pt]{1,1}{1,2}
%\ncline[offset=0pt]{1,1}{1,2}
%\ncline[offset=3pt]{1,1}{1,2}
%\ncline[offset=-3pt]{1,2}{1,3}
%\ncline[offset=0pt]{1,2}{1,3}
%\ncline[offset=3pt]{1,2}{1,3}
%\end{align*}
in \pref{fg:P2_quiver}
with the dimension matrix
\begin{align}
 t_{ij} =
\begin{cases}
 3 & (i,j)=(3,1), \\
 0 & \text{otherwise}.
\end{cases}
\end{align}
%$
% x_2y_1-y_2x_1, y_2z_1-z_2y_1, z_2x_1-x_2z_1.
%$
%By \eqref{eq:relations_simple},
We have the isomorphism
\begin{align}
 \scrR
  = \scrR(Q;\bt)
  \cong [\Gr(3, V_0 \otimes V_1)/\GL(V_0) \times \GL(V_1)]
\end{align}
of stacks,
where $V_i$ are $\bk$-vector spaces of dimension three.
%Under this identification,
%the relations for the commutative $\bP^2$ corresponds
%to the orbit of the point
%\begin{align}
% [\Ker(V_0 \otimes V_1\to \Sym^2V_0)]\in\Gr(3,V_0 \otimes V_1),
%\end{align}
%where we have chosen an isomorphism $V_0\cong V_1$.

The center of the group $\GL(V_0) \times \GL(V_1)$ acts
trivially on $\Gr(3, V_0 \otimes V_1)$,
so that we have the following isomorphism of stacks;
\begin{align}
 \scrR \cong [\Gr(3, V_0 \otimes V_1)/\PGL(V_0) \times \PGL(V_1)]
  \times [\Spec \bk /\bG_m\times \bG_m].
\end{align}
The corresponding GIT quotient
\begin{align}
 \Mbarrel=\Gr(3, V_0 \otimes V_1)^{\ss} \GIT \SL(V_0) \times \SL(V_1)
\end{align}
will be called the \emph{compact moduli of relations} of the Beilinson quiver.
Since
$
 \Pic \Gr(3, V_0 \otimes V_1) \cong \bZ
$
and
$
 \SL(V_0) \times \SL(V_1)
$
has no nontrivial character, there is no VGIT here.

Recall that the Grassmannian
$
 \Gr(3, V_0 \otimes V_1)
$
has the following description as a GIT quotient;
\begin{align}
 \Gr(3, V_0 \otimes V_1)
 \cong
 \Hom(V_2^{\vee}, V_0 \otimes V_1)^{\s}/\GL(V_2)
 \subset
 [V_0 \otimes V_1 \otimes V_2/ \GL(V_2)].
\end{align}
Here $V_2$ is another $3$-dimensional vector space and
$
 \subset
$
is an open immersion.
Hence there exists the following canonical open immersion
\begin{align}
\begin{split}
 \scrR
  &\cong [\Gr(3, V_0 \otimes V_1)/\GL(V_0) \times \GL(V_1)] \\
  &\subset [V_0 \otimes V_1 \otimes V_2
   / \GL(V_0) \times \GL(V_1) \times \GL(V_2)] \\
  &=: \cMquad
\end{split}
\end{align}
of stacks.
This induces a canonical morphism between
the corresponding GIT quotients, or the compact moduli,
which turns out to be an isomorphism;
%there exists a canonical isomorphism
\begin{align}
 \Mbarrel
  \cong \bP_{\bullet}(V_0 \otimes V_1 \otimes V_2)^{\ss}
   \GIT \SL(V_0) \times \SL(V_1)\times \SL(V_2)
  =: \Mbarquad.
\end{align}

The orbits and the ring of semi-invariants of this action
have been studied by many people,
including both mathematicians and physicists.
We collect some of the known results below:
%See for example \cite{MR2105225} and the references therein.
%In this paper we also use the results by Ng \cite{MR1361786, MR1348793},
%which were not mentioned in \cite{MR2105225}.

\begin{theorem}
\label{th:git_of_ncp2}
Consider the standard action of the group
$
 \SL(V_0) \times \SL(V_1)\times \SL(V_2)
$
on
$
 \bP_{\bullet}(V_0 \otimes V_1 \otimes V_2).
$
\begin{enumerate}[(i)]
 \item \label{it:ng_proposition2}
The stable locus coincides with the non-vanishing locus of the
hyperdeterminant $\Delta$, which is an invariant of degree $36$.
 \item \label{it:ng_theorem1}
The closed points of the stable locus
are in one-to-one correspondence with the isomorphism classes of
admissible triples $(E, L_0, L_1)$ with $E$ non-singular.
 \item \label{it:veronese_cuboid}
The orbit in
$
\bP_{\bullet}(V_0 \otimes V_1 \otimes V_2)
$
corresponding to the two-sided ideal of the commutative
$
\bP^2
$
is strictly semi-stable.
 \item \label{it:the_invariant_subring}
The invariant subring
\begin{align}
 (\Sym^{\bullet}_{\bk}
 (V_0^{\vee}\otimes V_1^{\vee}\otimes V_2^{\vee}))
 ^{\SL(V_0) \times \SL(V_1)\times \SL(V_2)}
\end{align}
is freely generated by three invariants $I_6, I_9, I_{12}$ of degree $6,9,12$, so that
the compact moduli of relations is isomorphic to the weighted projective plane
$\bP(6,9,12)$.
\end{enumerate}
\end{theorem}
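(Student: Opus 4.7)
The plan is to exploit the fact that $V_0 \otimes V_1 \otimes V_2$ is a Vinberg $\theta$-representation of $G := \SL(V_0) \times \SL(V_1) \times \SL(V_2)$, associated with the $\bZ/3$-grading of the simple Lie algebra of type $E_6$ arising from a triality: the degree-$0$ summand is the Lie algebra of $G$, and the degree-$1$ summand is precisely $V_0 \otimes V_1 \otimes V_2$. Vinberg's invariant-theoretic machinery \cite{MR0430168} then supplies all four items almost simultaneously.

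For (iv), the Chevalley-type restriction theorem for $\theta$-groups identifies the invariant ring on $V_0 \otimes V_1 \otimes V_2$ with the ring of invariants of the little Weyl group $W$ acting on a Cartan subspace $\mathfrak{a} \subset V_0 \otimes V_1 \otimes V_2$. For this grading $\mathfrak{a}$ is three-dimensional and $W$ is the Shephard--Todd complex reflection group $G_{25}$ of order $648$, whose fundamental invariants have degrees $6$, $9$, $12$. By Chevalley--Shephard--Todd the invariant ring is therefore a polynomial algebra on three generators $I_6, I_9, I_{12}$ of these degrees, and taking Proj gives the weighted projective plane $\bP(6,9,12)$.

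For (i), the Luna--Vinberg analysis of $\theta$-representations says that the closed $G$-orbits in $V_0 \otimes V_1 \otimes V_2$ are exactly those meeting $\mathfrak{a}$, and that the stable locus consists of the $G$-translates of regular semisimple elements of $\mathfrak{a}$. Since $G$ has no nontrivial character, stability coincides with having closed orbit and finite stabilizer; combined with (iv), this shows that the stable locus is the complement of the zero locus of the $W$-discriminant on $\mathfrak{a}$, pulled back by the Chevalley quotient. The group $G_{25}$ has $12$ reflecting hyperplanes with reflections of order $3$, so its discriminant is an irreducible invariant of degree $12 \cdot 3 = 36$, which one identifies up to scalar with the hyperdeterminant $\Delta$ of format $3 \times 3 \times 3$ (classically known to be irreducible of the same degree $36$). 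Item (ii) then follows from \pref{th:trinity_ncp2}: a stable geometric quadruple corresponds to a smooth cubic curve $E \subset \bP^2 \times \bP^2$ equipped with two line bundles of degree $3$, and $E$ is smooth exactly when $\Delta \ne 0$; this is precisely the admissibility condition.

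For (iii), the commutative projective plane corresponds through \pref{th:trinity_ncp2} to the linear triple $(\bP^2, \cO(1), \cO(1))$, or equivalently to the tensor
\begin{align}
w_0 = x_0 \otimes x_1 \otimes x_2 + y_0 \otimes y_1 \otimes y_2 + z_0 \otimes z_1 \otimes z_2,
\end{align}
whose vanishing locus in $\bP^2 \times \bP^2$ is the diagonally embedded $\bP^2$. A short computation shows $\Delta(w_0) = 0$, so $w_0$ is not stable, while $I_6(w_0) \ne 0$ (equivalently, a direct Hilbert--Mumford check exploiting the large stabilizer of $w_0$) shows $w_0$ is semistable and hence strictly semistable. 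The principal obstacle is the precise identification of the Vinberg discriminant for this grading with the $3 \times 3 \times 3$ hyperdeterminant, together with the verification that $W = G_{25}$ with the stated fundamental degrees; both facts are classical and are the content of the references \cite{MR0000221,MR0430168,MR1361786,MR1348793} cited in the introduction.
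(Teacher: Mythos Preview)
Your Vinberg $\theta$-group approach is exactly the one the paper has in mind: the paper does not prove this theorem in the body but cites \cite{MR1348793,MR1361786,MR0000221,MR0430168}, and later (in \pref{lm:invariant_theory_of_the_Weyl_group_by_Vinberg}) spells out precisely the identification you use, namely the $\bZ/3$-grading of $E_6$ with $\frakg_0 = \mathfrak{sl}_3^{\,3}$, Cartan subspace of dimension three, and little Weyl group the Shephard--Todd group $G_{25}=G_{648}$ with fundamental degrees $6,9,12$. Your arguments for (i), (ii), (iv) are sound. (A terminological quibble: the grading does not come from a ``triality''---$E_6$ has outer automorphism group $\bZ/2$---but from an inner automorphism of order three.)

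There is, however, a genuine error in your treatment of (iii). The tensor
\[
w_0 = x_0 \otimes x_1 \otimes x_2 + y_0 \otimes y_1 \otimes y_2 + z_0 \otimes z_1 \otimes z_2
\]
does \emph{not} represent the commutative $\bP^2$. Contracting $w_0$ with $V_2^{\vee}$ gives the three relations $x_0 x_1,\, y_0 y_1,\, z_0 z_1$, so the associated algebra is $\bk\langle x,y,z\rangle/(x^2,y^2,z^2)$; in the paper's later analysis this is the orbit $O(\I_6)$, not the Veronese orbit. Equivalently, the bilinear forms $x_1x_2,\,y_1y_2,\,z_1z_2$ do not cut out the diagonal in $\bP^2\times\bP^2$: the point $([1:0:0],[0:1:1])$ already lies in their common zero locus. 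The commutative $\bP^2$ corresponds instead to the totally antisymmetric tensor
\[
w_{\Ver}=\sum_{\sigma\in S_3}\operatorname{sgn}(\sigma)\, e_{\sigma(1)}\otimes e_{\sigma(2)}\otimes e_{\sigma(3)},
\]
which in the paper's Cartan-subspace coordinates $N_{uvw}$ is the point $(u:v:w)=(1:-1:0)$; its contractions with $V_0^{\vee}$ are exactly $y_1z_2-z_1y_2$, $z_1x_2-x_1z_2$, $x_1y_2-y_1x_2$, cutting out the diagonal. Your argument structure for (iii)---positive-dimensional stabilizer forces $\Delta=0$, while a nonvanishing invariant gives semistability---is correct, but it must be applied to $w_{\Ver}$, whose stabilizer contains the diagonal $\SL_3$, rather than to $w_0$.
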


%\begin{proof}
The proofs of (\pref{it:ng_proposition2}),
(\pref{it:ng_theorem1}), and
(\pref{it:veronese_cuboid})
can be found in
\cite[Proposition 2, p. 97]{MR1348793},
\cite[Theorem 1, p. 56]{MR1361786}, and
\cite[p. 92]{MR1348793} respectively.
The commutative $\bP^2$ corresponds
to the orbit of the \emph{Veronese cuboid}
(see \cite[p. 92]{MR1348793}),
which forms a polystable orbit.
The proof of (\pref{it:the_invariant_subring}) is first given
by \cite{MR0000221},
and also follows from the invariant theory of Vinberg
\cite{MR0430168}.
See also \cite{MR2105225} and references therein.

\subsection{Compact moduli of triples}
\label{sc:Mtri}

The \emph{Hesse family} of elliptic curves
(cf.~e.g.~\cite{Artebani-Dolgachev} and references therein)
is defined by
\begin{align} \label{eq:Hesse}
 S(3) := \{ ((x:y:z),(t_0:t_1) \in \bP^2_{x:y:z} \times \bP^1_{t_0:t_1}
  \mid t_0(x^3+y^3+z^3)+t_1xyz=0 \}
  \to \bP^1_{t_0:t_1}.
\end{align}
%The parameter space
%$
% \bP^1_{t_0:t_1}
%$
%is called the \emph{Hesse line}.
The generic member of this family is a smooth elliptic curve,
which degenerates to a triangle of three lines at the four points
\begin{align}
 (t_0:t_1)=(0:1), (1:-3), (1: -3\omega), (1: -3\omega^2),
\end{align}
where
$
 \omega
$
is a primitive third root of unity.
The Hesse pencil has nine base points,
%\begin{align}\begin{split}
%p_0=(0,1,-1),\ p_1=(0,1,-\omega),\ p_2= (0,1,-\omega^2), \\
%p_3=(1,0,-1),\  p_4=(1,0,-\omega^2),\  p_5=(1,0,-\omega), \\
%p_6=(1,-1,0),\  p_7=(1,-\omega,0),\ p_8=(1,-\omega^2,0),
%\end{split}\end{align}
%These nine points
which form the set of inflection points for any smooth member
$
 E_{t_0 : t_1}
$
of the Hesse family.
We set the origin to be
$
o = (1 : -1 : 0)
$,
so that the nine base points form the set
$
 E_{t_0 : t_1}[3]
$
of three-torsion points.
%If we choose the pair of points 
%$
% (p_1, p_3)
%$
%as a basis of the group
%$
% E_{t_0 : t_1}[3],
%$
%then under the corresponding isomorphism
%$
% E_{t_0 : t_1}[3]\cong\bZ / 3\times\bZ / 3,
%$
%the nine points above are described as follows;
%\begin{align}\begin{matrix}
%(0,0)&(1,0)&(2,0)\\
%(0,1)&(1,1)&(2,1)\\
%(0,2)&(1,2)&(2,2)
%\end{matrix}\end{align}
The group law and the translation by a point $p$
will be denote by $\oplus$, $\ominus$, and $\tau_p(-) = p \oplus (-)$.
By blowing up $\bP^2$ at the base locus,
one obtains a rational elliptic surface
\begin{align} \label{eq:blow-up}
 \pi \colon \Bl_{p_0,\dots, p_8} \bP^2 \to \bP^1_{t_0 : t_1},
\end{align}
which is isomorphic to the Hesse family \eqref{eq:Hesse}.
The exceptional curves
$
 D_0,\dots, D_8
$
are sections of $\pi$,
which give 3-torsion points
in a smooth fiber.

The Hesse family gives a projective model
for the Shioda's elliptic modular surface
$
S(3) \to X(3)
$.
It is a natural compactification of the family
\begin{align}
 S'(3) := (\bH \times \bC)/(\Gamma(3) \ltimes \bZ^2)
  \to X'(3) := \bH/\Gamma(3) ,
\end{align}
where $\bH$ is the upper half plane and
\begin{align}
 \Gamma(3)=\ker \lb \SL_2(\bZ) \to \SL_2(\bZ/3\bZ) \rb
\end{align}
is the principal congruence group of level 3
\cite{MR784140,MR0429918}.
The action of $\Gamma(3) \ltimes \bZ^2$
on $\bH \ltimes \bC$ is given by
\begin{align}
 (\gamma,m,n) \colon (\tau, z) \mapsto
  \lb \frac{a\tau+b}{c\tau+d}, \frac{z+m\tau+n}{c\tau+d} \rb.
\end{align}
The collection
$
 (\pi \colon S'(3) \to X'(3), (D_0 , D_1, D_3))
$
is a universal family of elliptic curves with level-3 structures,
where the section $D_0$ gives the origin
$
o
$
and
the sections $D_1$ and $D_3$ give
a basis of the group of 3-torsion points.

The residual action of the \emph{Hessian group}
\begin{align}
 G_{216} =
  \lb \SL_2(\bZ) \ltimes \lb \frac{1}{3}\bZ \rb^2 \rb /
  (\Gamma(3) \ltimes \bZ^2)
 \cong
 \SL_2(\bZ / 3\bZ) \ltimes (\bZ / 3)^2
\end{align}
on $S'(3)$ extends to the surface $S(3)$
(see \cite[p. 78]{MR784140}).
The Hessian group is identified,
through the blow-up morphism
$S(3)\to \bP^2$,
with the subgroup of $\PGL_3(\bk)$
preserving the Hesse pencil.
%(See e.g.~\cite{Artebani-Dolgachev}).

To any point $p \in S'(3) \setminus (D_0 \cup \cdots \cup D_8)$,
one can associate an admissible triple
\begin{align}\label{eq:the_triple_associated_to_a_point_on_S'(3)}
 (E_p := \pi^{-1}(\pi(p)),
  L_0 = \cO_{E_p}(3 D_0 \cap E_p),
  L_1 = \cO_{E_p}(3p)),
\end{align}
which forms a family $(\cE, \cL_0, \cL_1)$ of admissible triples
over $S'(3) \setminus (D_0 \cup \cdots \cup D_8)$.

\begin{lemma} \label{lm:Hessian-orbit}
Let $p$ and $q$ be two points
on $S'(3) \setminus (D_0 \cup \cdots \cup D_8)$.
\begin{enumerate}
 \item
The triples
$
(E_p , L_0 , \tau_p)
$
and
$
(E_q , L_0 , \tau_q)
$
are isomorphic if
$
p
=
g(q)
$
for some
$
g \in \SL_2( \bF_3 )
$.
 \item
The triples
$
(E_p , L_0 , L_1)
$
and
$
(E_q , L_0 , L_1)
$
are isomorphic if
$
p
=
g(q)
$
for some
$
g \in G_{216}
$.
\end{enumerate}
\end{lemma}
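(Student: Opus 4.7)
The plan is to exploit the semidirect product decomposition
$G_{216} = \SL_2(\bF_3) \ltimes (\bZ/3\bZ)^2$ and treat the two factors separately, then combine them.

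For Part 1, the key observation is that the lift of $\SL_2(\bF_3)$ to the action of $\SL_2(\bZ)$ on $\bH \times \bC$ fixes the fiber coordinate $z = 0$, so the induced action of $\SL_2(\bF_3)$ on $S(3)$ preserves the origin section $D_0$. Consequently, for $g \in \SL_2(\bF_3)$ with $p = g(q)$, the restriction $g|_{E_q} \colon E_q \to E_p$ is an isomorphism of curves that carries the origin to the origin, hence is a group isomorphism. Under such a group isomorphism, conjugation sends $\tau_q$ to $\tau_{g(q)} = \tau_p$; equivalently, the pullback along $g$ of $L_1 = \cO_{E_p}(3p)$ equals $\cO_{E_q}(3q)$. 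Since $g$ also maps $D_0 \cap E_q$ to $D_0 \cap E_p$, it identifies $L_0$ with $L_0$. This yields the required isomorphism of triples.

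For Part 2, the complementary factor $(\bZ/3\bZ)^2 \subset G_{216}$ acts fiberwise on $S(3)$: each nontrivial element is translation by a $3$-torsion section, that is, by one of $D_1,\dots,D_8$. Using the semidirect decomposition together with Part 1, the argument reduces to the case $g = \tau_t$ for a $3$-torsion section $t$, in which case $E_{g(q)} = E_q$ and $g(q) = q \oplus t$. I would then observe that $\tau_t \colon E_q \to E_q$ pulls back both $L_0 = \cO_{E_q}(3o)$ and $\cO_{E_q}(3(q \oplus t))$ to isomorphic line bundles, using the standard fact that translation by an $n$-torsion point preserves the isomorphism class of any line bundle whose degree is divisible by $n$; here $n = 3$ divides $\deg L_0 = \deg L_1 = 3$. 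This gives the required isomorphism of triples along the same fiber.

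The main obstacle I anticipate is the treatment of singular fibers of $\pi \colon S(3) \to X(3)$, namely the four triangles of lines in the Hesse pencil. There $E_q$ is reducible, and the notions of origin, group law, and translation must be interpreted via the generalized Jacobian of the smooth locus $E_q^{\mathrm{sm}}$. I would need to verify that the nine sections $D_i$ meet each fiber in the smooth locus, and that the degree-$3$ divisibility criterion used above holds componentwise along each triangle, so that the translation action of $(\bZ/3\bZ)^2$ genuinely preserves the relevant line bundles. Once this bookkeeping is set up, both halves of the argument run exactly as in the generic smooth-fiber case.
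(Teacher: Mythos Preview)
Your argument for the forward direction is correct and follows essentially the same line as the paper: $\SL_2(\bF_3)$ acts preserving the zero section $D_0$ (hence preserves origins and $L_0$), while $(\bZ/3\bZ)^2$ acts fiberwise by $3$-torsion translations, which preserve any line bundle of degree divisible by $3$.

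Two remarks are in order. First, the paper's proof also establishes the converse, namely that an isomorphism of triples forces $p$ and $q$ to lie in the same orbit: given $\varphi\colon(E_p,L_0,\tau_p)\to(E_q,L_0,\tau_q)$, one composes with a $3$-torsion translation to arrange that $\varphi$ preserves origins, then lifts to the universal cover to produce the required $g\in\SL_2(\bF_3)$; similarly in Part~2 one first finds $g\in\SL_2(\bF_3)$ and then an $h\in(\bZ/3\bZ)^2$ adjusting the second line bundle. The statement as written only asks for the ``if'' direction, so your proposal is complete for it, but the converse is what underlies the coarse moduli interpretation of $\Mbartri$.

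Second, your anticipated obstacle concerning singular fibers does not arise. By definition $S'(3) = (\bH \times \bC)/(\Gamma(3) \ltimes \bZ^2)$ sits over the \emph{open} modular curve $X'(3) = \bH/\Gamma(3)$, and every fiber $E_p$ is a smooth elliptic curve $\bC/(\bZ+\tau\bZ)$. The four $I_3$ fibers appear only in the compactification $S(3)\to X(3)$, which plays no role in the hypothesis $p,q\in S'(3)\setminus(D_0\cup\cdots\cup D_8)$. No generalized Jacobian bookkeeping is needed.
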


\begin{proof}
\begin{enumerate}
\item
Note that
$
g
\in
\SL_2( \bF_3 )
$
induces an isomorphism
$
g \colon
E_p \xto[]{\cong} E_{g(p)}
$
which preserves the origins.
Conversely, suppose that there exists
an isomorphism
\begin{equation}
\varphi
\colon
( E_p , L_0 , \tau_p )
\xto[]{\cong}
( E_q , L_0 , \tau_q ).
\end{equation}
Since
$
\varphi
$
respects the line bundles
$
L_0 = \cO (3o)
$,
by composing
$
\varphi
$
with a three-torsion translation of
$
E_q
$
if necessary, we can assume that
$
\varphi
$
respects the origins
$
o
$.
Since
$
\varphi
$
also respects the translations
$
\tau_p
$
and
$
\tau_q
$,
we see
$
\varphi(p)
=
q
$.
By passing to the universal covers,
we can find an element
 $
 g
 \in
 \SL_2 (\bF_3)
 $
acting as a lift of
$
\varphi
$.

\item

It is clear that
$
g
\in
G_{216}
=
 \SL_2(\bZ / 3\bZ) \ltimes (\bZ / 3)^2
$
induces an isomorphism of the triples.
Conversely consider an isomorphism
\begin{equation}
\psi
\colon
( E_p , \cO(3o) , \cO(3p) )
\xto[]{\cong}
( E_q , \cO(3o) , \cO(3q) ).
\end{equation}
Again composing
$
\psi
$
with a three-torsion translation, we can assume
$
\psi
$
respects the origins and find
$
g
\in
\SL_2(\bF_3)
$
as before.
Finally since
$
\varphi
$
sends
$
\cO(3p)
$
to
$
\cO(3q)
$,
one can find
$
h
\in
(\bZ / 3)^2
$
such that
$
( h \circ g )(p)
=
q
$.
Thus we conclude the proof.
\end{enumerate}\end{proof}

\begin{definition}
The \emph{compact moduli scheme $\Mbartri$ of triples} is defined as
$S(3)/G_{216}$.
\end{definition}

The exceptional divisors $D_0,\ldots,D_8$ in $S(3)$
are mapped to the same smooth rational curve $D$
in the quotient $S(3)/G_{216}$,
which is isomorphic to the modular curve $X(1)$
with respect to $\SL_2(\bZ)$.
The open subscheme
$
 (S'(3)/G_{216}) \setminus D
 \subset \Mbartri
$
is the coarse moduli scheme of
non-singular admissible triples.
Since the action of the group $G_{216}$
preserves the union $D_0 \cup \cdots \cup D_8$
of the exceptional divisors,
there exists a birational morphism
$
 S(3)/G_{216} \to \bP^2/G_{216}
$
which contracts the curve $D$ to a point.
The action of $G_{216}$ on $S(3)$ identifies
the three irreducible components of the four singular fibers,
and their image in the quotient
$S(3)/G_{216}$
is the cuspidal rational curve of arithmetic genus one.

%
%-----------------------------------------------------------
%

\subsection{From triples to relations}
\label{sc:proof}

%\begin{definition}\label{df:the_morphism_F}
For each point of
$
S'(3) \setminus (D_0 \cup \cdots \cup D_8)
$,
we can construct an elliptic triple as in
\pref{eq:the_triple_associated_to_a_point_on_S'(3)}.
From the family of triples thus obtained,
we can construct a family of geometric quadruples over
$
 S'(3) \setminus (D_0 \cup \cdots \cup D_8)
$
by the correspondence of \pref{th:trinity_ncp2}.
Hence we obtain the functorial morphism
\begin{equation}\label{eq:Ftilde}
 \Ftilde \colon S'(3) \setminus (D_0 \cup \cdots \cup D_8) \to \scrR
\end{equation}
to the stack $\scrR$
of relations of the Beilinson quiver.

\pref{th:git_of_ncp2}(\ref{it:ng_theorem1}) shows that
we see that the image of the morphism $\Ftilde$ lands
in the stable locus.
Hence one obtains a morphism
$
 \Ftilde \colon 
 S'(3) \setminus (D_0 \cup \cdots \cup D_8)
 \to
 \Mrel.
$
\pref{lm:Hessian-orbit} shows that
$
\Ftilde
$
factors through the quotient by the group
$
G_{216}
$,
so that we obtain the rational map
%\pref{eq:rational_map_from_triples_to_relations}.
%%\end{definition}
%
%We investigate the functorial rational map
\begin{equation} \label{eq:F}
 F \colon \Mbartri \dasharrow \Mbarrel.
\end{equation}
%which assigns to an elliptic triple the associated
%relation of the Beilinson quiver as in \pref{th:trinity_ncp2}.
%This turns out to be a birational morphism
%which contracts a smooth rational curve.
%%The points on the curve give rise to the commutative projective plane.
%We prove these claims by interpreting
%$
%F
%$
%as the moduli theoretic incarnation
%of the invariant theory of Vinberg \cite{MR0430168},
%applied to certain
%$
%(\bZ/3\bZ)
%$-graded Lie algebra.

To describe
$
F
$
in coordinates,
take a general point
$
 p = (u:v:w) \in \bP^2.
$
Since
$
S(3)
$
is the blowup of
$
\bP^2
$
in the base locus of the Hesse pencil,
we can regard
$
p
$
as a point of
$
S'(3) \setminus (D_0 \cup \cdots \cup D_8)
$.
The elliptic triple $(E, L_0, L_1)$
associated with $p$
is given by 
\begin{align}
\label{eq:elliptic_triple_in_coordinates}
\begin{cases}
 E =
\{
 (x:y:z) \in \bP^2 \mid u v w ( x^3 + y^3 + z^3 ) - ( u^3 + v^3 + w^3) x y z = 0
\},
\\
L_0
=\scO_E(1)
=\scO_E(3o),
\\
L_1
= \scO_E(3p).
\end{cases}
\end{align}

\begin{lemma} \label{lm:associated_quadruple_in_coordinates}
The quadruple associated to the triple \pref{eq:elliptic_triple_in_coordinates}
is
\begin{equation}\label{eq:associated_quadruple_in_coordinates}
( \bk^3_{x_0  y_0 z_0} , \bk^3_{x_1 y_1 z_1} , \bk^3_{x_2 y_2 z_2}
, \bk N_{uvw} ),
\end{equation}
where
$
N_{uvw}
\in
\bk^3_{x_0  y_0  z_0} \otimes \bk^3_{x_1 y_1 z_1}
\otimes
\bk^3_{x_2 y_2 z_2}
$
is given by
\begin{equation}\label{eq:Vinberg_normal_form}
w ( x_0 x_1 x_2 + y_0 y_1 y_2 + z_0 z_1 z_2)
+ u ( x_0 z_1 y_2 + y_0 x_1 z_2 + z_0 y_1 x_2)
+ v ( x_0 y_1 z_2 + y_0 z_1 x_2 + z_0 x_1 y_2).
\end{equation}
\end{lemma}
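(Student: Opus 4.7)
The plan is to recognize the elliptic triple \pref{eq:elliptic_triple_in_coordinates} as the triple associated to a specific Sklyanin algebra, extract the spaces $R_0$ and $R_1$ from the Sklyanin relations, and exhibit $N_{uvw}$ as the unique (up to scalar) element of $W = (R_0 \otimes V_2) \cap (V_0 \otimes R_1)$.

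First I would determine the Sklyanin parameter. Since $\tau_q^* \scO_E(3o) \cong \scO_E(3(\ominus q))$, we have $L_1 = \scO_E(3p) \cong \tau_{\ominus p}^* L_0$, so the triple is of the form $(E, L_0, \sigma^* L_0)$ with $\sigma = \tau_{\ominus p}$; by the discussion in \pref{sc:AS-regular} it arises from the Sklyanin algebra $S(a,b,c)$ with $(a:b:c) = \ominus p$. Since $o = (1:-1:0)$ is a flex of every Hesse cubic, $\ominus p$ is the third intersection of the line $\overline{op}$ with $E_p$; a short calculation (equivalently, the observation that the involution $(x:y:z) \mapsto (y:x:z)$ is the unique non-trivial automorphism of $\bP^2$ preserving each member of the Hesse pencil and fixing $o$) gives $\ominus(u:v:w) = (v:u:w)$, so our triple corresponds to $S(v,u,w)$.

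Substituting $(a,b,c) = (v,u,w)$ into \pref{eq:graph} then yields an explicit basis of $R_0 \subset V_0 \otimes V_1$:
\begin{align*}
\tilde f_1 &= v\, y_0 z_1 + u\, z_0 y_1 + w\, x_0 x_1, \\
\tilde f_2 &= v\, z_0 x_1 + u\, x_0 z_1 + w\, y_0 y_1, \\
\tilde f_3 &= v\, x_0 y_1 + u\, y_0 x_1 + w\, z_0 z_1.
\end{align*}
Because $\sigma$ is a translation, the corresponding $\bZ$-algebra is periodic under the identifications of \pref{sc:ASZ}, so $R_1 \subset V_1 \otimes V_2$ is spanned by the analogous expressions $\tilde f_i^{(12)}$ obtained from $\tilde f_i$ by the index shift $0 \mapsto 1,\, 1 \mapsto 2$. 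I would then set $N := \tilde f_1 \otimes x_2 + \tilde f_2 \otimes y_2 + \tilde f_3 \otimes z_2 \in R_0 \otimes V_2$ and regroup the nine resulting monomials by the first tensor factor; this yields the identity $N = x_0 \otimes \tilde f_1^{(12)} + y_0 \otimes \tilde f_2^{(12)} + z_0 \otimes \tilde f_3^{(12)} \in V_0 \otimes R_1$. Hence $N$ is a nonzero element of the one-dimensional space $W$, so $W = \bk\, N$, and a term-by-term comparison identifies $N$ with $N_{uvw}$. The main technical obstacle is the identification of the Sklyanin parameter as $\ominus p$, which relies on pinning down the negation map of the Hesse cubic with respect to the flex origin $(1:-1:0)$; everything else is bookkeeping.
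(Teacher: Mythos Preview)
Your proof is correct. The core computation is the same as the paper's---both hinge on the identity that contracting $N_{uvw}$ against the basis of $V_2$ yields exactly the graph relations \eqref{eq:graph} with $(a,b,c)=(v,u,w)$, together with $\ominus(u:v:w)=(v:u:w)$---but the logical direction is reversed. The paper starts from the tensor $N_{uvw}$, forms the matrix $M(x)$ of its $V_2$-contractions, invokes \cite[Proposition~1]{MR1361786} to identify the associated curve as $\det M(x)=0$ and the embedding $E'\hookrightarrow\bP^2\times\bP^2$ as the graph of $\tau_{(v:u:w)}$, and then reads off $(L_0,L_1)$. You instead go forward along the definitions: identify the Sklyanin parameter $(a,b,c)=\ominus p$, write down $R_0$ and (by $1$-periodicity of $\check S$) $R_1$, and exhibit $N=\sum_i \tilde f_i\otimes(\text{basis of }V_2)$ as the generator of $(R_0\otimes V_2)\cap(V_0\otimes R_1)$. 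Your route is slightly more self-contained, relying only on \pref{sc:AS-regular}--\pref{sc:ASZ} rather than the external reference; the paper's route makes the determinantal origin of the Hesse cubic explicit.
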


\begin{proof}
It suffices to show that the triple associated to
\pref{eq:associated_quadruple_in_coordinates}
coincides with
\pref{eq:elliptic_triple_in_coordinates}.
We follow \cite[Theorem 1]{MR1361786}.

If we identify
\pref{eq:Vinberg_normal_form}
with a
$
3 \times 3
$
matrix of linear functions in
$
(x_0 , y_0, z_0),
$
it is given by
\begin{equation}\label{eq:matrix_in_linear_funcitons_of_x}
M(x)
=
\begin{pmatrix}
w x_0 & v z_0 & u y_0
\\
u z_0 & w y_0 & v x_0
\\
v y_0 & u x_0 & w z_0
\end{pmatrix}.
\end{equation}
Hence, by \cite[Proposition 1 (iii)]{MR1361786},
the elliptic curve associated to
\pref{eq:associated_quadruple_in_coordinates}
is
\begin{equation}
\det M(x) = 0
\iff
(u^3 + v^3 + w^3) x_0 y_0 z_0
-
uvw (x_0^3 + y_0^3 + z_0^3 )
=
0.
\end{equation}
Thus we see the coincidence of the elliptic curves,
which will be denoted by $E$.

Next we consider the complete intersection
\begin{equation}\label{eq:graph_of_the_translation}
E' \hookrightarrow
\bP^2_{x_0y_0z_0} \times \bP^2_{x_1y_1z_1}
\end{equation}
defined by the derivatives
\begin{equation}\label{eq:derivatives}
\begin{split}
\partial_{x_2}N_{uvw}
&=
v y_0 z_1 + u z_0 y_1 + w x_0 x_1,
\\
\partial_{y_2}N_{uvw}
&=
v z_0 x_1 + u x_0 z_1 + w y_0 y_1,
\\
\partial_{z_2}N_{uvw}
&=
v x_0 y_1 + u y_0 x_1 + w z_0 z_1
\end{split}
\end{equation}
in
$
\bk^3_{x_0y_0z_0} \otimes \bk^3_{x_1y_1z_1}.
$
Again by
\cite[Proposition 1 (iii)]{MR1361786},
$
E'
$
is canonically identified with
$
E
$
by the first projection.

It is clear from the definition that
\begin{equation}
\cO_{\bP^2 \times \bP^2}(1,0)|_E
\cong
\cO_{\bP^2_{x_0y_0z_0}}(1)|_E
\cong
\cO_E(3o).
\end{equation}

On the other hand, as explained in
\pref{sc:AS-regular},
the embedding
\pref{eq:graph_of_the_translation}
coincides with the graph of the translation by the point
$
( v : u : w )
\in
E
$,
with respect to the origin
$
o
=
( 1 : -1 : 0 )
$.
To see this, compare
\pref{eq:graph}
with
\pref{eq:derivatives}.
Hence we see
\begin{equation}
\cO_{\bP^2 \times \bP^2} (0 , 1)|_E
\cong
\tau_{(v:u:w)}^*\cO_E(3o)
\cong
\cO(3(\ominus (v : u : w))),
\end{equation}
where
$
\ominus
$
denotes the inversion by the group low
of
$
E
$
(with respect to the origin
$
o
$
).
Since we know
$
\ominus (a : b: c )
=
(b:a:c)
$
in this case, we see
$
\cO_{\bP^2 \times \bP^2} (0 , 1)|_E
\cong
\cO_E(3 p).
$
This concludes the proof of
\pref{lm:associated_quadruple_in_coordinates}.
\end{proof}

\begin{comment}
Therefore we see that the rational map
$
F
$
is described as
\begin{equation}
(w:u:v) \mapsto N_{uvw}
\end{equation}
in coordinates.
\end{comment}

Let
$
G_{648}
$
be the inverse image of
$
G_{216}
$
under the natural morphism
$
\SL_3(\bk) \to \PGL_3(\bk)
$.

\begin{lemma}\label{lm:invariant_theory_of_the_Weyl_group_by_Vinberg}
The map
\begin{align} \label{eq:N}
N  \colon
\bP^2_{u:v:w}
\to
\bP(\bk^3 \otimes \bk^3 \otimes \bk^3) ;
\quad
( u : v : w ) \mapsto N_{uvw}
\end{align}
descends to an isomorphism
\begin{align} \label{eq:Nbar}
\Nbar
\colon
\bP^2/G_{648}
 \simto
\Mbarrel.
\end{align}
\end{lemma}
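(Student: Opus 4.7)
The strategy is to apply Vinberg's theory of $\theta$-groups \cite{MR0430168} to the $\bZ/3\bZ$-grading on the simple Lie algebra of type $E_6$, whose degree-zero part is $\mathfrak{sl}(V_0) \oplus \mathfrak{sl}(V_1) \oplus \mathfrak{sl}(V_2)$ and whose degree-one part is canonically $V_0 \otimes V_1 \otimes V_2$ as a representation of $\SL(V_0) \times \SL(V_1) \times \SL(V_2)$. I would identify the three-dimensional subspace $\mathfrak{h} \subset V_0 \otimes V_1 \otimes V_2$ spanned by the Vinberg normal forms $N_{uvw}$ of \pref{eq:Vinberg_normal_form} with a Cartan subspace for this grading, and identify the associated little Weyl group, acting linearly on $\mathfrak{h} \cong \bk^3$, with $G_{648}$. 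Its image in $\PGL_3(\bk)$ is then the Hessian group $G_{216}$ acting projectively on $\bP(\mathfrak{h}) = \bP^2$. These identifications are classical and can be verified either by direct bracket computation using \pref{eq:Vinberg_normal_form} or by citing the literature on $\theta$-representations and the Hesse pencil.

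Given the identifications above, Vinberg's main theorem supplies an isomorphism of graded invariant rings
\begin{equation*}
\bk[V_0 \otimes V_1 \otimes V_2]^{\SL(V_0) \times \SL(V_1) \times \SL(V_2)}
\simto
\bk[\mathfrak{h}]^{G_{648}}
\end{equation*}
induced by restriction, together with a bijection between closed semistable $\SL(V_0) \times \SL(V_1) \times \SL(V_2)$-orbits and $G_{648}$-orbits on the semistable locus of $\mathfrak{h}$. Passing to Proj yields an isomorphism $\Mbarquad \simto \bP^2 / G_{648}$ which by construction sends $[N_{uvw}]$ to the orbit of $(u:v:w)$, and hence agrees with the inverse of $\Nbar$. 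Combined with the natural isomorphism $\Mbarrel \cong \Mbarquad$ established just before the lemma, this yields the desired isomorphism.

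Finally, one must check that $\Nbar$ is an everywhere-defined morphism rather than just a rational map. By \pref{lm:associated_quadruple_in_coordinates} and \pref{th:git_of_ncp2}(\ref{it:ng_theorem1}), the image of the open locus of nonsingular Hesse cubics in $\bP^2$ already lies in the stable locus of $\bP(V_0 \otimes V_1 \otimes V_2)$, and Vinberg's theory describes explicitly the preimage in $\mathfrak{h}$ of the semistable locus; one verifies that this preimage equals all of $\mathfrak{h} \setminus \{0\}$, so that $N_{uvw}$ is semistable for every $(u:v:w) \in \bP^2$. The resulting bijective morphism between normal projective surfaces with matching invariant rings is then an isomorphism. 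The principal obstacle is the Lie-theoretic identification of $\mathfrak{h}$ as a Cartan subspace and of its little Weyl group as $G_{648}$, which is the content of the ``certain graded Lie algebra'' remark in the introduction; the rest is a direct application of Vinberg's machinery.
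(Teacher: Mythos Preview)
Your proposal is correct and takes essentially the same approach as the paper: both apply Vinberg's theory of graded Lie algebras to the $\bZ/3\bZ$-grading on $E_6$ (what the paper cites as ``No.~2'' in Vinberg's table and $2^0$ in \cite{MR0486173}), identify the Cartan subspace with the span of the $N_{uvw}$ and the little Weyl group with the Shephard--Todd group No.~25 $\cong G_{648}$, and then invoke the Chevalley-type isomorphism of invariant rings to obtain the isomorphism of GIT quotients. Your additional remarks about semistability and bijectivity are fine but slightly redundant, since the graded ring isomorphism $\bk[V]^{\Ghat_0}\simto\bk[\frakc]^{G_{648}}$ already yields an isomorphism of $\operatorname{Proj}$'s directly.
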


\begin{proof}
%We use the invariant theory associated to graded Lie algebras
%due to \cite{MR0430168}.
Let $m$ be a positive integer and
consider a $\bZ/m$-graded Lie algebra
$
 \frakg = \bigoplus_{i \in \bZ/m} \frak{g}_i.
$
Let further $G$ be a linear algebraic group
whose Lie algebra is isomorphic to $\frakg$, and
$
G_0 \subset G
$
be the connected closed subgroup corresponding to
$
\frakg_0
$.
Note that the group
$
G_0
$
naturally acts on
$
V := \frakg_1
$
by conjugation.
The universal cover of $G_0$
will be denoted by $\Ghat_0$.
Vinberg \cite{MR0430168}
introduced the notions of the Cartan subspace
$
 \frakc \subset V,
$
the Weyl group $W$,
and the action of $W$ on $\frakc$.
He showed that the natural morphism
\begin{equation}\label{eq:invariant_theory_of_weyl_group}
\bk[V]^{G_0}
\to
\bk[\frakc]^{W}
\end{equation}
is an isomorphism
under some assumption
\cite[Theorem 7]{MR0430168}.
We apply his results to the Lie algebra
$
\frakg
$
which is named as No.2 in
\cite[p. 491, Table]{MR0430168}.
According to
\cite[p.\ 409, $2^{0}$]{MR0486173},
we obtain the standard action of the group
$
\Ghat_0
=
SL(3) \times SL(3) \times SL(3)
$
on
$
V
=
\frakg_1
=
\bk^3 \otimes \bk^3
\otimes \bk^3,
$
and the Cartan subspace
$
\frakc
\subset
V
$
is spanned by the three elements
\begin{equation}
\begin{cases}
e_1 \otimes e_3 \otimes e_2
+
e_2 \otimes e_1 \otimes e_3
+
e_3 \otimes e_2 \otimes e_1,
\\
e_1 \otimes e_2 \otimes e_3
+
e_2 \otimes e_3 \otimes e_1
+
e_3 \otimes e_1 \otimes e_2,
\\
e_1 \otimes e_1 \otimes e_1
+
e_2 \otimes e_2 \otimes e_2
+
e_3 \otimes e_3 \otimes e_3,
\end{cases}
\end{equation}
where
$
\langle
e_1, e_2 , e_3
\rangle
$
is the standard basis of
$
\bk^3
$.
The Weyl group
$
W
$
is isomorphic to the simple complex reflection group No. 25 of
\cite[p.301, table VII]{MR0059914},
which is nothing but
$
G_{648}
$.
Hence we obtain an isomorphism
\begin{equation}
\bk[V]^{\Ghat_0}
 \simto
\bk[\frakc]^{G_{648}},
\end{equation}
as a special case of
\pref{eq:invariant_theory_of_weyl_group}.
In coordinates, this isomorphism coincides with the map \eqref{eq:N}.
\end{proof}

\begin{corollary}
The rational map $F$ in \eqref{eq:F}
is the composition of the contraction
$
 \Mbartri \to \bP^2/G_{648}
$
of the smooth rational curve obtained as the image of the sections
$
D_0 , \dots , D_8
$,
and the isomorphism $\Nbar$
in \eqref{eq:Nbar}.
\end{corollary}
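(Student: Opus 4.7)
The plan is to descend to the pre-quotient level, apply the explicit coordinate formula of \pref{lm:associated_quadruple_in_coordinates}, and then invoke the invariant theory of \pref{lm:invariant_theory_of_the_Weyl_group_by_Vinberg}. By construction $F$ is induced from $\Ftilde$ of \pref{eq:Ftilde} by taking $G_{216}$-quotients, and \pref{lm:associated_quadruple_in_coordinates} shows that, under the isomorphism $S'(3) \setminus (D_0 \cup \cdots \cup D_8) \cong \bP^2_{u:v:w} \setminus \{p_0, \ldots, p_8\}$ afforded by the blow-down, the morphism $\Ftilde$ coincides with the restriction of the map $N$ of \pref{eq:N} to that open set, composed with the GIT quotient morphism.

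First I would observe that the blow-down $S(3) \to \bP^2_{u:v:w}$ is $G_{216}$-equivariant: the Hessian group $G_{216} \subset \PGL_3(\bk)$ acts on $\bP^2$ preserving the Hesse pencil and hence lifts canonically to the blow-up at the nine base points. Taking $G_{216}$-quotients thus yields a birational morphism
\begin{align}
\Mbartri = S(3)/G_{216} \to \bP^2/G_{216}
\end{align}
whose exceptional locus is the image of $D_0 \cup \cdots \cup D_8$. Since the subgroup $(\bZ/3)^2 \subset G_{216}$ acts on each smooth fiber by translation on three-torsion points and therefore transitively permutes the nine sections, this image is the quotient of a single section $D_0 \cong \bP^1$ by its stabilizer in $G_{216}$, hence a smooth rational curve.

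Next I would observe that the natural surjection $G_{648} \twoheadrightarrow G_{216}$ has kernel the center $\mu_3 \subset \SL_3(\bk)$, which acts trivially on $\bP^2_{u:v:w}$; consequently $\bP^2/G_{648} = \bP^2/G_{216}$ as schemes. Combining this with \pref{lm:invariant_theory_of_the_Weyl_group_by_Vinberg}, which exhibits $N$ as the composition of the quotient $\bP^2 \to \bP^2/G_{648}$ with the isomorphism $\Nbar$, one obtains, upon passing to $G_{216}$-quotients in the coordinate description of $\Ftilde$, the desired factorization
\begin{align}
F = \Nbar \circ \left( \Mbartri \to \bP^2/G_{648} \right).
\end{align}

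The only step requiring genuine work is the coordinate identification of $\Ftilde$ with $N$, which is precisely the content of \pref{lm:associated_quadruple_in_coordinates}; the compatibility of the $G_{216}$-action on $S(3)$ with the universal family of triples \pref{eq:the_triple_associated_to_a_point_on_S'(3)} is guaranteed by \pref{lm:Hessian-orbit}, and the remainder of the argument is a formal passage to quotients that presents no genuine obstacle.
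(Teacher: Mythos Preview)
Your proposal is correct and follows essentially the same route as the paper, which treats the corollary as an immediate consequence of \pref{lm:associated_quadruple_in_coordinates} and \pref{lm:invariant_theory_of_the_Weyl_group_by_Vinberg} without spelling out a proof. You have supplied the details the paper omits---the $G_{216}$-equivariance of the blow-down, the identification $\bP^2/G_{648}=\bP^2/G_{216}$ via the trivial action of the central $\mu_3$, and the irreducibility of the image of the nine sections---all of which are routine but worth making explicit.
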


%
%-------------------------------------
%

\subsection{An isomorphism between $\Mbartri$ and $\Mbarst$}
 \label{sc:isom_with_Mbar_1,2}

We show the existence of a natural isomorphism
$
\Mbartri
 \simto
\Mbarst,
$
where
$
\Mbarst
$
is the coarse moduli scheme of
stable two-pointed genus one curve.
Through this isomorphism,
we can identify the geometry of
$
\Mbartri
$
with known results on
$
\Mbarst
$.

\begin{proposition}\label{pr:isom_with_Mbar_1,2}
There exists a natural isomorphism
\begin{equation}\label{eq:isom_with_Mbar_1,2}
\psi
\colon
\Mbartri
=
S(3) / G_{216}
\simto
\Mbarst,
\end{equation}
which is compatible with the standard isomorphism
\begin{equation}
X(3) / \PSL ( 2 , \bF_3)
=
X(1)
\cong
\Mbarstb.
\end{equation}
\end{proposition}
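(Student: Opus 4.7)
\textbf{The plan} is to construct a natural morphism $\psi \colon \Mbartri \to \Mbarst$ through a $G_{216}$-equivariant family of stable $2$-pointed genus one curves on $S(3)$, and then show it is an isomorphism by combining analytic uniformizations with Zariski's main theorem.

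\textbf{The family.} Form $C := S(3) \times_{X(3)} S(3)$ with its first projection $p_1$, and equip $p_1$ with two tautological sections: $\sigma_1 := p_1^* D_0$, and $\sigma_2 := (\id, [3])$, where $[3] \colon S(3) \to S(3)$ is the multiplication-by-three morphism of the elliptic fibration. The two sections agree precisely on the $3$-torsion locus $D_0 \cup \cdots \cup D_8$. Blowing up $C$ along this locus, and performing the standard relative stabilization on the four singular $I_3$-fibers to remove unstable components, yields a flat family $\widetilde C \to S(3)$ of stable $2$-pointed genus one curves, whose marked sections are the strict transforms of $\sigma_1$ and $\sigma_2$.

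\textbf{Descent.} Lift the $G_{216}$-action on $S(3)$ to $C$ by $t \cdot (s, s') := (s + t, s')$ for $t \in (\bZ/3\bZ)^2$ (first factor only) and $g \cdot (s, s') := (g(s), g(s'))$ for $g \in \SL_2(\bF_3)$ (diagonally). These recipes combine into a well-defined action preserving both sections: $\sigma_1$ is fixed by the first recipe because it acts trivially on the second factor, and by the second because $g$ fixes the origin; $\sigma_2$ is fixed by the first because $3(s + t) = 3s$ for $t \in E[3]$, and by the second because $g$ is a group homomorphism and hence commutes with $[3]$. The blow-up center is $G_{216}$-invariant, so $\widetilde C$ descends to a family over $\Mbartri = S(3)/G_{216}$, and this descended family produces the desired morphism $\psi$.

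\textbf{Bijectivity and isomorphism.} Analytically, $S'(3)/G_{216} \cong (\bH \times \bC)/(\SL_2(\bZ) \ltimes (\tfrac{1}{3}\bZ)^2)$, and the rescaling $w = 3z$ identifies this with $(\bH \times \bC)/(\SL_2(\bZ) \ltimes \bZ^2)$, the analytic universal elliptic curve over $X'(1) = \bH/\SL_2(\bZ)$. The smooth locus of $\Mbarst$ is the same universal elliptic curve with its zero-section removed, and $\psi$ restricts to precisely this identification. On the boundary, the nine sections $D_i$ form a single $G_{216}$-orbit that maps bijectively onto $\Delta_{0,2} \cong \Mbarstb$, while the four $I_3$-fibers fuse onto $\delta_{\mathrm{irr}}$. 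Both $\Mbartri$ and $\Mbarst$ are irreducible projective normal surfaces, so the birational bijective morphism $\psi$ is an isomorphism by Zariski's main theorem. The compatibility with $X(3)/\SL_2(\bF_3) = X(1) \cong \Mbarstb$ is then immediate, since the forgetful morphism $\Mbarst \to \Mbarstb$ corresponds under $\psi$ to the base projection $S(3)/G_{216} \to X(3)/G_{216} = X(1)$.

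\textbf{The main obstacle} is to verify that $\widetilde C$ really is a family of stable $2$-pointed genus one curves over all of $S(3)$, in particular over the four singular $I_3$-fibers, where one must check that the blow-up together with the appropriate relative stabilization produces the correct $\delta_{\mathrm{irr}}$-type boundary curves. Once this is established, the remaining bijectivity and isomorphism arguments proceed by the formal steps sketched above.
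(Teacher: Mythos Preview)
Your overall strategy---build a $G_{216}$-equivariant family of stable $2$-pointed genus one curves, descend, and conclude via normality---matches the paper's, and the resulting map is the same. The key difference is how the family is produced. The paper first passes to the quotient by the normal subgroup $(\bZ/3\bZ)^2 \lhd G_{216}$, obtaining $\Sbar(3) = S(3)/(\bZ/3\bZ)^2$, whose fibration over $X(3)$ already has $I_1$ fibers (with four $A_2$ surface singularities at their nodes). On $\Sbar(3)$ the classifying map $\Psi \colon \Sbar(3) \to \scMbar_{1,2}$ is then tautological: the second marked point at $p$ is $p$ itself on its own fiber, with a single blow-up when $p$ happens to be the node. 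One then quotients by the residual $\SL_2(\bF_3)$-action and observes that $\psi$ is a \emph{finite} birational morphism between normal projective varieties, hence an isomorphism.

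This two-step quotient dissolves exactly the obstacle you flag, and in fact your construction has a sharper problem there than you indicate. Your section $\sigma_2 = (\id,[3])$ presumes that $[3]$ is a regular endomorphism of $S(3)$, but multiplication by $3$ is a priori a morphism only on the N\'eron model (the fiberwise smooth locus). Since $[3]$ kills the component group $\bZ/3\bZ$ of each $I_3$-fiber, it factors through the singular $I_1$-model rather than extending across the twelve nodes; in general $[m]$ extends to the minimal regular model with $I_n$ fibers only when $\gcd(m,n)=1$. So $\sigma_2$ is only a rational section, and your blow-up and stabilization have no well-defined input over those nodes. The paper's quotient $S(3)\to\Sbar(3)$ \emph{is} the regular realization of your $[3]$ (via $E/E[3]\cong E$, under which the paper's marked points $([o],[s])$ become your $(o,3s)$), and working on $\Sbar(3)$ replaces ``stabilize an $I_3$ with two marks on one component'' by the trivial ``the $I_1$ fiber is already stable.'' Your analytic identification on the smooth locus is a nice touch, though it becomes unnecessary once finiteness of $\psi$ is in hand.
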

\begin{proof}
Since
$
G_{216}
\cong
( \bZ / 3 )^2 \rtimes \SL ( 2 , \bF_3 )
$,
we can regard
$
\Mbartri
$
as the quotient of
$
\Sbar(3)
=
S(3) / ( \bZ / 3 )^2
$
by
$
\SL ( 2 , \bF_3 )
$.
As explained in
\cite[Proposition 5.1]{Artebani-Dolgachev},
the surface
$
\Sbar(3)
$
has four
$
A_2
$-singularities,
and its crepant resolution is isomorphic,
as a fibration over
$
X(3)
$,
to
$
S(3)
$.
The fibration
$
\pibar \colon \Sbar(3) \to X(3)
$
has four singular fibers of type
$
\mathrm{I}_1
$,
and hence we can regard
$
\pibar
$
as a family of stable one-pointed genus one curves;
in fact, the section
$
E
\subset
\Sbar(3)
$
can be regarded as the universal marked point.
Thus we obtain the classifying morphism
$
c
\colon
X(3) \to \scMbar_{1,1}
$
and the commutative diagram
\begin{align*}
\xymatrix{
\ar @{} [dr] |{\square}
\Sbar(3) \ar[r]^{24}_{\Psi} \ar[d]^{\pibar}
&
\ar @{} [dr] |{\circlearrowleft}
\scMbar_{1,2} \ar[r]^{1} \ar[d]^{u}
&
\Mbarst \ar[d]
\\
X(3) \ar[r]^{24}_{c}
&
\scMbar_{1,1} \ar[r]^{\frac{1}{2}}
&
\Mbarstb.}
\end{align*}
In the diagram above,
the numbers on the arrows indicate their degrees.
The morphism
$
u
$
is the universal family.
We can also understand
$
\Psi
$
as the classifying morphism;
given
$
p
\in
\Sbar(3)
$,
we can define the associated stable two-pointed genus one curve as follows.
The underlying curve is the fiber
$
E_p
:=
\pibar^{-1}(\pibar(p))
$,
the first marked point is the intersection with
the section
$
E
$,
and the second marked point is
$
p
$.
If
$
p
$
is the singular point of
$
E_p
$,
we consider the blow-up of
$
\Sbar(3)
$
in
$
p
$
and use the inverse image of
$
E_p
$,
which is a curve of type
$
\mathrm{I}_2
$,
as the underlying curve.

Now we can check that the action of
$
\SL(2,\bF_3)
$
on
$
\Sbar(3)
$
respects the finite surjective morphism
$
\Sbar(3)
\to
\Mbarst
$, so that we obtain a birational morphism
$
\psi
\colon
\Mbartri
=
\Sbar(3) / \SL(2, \bF_3)
\twoheadrightarrow
\Mbarst
$
over
$
X(3) / \PSL(2 , \bF_3)
=
X(1)
=
\Mbarstb
$.
Since
$
\psi
$
is a finite birational morphism between
normal projective varieties,
it is an isomorphism.
\end{proof}

The boundary
$
\Mbarst
\setminus
M_{1,2}
$
is the union of two smooth rational curves
$
\Delta_{\irr}
$
and
$
\Delta_{0,2}
$.
The former is the fiber over the cusp of
$
X(1)
\cong
\Mbarstb
$, and the latter is the section of
$
\Mbarst
\to
\Mbarstb
$.
Since the morphism
$
\psi
$
comes from the classifying morphism
$
\Psi
$,
we see that under the isomorphism
$
\psi
$,
the image of the four singular fibers of
$
\pibar
$
in
$
\Mbartri
$
is identified with
$
\Delta_{\irr}
$.
Similarly, the section
$
\Ebar := \Psi(E)
$
is identified with
$
\Delta_{0,2}
$.

Birational geometry of
$
\scMbar_{1,2}
$
and its coarse moduli scheme are studied in
\cite{MR2801404,MR2862065} and
\cite{MR3174737}.
In particular, the following facts are proved:
\begin{itemize}
 \item
$
\scMbar_{1,2}
$
is isomorphic to the global quotient of the total space
of the Weierstrass family.
 \item
 There exists a birational morphism
 \begin{equation}\label{eq:contraction_of_M1,2bar}
 G \colon \Mbarst \to \Mbarst(1)
 \end{equation}
which contracts the divisor
$
\Delta_{0,2}
\subset
\Mbarst
$
to a point. Here
$
\Mbarst(1)
$
is the coarse moduli space of
$
1
$-stable
$
2
$-pointed genus
$
1
$
curves, and the point
$
G(\Delta_{0,2})
$
represents the cuspidal plane cubic curve
with two marked points
(see \cite[p. 1844]{MR2862065} and
\cite[Lemma 4.1]{MR2862065}).

 \item
There exists an explicit isomorphism
$
\Mbarst(1)
\simto
\bP(1,2,3)
$
through which the morphism
$
G
$
of
\pref{eq:contraction_of_M1,2bar}
is identified with a weighted blow-up of
$
\bP(1,2,3)
$
at the point
$
(1:0:0)
$
(see
\cite[Section 2]{MR3174737}).

 \item
The image
$
G ( \Delta_{\irr} )
\subset
\Mbarst(1)
$
is identified with the cuspidal curve given by
\begin{equation}\label{eq:Weierstrass}
 \{ (x:y:z) \in \bP(1,2,3) \mid 4y^3-27z^2 = 0 \}.
\end{equation}
The defining equation comes from the discriminant of the Weierstrass family
(see \cite[Theorem 2.3]{MR3174737}).
\end{itemize}

Since the morphism
$
F
$
contracts the section
$
\Ebar
\subset
\Mbartri
$
and
$
\psi
$
identifies it with
$
\Delta_{0,2}
$,
we obtain an isomorphism
\begin{equation}
\sigma \colon \bP(6,9,12) \simto \bP(1,2,3)
\end{equation}
which fits into the following commutative diagram:
\begin{align*}
\xymatrix{
\ar @{} [dr] |{\circlearrowleft}
\Mbartri \ar[r]^{\psi} \ar[d]_{F}
&
\Mbarst \ar[d]^{G}
\\
( \Mbarrel \cong ) \bP(6,9,12)
\ar[r]_(0.6){\sigma}
&
\bP(1,2,3)}
\end{align*}
Finally, the boundary of
$
\Mbarrel
$
is the prime divisor
\begin{align}
 \Deltarel
  := \Mbarrel \setminus \Mrel
  =  F ( \Ebar ).
\end{align}
The morphism
$
\sigma
$
identifies
$
\Deltarel
\subset
\Mbarrel
$
with the cuspidal curve
\pref{eq:Weierstrass} and
the point
$
F(\Delta_{0,2})
\in
\Deltarel
$
is mapped to the cusp
$
(1:0:0)
$.
In particular,
the restriction of $F$ to
$
\Delta_{\irr} \cong \Ebar
\subset
\Mbartri
$
gives the normalization of
$
\Deltarel.
$

\subsection{Interpretation of points on the boundary}

Following \cite{MR1348793},
we look more closely at the boundary of
$
\Mbartri
\cong
\Mbarst
$
and
$
\Mbarrel
$.
The poset of semi-stable orbits,
whose partial order is defined by inclusions
of the closures of the orbits,
is described in
\cite[p. 92]{MR1348793}.
\begin{itemize}
 \item
Any closed point of the divisor
$
\Delta_{0,2}
$
gives rise to a triple
$
( E , L_0 , L_1 )
$
satisfying
$
L_0
\cong
L_1
$,
which in turn corresponds to the commutative
$
\bP^2
$.
All those points are mapped to the cusp of
$
\Deltarel
$
%(or \eqref{eq:Weierstrass})
under the contraction
$
F=G
$.
It is called the Veronese point and representing closed orbit $O(\Ver)$
corresponds to the linear triple,
which is associated with the commutative $\bP^2$.

\item
The image of the four singular points of
$\Sbar(3)$ in $\Mbartri$
is represented by the closed orbit
$O(\I_6)$.
It corresponds to the triple
$
(E , L_0 , L_1),
$
where $E$ is the singular genus one curve of type $\I_6$,
and the line bundles $L_0$ and $L_1$ have multi-degrees
$
 (1 , 0 , 1 , 0 , 1 , 0)
$
and
$
 (0 , 1 , 0 , 1 , 0 , 1)
$
respectively.
This is not an admissible triple
in the sense of \pref{df:admissible}.
The associated graded algebra is isomorphic to
$
 \bk \langle x , y , z \rangle / (x^2 , y^2 , z^2),
$
which is not an AS-regular algebra.
 \item
The closed orbits
corresponding to the rest of $\Deltarel$ are
$
 O(\I_3^{\lambda^3}(a)).
$
They are in one-to-one correspondence
with isomorphism classes of admissible triples
$
(E , L_0 , L_1)
$
such that both $L_0$ and $L_1$ embed $E$ into $\bP^2$
as a triangle of three lines.
\end{itemize}

%
%---------------------------------------------
%

\appendix

\section{Proof of \pref{th:SVdB}}
\label{sc:VdB}

In this section,
we give a proof of \pref{th:SVdB},
which we learned from Van den Bergh.
\pref{th:SVdB} is stated in \cite{MR1816070},
although the proof cannot be found in the literature
to the authors' best knowledge.

%Let us first recall the notions of rank, torsion modules,
%and torsion-free modules from \cite[Section 3.3]{MR2102090}
%and \cite[Section 1.9.2]{MR2708379}.
Let $S$ be a quadratic AS-regular algebra of dimension 3.
The Hilbert series of a finitely-generated graded $S$-module $M$
is defined by
\begin{align}
 h_M(t)
  =\sum_{i\in\bZ}(\dim_{\bk}M_i)t^i
  \in \bZ (\!( t )\!).
\end{align}
%Note that $h_M(t)\in\bZ((t))$ is a Laurent power series.
It follows from \eqref{eq:AS-regular} that
the Hilbert series of $S$ is given by
\begin{align}
 h_S(t)=(1-t)^{-3}.
\end{align}
Since $M$ admits a finite projective resolution,
there exists a Laurent polynomial $q_M(t)$ such that
\begin{align}
 h_M(t) = q_M(t) h_S(t).
\end{align}
%where
%$
%h_S(t)
%$
%is the Hilbert series of
%$
%S
%$
%as a module over itself.
The Laurent polynomial $q_M(t)$ is called
the \emph{characteristic polynomial} of $M$.
We expand $q_M(t) \in \bZ \ld t^{\pm 1}\rd$
around $t=1$ as
\begin{align}
 q_M(t)=r+a(1-t)+b(1-t)^2+f(t)(1-t)^3,
\end{align}
where $r,a,b\in\bZ$ and $f(t)\in\bZ \ld t^{\pm 1} \rd$.
The integer $r$ is called the \emph{rank} of the module $M$.
The \emph{Gelfand-Kirillov dimension}
(or the \emph{GK-dimension} for short)
is defined as the growth rate of $\dim_{\bk} M_i$
as a function of $i$;
\begin{align}
\GKdim(M) =
\begin{cases}
 3 & r > 0, \\
 2 & r=0 \text{ and } a>0, \\
 1 & r=a=0 \text{ and } b>0, \\
 0 & r=a=b=0 \text{ and } f(1) = \dim_\bk M > 0.
\end{cases}
\end{align}
%
%\begin{enumerate}
%\item $\GKdim(M)=3 \iff r>0$.
%\item $\GKdim(M)=2 \iff r=0$ and $a>0$.
%\item $\GKdim(M)=1 \iff r=a=0$ and $b>0$.
%\item $\GKdim(M)=0 \iff r=a=b=0$ and $f(1)=\dim_{\bk}M>0$.
%\end{enumerate}
%
Let $\pi \colon \gr S \to \qgr S$ be the natural quotient functor,
which is exact.
The GK-dimension of an object of $\qgr S$ is defined by
\begin{align}
 \GKdim(\pi(M))=\GKdim(M)-1. 
\end{align}
%Following the convention of \cite{MR2708379},
%we also denote the image under $\pi$ of an object of $\grmod(S)$
%by the corresponding script letter. For example $\scM$ denotes $\pi(M)$.
An object $\cM\in\qgr S$ is said to be \emph{torsion-free}
if it is isomorphic to an object of the form $\pi(M)$ such that
any non-trivial submodule of $M$ has GK-dimension three.
%They have the following characterization.

\begin{lemma}[{\cite[Lemma 2.2.1]{MR2708379}}]
An object $0\not=\scM\in\qgr{S}$ is torsion-free if and only if
$\Hom(\scN,\scM)=0$ holds for any $\scN\in\qgr{S}$ with $\GKdim(\scN)\le 1$.
\end{lemma}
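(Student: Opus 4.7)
The plan is to establish both implications by exploiting the basic relation $\GKdim(\pi(N)) = \GKdim(N) - 1$ together with exactness of the quotient functor $\pi$ and the fact that subobjects of $\pi(M)$ lift to submodules of $M$.

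For the forward direction, I would argue as follows. Let $\scM = \pi(M)$ with every nonzero submodule of $M$ of GK-dim $3$, and suppose $f \colon \scN \to \scM$ is a nonzero morphism from some $\scN$ with $\GKdim \scN \le 1$. Let $\scM' = \mathrm{Im}(f) \subset \scM$, which is a nonzero subobject; lift it to a nonzero submodule $M' \subset M$, so that $\scM' = \pi(M')$. By the torsion-free hypothesis $\GKdim M' = 3$, giving $\GKdim \scM' = 2$. On the other hand, $\scM'$ is a quotient of $\scN$, so $\GKdim \scM' \le \GKdim \scN \le 1$, a contradiction. Hence $f = 0$.

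For the reverse direction, suppose $\Hom(\scN, \scM) = 0$ for every $\scN$ with $\GKdim \scN \le 1$, and choose any representative $M_0$ with $\pi(M_0) = \scM$. Consider the collection of submodules of $M_0$ of GK-dim at most $2$. A short exact sequence argument using the inequality $\GKdim(N_1 + N_2) \le \max(\GKdim N_1, \GKdim N_2)$, which follows from $0 \to N_1 \to N_1 + N_2 \to N_2/(N_1 \cap N_2) \to 0$, shows that this collection is closed under finite sums; Noetherianity of the three-dimensional quadratic AS-regular algebra $S$ then ensures a maximal such element $T$ exists. The object $\pi(T)$ has $\GKdim \le 1$, and the inclusion $\pi(T) \hookrightarrow \scM$ must vanish by hypothesis, forcing $\pi(T) = 0$; that is, $T$ is finite-dimensional. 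Setting $M := M_0/T$ then produces another representative $\pi(M) = \scM$, and by maximality of $T$ every nonzero submodule of $M$ has GK-dim exactly $3$, so $\scM$ is torsion-free.

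The main obstacle is really organizational rather than conceptual: one must assemble the standard facts (Noetherianity of three-dimensional AS-regular algebras due to Artin--Schelter and Artin--Tate--Van den Bergh, the behavior of GK-dimension under short exact sequences, and exactness together with the lifting of subobjects in the Serre quotient $\qgr S = \gr S / \tors S$). None of these is deep in isolation, and once assembled the argument is essentially formal, so I would expect the proof in the literature to be short.
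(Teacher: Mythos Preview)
Your argument is correct. The paper does not actually supply a proof of this lemma; it merely records the statement with a citation to \cite[Lemma 2.2.1]{MR2708379} and uses it as input for the subsequent characterization of the objects $\scO(m)$. So there is no ``paper's own proof'' to compare against, and your write-up stands on its own as a valid justification. The only minor comment is that in the forward direction you implicitly use that $\GKdim$ on $\qgr S$ is well-defined (independent of the chosen lift) and is non-increasing on quotients; both follow immediately from the Hilbert-series definition and the fact that representatives of the same object in $\qgr S$ agree in large degree, so this is harmless.
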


The objects $\{\scO(m)\}_{m\in\bZ}$ have the following characterization.

\begin{proposition} \label{pr:characterization1}
The followings are equivalent for an object $\scI$ of the non-commutative projective plane $\qgr{S}$.
 \begin{enumerate}[(1)]
 \item There exists an integer $m\in\bZ$ such that $\scI$ is isomorphic to $\scO(m)$.
 \item $\scI$ is torsion-free of rank one and satisfies the condition
 \begin{align}
 \chi(\scI , \scI )=\sum_i(-1)^i\dim\Ext^i( \scI , \scI )=1.
 \end{align}
 \end{enumerate}
 \end{proposition}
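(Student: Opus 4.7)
I plan to handle the two implications separately. For (1) $\Rightarrow$ (2), I expect a direct verification. Since any 3-dimensional quadratic AS-regular algebra is a Noetherian domain, the module $S(m)$ has no nonzero submodule of Gelfand--Kirillov dimension less than $3$, so $\cO(m) = \pi(S(m))$ is torsion-free. Its characteristic polynomial is $q_{S(m)}(t) = t^{-m}$ with $q_{S(m)}(1) = 1$, which gives rank $1$. Finally, \pref{th:Beilinson} makes $\cO$ exceptional, hence $\chi(\cO, \cO) = 1$, and twist-invariance of the Euler form yields $\chi(\cO(m), \cO(m)) = 1$.

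The substantive direction is (2) $\Rightarrow$ (1). First, I would let $M$ be a torsion-free representative of $\cI$ and expand its characteristic polynomial as $q_M(t) \equiv 1 + a(1-t) + b(1-t)^2 \pmod{(1-t)^3}$; this residue class is a well-defined invariant of $\cI \in \qgr S$, because altering the representative by a finite-dimensional module contributes a multiple of $(1-t)^3$. Since twisting $\cI$ by $n \in \bZ$ sends $q_M(t) \mapsto t^{-n} q_M(t)$, which in particular shifts $a$ by $n$, I would normalize to the case $a = 0$ and reduce the problem to proving $\cI \cong \cO$.

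Next I would express $\chi(\cI, \cI)$ as a function of $b$ alone. Using the Beilinson-type resolution provided by \pref{th:Beilinson}, the class $[\cI] \in K_0(\qgr S)$ is determined by the triple $(1, 0, b)$, and the Euler pairing can be evaluated from the fundamental values $\chi(\cO(i), \cO(j)) = \dim_\bk S_{j-i}$ (extended to negative differences via the Serre functor of $\qgr S$, which is a twist of $\cM \mapsto \cM(-3)[2]$). A direct calculation formally parallel to the commutative identity $\chi(\cI_Z, \cI_Z) = 1 - 2 \deg Z$ for an ideal sheaf of a $0$-dimensional subscheme of $\bP^2$ should yield $\chi(\cI, \cI) = 1 - 2b$; the hypothesis $\chi(\cI, \cI) = 1$ then forces $b = 0$.

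Finally I would exhibit an isomorphism $\varphi \colon \cO \to \cI$. Coincidence of truncated characteristic polynomials gives $\dim_\bk \Hom_{\qgr S}(\cO, \cI) \ge 1$, so a nonzero $\varphi$ exists; its kernel is a rank-zero subobject of the torsion-free object $\cO$ and hence vanishes, while its cokernel has rank $0$ together with vanishing truncated characteristic polynomial, so any representative in $\gr S$ is finite-dimensional and the cokernel dies in $\qgr S$. Hence $\varphi$ is an isomorphism. The main obstacle will be the derivation of the Riemann--Roch-type identity $\chi(\cI, \cI) = 1 - 2b$; once that is in place, the remaining steps are formal consequences of torsion-freeness and the exceptional-collection formalism.
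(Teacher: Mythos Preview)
Your overall strategy matches the paper's: normalize the $K_0$-class by a twist, compute the Euler form to pin down the remaining invariant, and conclude that the normalized object is $\cO$. The paper writes $[\cI] = [\cO] - n[\cP]$ after a twist and cites \cite[Theorem 2.2.11]{MR2708379} for the fact that a torsion-free rank-one object with this class is isomorphic to $\cO$ precisely when $n = 0$, then records $\chi(\cI,\cI) = 1 - 2n$. Your $b$ is their $n$, and your Riemann--Roch identity is the same computation. The difference is that where the paper outsources the step ``$n=0 \Rightarrow \cI \cong \cO$'' to De~Naeghel--Van~den~Bergh, you try to do it by hand.

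That direct argument has a gap. The sentence ``Coincidence of truncated characteristic polynomials gives $\dim_\bk \Hom_{\qgr S}(\cO, \cI) \ge 1$'' is not justified: equality in $K_0$ only gives $\chi(\cO,\cI) = \chi(\cO,\cO) = 1$, an alternating sum, and nothing you have written excludes $\Hom(\cO,\cI)=0$ with $\Ext^2(\cO,\cI)$ large. What is missing is the vanishing of $\Ext^2(\cO,\cI)$. One way to supply it: the Serre functor on $\qgr S$ is $\cM \mapsto \cM_\varphi(-3)[2]$ for a graded automorphism $\varphi$, so $\Ext^2(\cO,\cI)^\vee \cong \Hom(\cI, \cO_\varphi(-3))$; a nonzero such map would be injective (both source and target are torsion-free of rank one), and the cokernel would have $(r,a) = (0,-3)$, impossible since a genuine object must have $a \ge 0$ when $r = 0$. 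With $\Ext^2 = 0$ in hand, $\Hom(\cO,\cI) \ne 0$ follows from $\chi = 1$, and the rest of your argument (injectivity via torsion-freeness of $\cO$, vanishing of the cokernel from $[\cI] = [\cO]$) goes through.
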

 
\begin{proof}
Recall from \cite[Section 2.2.2]{MR2708379}
that the Grothendieck group $K(\qgr{S})$ is freely generated
by the classes
$
[\cO], [\cS], [\cP]
$
of the free module $\cO$,
a line module $\cS$,
and a point module $\cP$.
Let $\scI$ be a torsion-free object of rank one.
One can assume that the class $[\cI]$ in the Grothendieck group
$K(\qgr{S})$ is equal to $[\scO]-n[\scP]$ for some $n \in \bZ$
by replacing $\cI$ with $\cI(m)$ for some $m \in \bZ$ if necessary
\cite[Section 2.2.3]{MR2708379}.
Such an object is isomorphic to $\cO$
if and only if $n=0$
\cite[Theorem 2.2.11]{MR2708379}.
Now \pref{pr:characterization1} follows from
$ 
 \chi(\cI(m), \cI(m)) = \chi( \scI , \scI)=1-2n.
$
\end{proof}

Next we rephrase the notions appearing in
\pref{pr:characterization1}
in terms of the intrinsic structures of the abelian category $\qgr{S}$.

\begin{definition}
Let $\scC=\qgr{S}$ be a non-commutative projective plane.
Denote by $\scC_0$ the Serre subcategory
of $\scC$ consisting of objects with finite length.
We inductively define the categories $\scC_i$ ($i>0$) to be
the Serre subcategories of the objects $\scM\in\scC$ whose
images in the quotient category $\scC/\scC_{i-1}$ have finite lengths.
Let $\scCtilde_i \subset \grmod(S)$ be the
pre-image of $\scC_i$ under the functor $\pi$, and
set $\scCtilde_{-1}=\tors(S)$. 
\end{definition}

%Note that $\scC_2=\qgr{S}$, $\scCtilde_2=\grmod(S)$, and
%$\scC_i=\scCtilde_i/\scCtilde_{-1}$.
%
We learned the following definition from Izuru Mori.

\begin{definition}
The Krull dimension $\Kdim \cM$ of an object $\cM \in \cC$
is the integer $i$ such that
$\cM \in \cC_i$ and $\cM \nin \cC_{i-1}$.
\end{definition}

\pref{pr:Kdim=GKdim} below is a special case
of a more general result
in \cite{Mori_in_preparation}.
We reproduce his proof below
for the sake of completeness.
Mori also pointed out that \pref{pr:Kdim=GKdim} also follows
from \cite[Proposition 2.4 and Theorem 3.1]{MR1438182}.

\begin{proposition} \label{pr:Kdim=GKdim}
For any object $\cM \in \cC$,
one has
\begin{align}
 \Kdim \cM = \GKdim \cM.
\end{align}
\end{proposition}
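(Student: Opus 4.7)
The plan is to introduce, for each non-negative integer $i$, the full subcategory $\cC^{\le i} \subset \cC$ of objects with $\GKdim \le i$, and to prove $\cC^{\le i} = \cC_i$ by induction on $i$. First I would verify that $\cC^{\le i}$ is a Serre subcategory: additivity of the Hilbert series on short exact sequences in $\gr S$ yields additivity of the characteristic polynomial, so $\GKdim$ of the middle term of a short exact sequence equals the maximum of the GK-dimensions of the sub and quotient, and this passes to $\cC$ via the exact functor $\pi$.

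For the base case $i = 0$: if $\cM \in \cC$ has $\GKdim \cM = 0$ with representative $M$, then $\GKdim M = 1$ and $q_M(t) = b(1-t)^2 + f(t)(1-t)^3$ with $b > 0$; any strictly descending chain of subobjects of $\cM$ in $\cC$ has successive quotients still of GK-dimension zero, each contributing a positive integer to $b$ by additivity, forcing the chain to terminate after at most $b$ steps, so $\cM$ has finite length. Conversely, any simple object of $\cC$ has GK-dimension zero, since a representative $M$ with $\GKdim M \ge 2$ would admit a proper submodule $x M$ whose quotient is infinite-dimensional for a generic $x \in S_1$, contradicting simplicity in $\cC$; induction on length then gives $\cC_0 \subseteq \cC^{\le 0}$.

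For the inductive step, assume $\cC_{i-1} = \cC^{\le i-1}$. If $\cM \in \cC^{\le i}$ is represented by $M$ with $\GKdim M = i+1$, any strictly descending chain in $\cC/\cC_{i-1}$ lifts to submodules of $M$ whose successive quotients lie outside $\cC^{\le i-1}$, hence have GK-dimension exactly $i+1$ in $\gr S$; additivity of the $(1-t)^{2-i}$-coefficient of $q_M(t)$ forces the chain to terminate, so $\cM \in \cC_i$. Conversely, if $\cM \in \cC_i$ had $\GKdim \cM > i$, choosing a non-zero-divisor $x$ of positive degree on a representative $M$ produces an infinite chain $M \supset x M \supset x^2 M \supset \cdots$ whose successive quotients have GK-dim $\GKdim M - 1 \ge i+1$ in $\gr S$ and are therefore non-zero in $\cC/\cC_{i-1}$, contradicting finite length.

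The main obstacle will be the precise bookkeeping in the quotient $\cC/\cC_{i-1}$: one must lift strictly descending chains of subobjects in the quotient to chains of submodules in $\gr S$ whose successive quotients have exactly the predicted GK-dimension, and one must guarantee the existence of the non-zero-divisor $x$ for the converse direction. Both points are addressed by the Serre property of $\cC^{\le i-1}$ established at the outset together with the explicit Hilbert-series formalism for AS-regular algebras; invoking the general theorem of \cite{MR1438182} offers a shortcut, but a direct proof exposing the analogy with the commutative projective plane seems more illuminating.
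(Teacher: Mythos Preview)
Your overall strategy---showing $\cC^{\le i} = \cC_i$ by induction on $i$, with the forward inclusion coming from additivity of the leading Hilbert coefficient and the reverse from exhibiting infinite chains---is reasonable, and the forward direction is fine. The gap is in the reverse direction, where you produce chains by multiplying by an element $x$.

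In the noncommutative setting, for a right $S$-module $M$ and $x \in S_1$, the map $m \mapsto m x$ is only $\bk$-linear, not $S$-linear, so its image $Mx$ is \emph{not} a submodule in general (and $xM$ does not even make sense). Your phrases ``a proper submodule $xM$'' and ``an infinite chain $M \supset xM \supset x^2M \supset \cdots$'' therefore have no content unless $x$ is central (or at least normal). Nothing in the Hilbert-series formalism or the Serre property of $\cC^{\le i-1}$ supplies such an element; this is exactly the place where the noncommutativity bites, and your closing paragraph glosses over it.

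The paper's proof addresses precisely this point: it invokes the central element $g \in S_3$ whose existence, for quadratic AS-regular algebras of dimension~3, is a nontrivial input from Artin--Tate--Van den Bergh (the quotient $S/(g)$ being a twisted homogeneous coordinate ring of the curve $E$). Because $g$ is central, $M g$ \emph{is} a submodule, multiplication by $g$ is a genuine module map $M(-3) \to M$, and one can split any $M$ into its $g$-torsion part (handled via the known commutative/twisted case over $E$) and a $g$-torsion-free part (on which the infinite chain $M \supset Mg \supset Mg^2 \supset \cdots$ works). If you replace your unspecified ``generic $x \in S_1$'' by this central $g$ and treat the $g$-torsion separately, your inductive scheme can be completed; as written, however, the argument does not go through.
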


\begin{proof}
By definitions, it suffices to show
\begin{equation}
\Kdim M = \GKdim M
\end{equation}
for any object
$
M \in \grmod S
$.
It is known (see \cite{Artin_Tate_Van_den_Bergh})
that there exists a central element
$
g
\in
S_3
$
of degree three such that
$
S/(g)
$
is isomorphic to the twisted homogeneous coordinate ring of the triple
$
(E , L_0 , \sigma )
$.
In particular, the assertion is true for any object of the category
$
\grmod S/(g)
$. This immediately implies that the assertion is true for any object
$
M
\in
\grmod S
$
satisfying
$
M \cdot g=0
$.

If
$
M \cdot g^{\ell} = 0
$
for some
$
\ell > 0
$,
then by using the filtration
\begin{equation}
0 = M \cdot g^{\ell}
\subset
M \cdot g^{\ell -1}
\subset
\cdots
\subset
M \cdot g
\subset
M
\end{equation}
whose subquotients are annihilated by
$
g
$,
we can again check the assertion for such
$
M
$
and see
$
M
\in
\scCtilde_1
$.

Next, take
$
M
\in
\grmod S
$
such that the homomorphism
\begin{equation}
\cdot g \colon M \to M(3)
\end{equation}
is injective.
Suppose for a contradiction that
$
\Kdim M = 2
$.
Since
$
\pi
\colon
\scCtilde_1
\to
\scCtilde_1 / \scCtilde_{0}
$
is exact (\cite[Lemma A.2.3]{MR1812507}),
we obtain an exact sequence
\begin{equation}
0 \to \pi(M) \xto[]{\cdot g}  \pi(M) \to \pi (M/(g)) \to 0.
\end{equation}
Since
$
\pi (M/(g)) \not= 0
$,
this contradicts the fact that objects of the category
$
\scCtilde_{1}/ \scCtilde_{0}
$
have finite length.
Therefore we see
$
\Kdim M = 3
$,
and also see that
$
\GKdim M
=
\GKdim M/(g) +1
=
3
$.

Finally let
$
M
\in
\grmod S
$
be an arbitrary object.
By setting
$
\tau M
=
\ker ( M \xto[]{\cdot g } M)
$,
we can combine our results obtained so far to see
\begin{equation}
\Kdim M
=
\max \{ \Kdim M/\tau M , \Kdim M \}
=
\max \{ \GKdim M/\tau M , \GKdim M \}
=
\GKdim M.
\end{equation}
Thus we conclude the proof.
\end{proof}

\begin{proposition} \label{pr:rank=length}
The rank of an object of $\cC$ coincides
with its length in $\cC/\cC_1$.
\end{proposition}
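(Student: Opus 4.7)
I would begin by noting that both the rank and the length in the quotient category $\cC/\cC_1$ are additive on short exact sequences. For the rank this is immediate from additivity of the Hilbert series and from reading off the leading coefficient of the expansion $q_M(t) = r + a(1-t) + b(1-t)^2 + f(t)(1-t)^3$. The rank is non-negative (the top-degree coefficient of $h_M(n)$ for $n \gg 0$ equals $r/2 \cdot n^2$ plus lower order terms), and by the definition of $\GKdim$ for graded modules combined with \pref{pr:Kdim=GKdim} we have $r(\cM) = 0$ if and only if $\GKdim \cM \le 1$, i.e.\ $\cM \in \cC_1$. Hence rank descends to a well-defined additive function on $K(\cC/\cC_1)$.

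Next I would characterise the simple objects of $\cC/\cC_1$ as the images of the torsion-free objects of rank one. One direction is immediate: if $\cM$ is torsion-free with $r(\cM)=1$ and $\cM'\subsetneq \cM$ is a proper sub-object whose image in $\cC/\cC_1$ is non-zero, then both $\cM'$ and $\cM/\cM'$ would have strictly positive rank, contradicting additivity. Conversely, let $\cM$ be a torsion-free representative of a simple object in $\cC/\cC_1$ (we may always pass to $\cM / \tau\cM$ where $\tau\cM$ is the maximal sub-object in $\cC_1$). Since $\cM\ne 0$, the standard Serre vanishing on the non-commutative projective plane yields $\Hom(\cO(-n),\cM)\ne 0$ for $n \gg 0$. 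Any non-zero such morphism $\phi \colon \cO(-n) \to \cM$ must be injective: its image has rank at most $r(\cO(-n))=1$, and if the image had rank $0$ it would be a sub-object in $\cC_1$ of the torsion-free $\cM$, hence zero; so the image has rank $1$, which forces the kernel to have rank $0$ and thus to be zero in the torsion-free $\cO(-n)$. This embeds $\cO(-n)$ as a rank-one sub-object of $\cM$, and simplicity in $\cC/\cC_1$ identifies it with $\cM$ modulo $\cC_1$; in particular $r(\cM)=r(\cO(-n))=1$.

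Finally, I would argue by induction on $r(\cM)$ that $\cM$ has finite length in $\cC/\cC_1$ equal to $r(\cM)$. The base case $r(\cM)=0$ gives $\cM \in \cC_1$, i.e.\ zero and length zero in $\cC/\cC_1$. For $r(\cM)\ge 1$, after replacing $\cM$ by $\cM/\tau\cM$ one finds as above an embedding $\cO(-n) \hookrightarrow \cM$; set $\cM' = \cO(-n)$ and apply the induction hypothesis to $\cM/\cM'$, whose rank is $r(\cM)-1$ by additivity. This produces a composition series in $\cC/\cC_1$ whose successive quotients have rank one, i.e.\ are simple by the previous paragraph, so the length of $\cM$ in $\cC/\cC_1$ equals $r(\cM)$.

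The main obstacle is setting up the global-section / vanishing statement used to extract the embedding $\cO(-n) \hookrightarrow \cM$ from a torsion-free $\cM$: this is the standard analogue of Serre's theorem for $\qgr S$ with $S$ a $3$-dimensional quadratic AS-regular algebra, but one has to verify the torsion-free hypothesis propagates to the quotient $\cM/\cM'$ (by passing again to its torsion-free quotient if necessary) so that the induction can be iterated. Once this is in place, the remaining steps are purely formal manipulations with additive functions on the Grothendieck group of $\cC/\cC_1$.
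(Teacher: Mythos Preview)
Your proof is correct and follows essentially the same strategy as the paper: induction on the rank, using a nonzero morphism $\cO(-n)\to\cM$ (equivalently $S\to M(m)$) for $n\gg 0$, together with torsion-freeness to show it is injective with rank-one image, and then applying the induction hypothesis to the cokernel. The paper argues the upper bound on length separately ``by looking at the Hilbert polynomial'', whereas you package both bounds into a single induction by first characterising the simple objects of $\cC/\cC_1$ as rank-one torsion-free objects; this is a mild cosmetic difference, not a different idea.
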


\begin{proof}
We prove \pref{pr:rank=length}
by induction on the rank of an object $M \in \cC$.
The case when $\rank{M}=0$ is an immediate consequence of \pref{pr:Kdim=GKdim}.
Let $r$ be a positive integer, and
assume that \pref{pr:rank=length}
for all $N \in \cC$ with $\rank{N}<r$.
Let $M \in \cC$ be an object of rank $r$, and
$T \subset M$ be the maximal submodule of GK-dimension less than three,
whose existence is guaranteed since $M$ is Noetherian.
Since we are interested in the length of $M$
in $\scCtilde/\scCtilde_1$, we can replace $M$ with $M/T$, so that
$M$ has no non-trivial submodule of GK-dimension less than three
(in the terminology of \cite{MR2708379}, we assumed that $M$ is \emph{pure}).

Since $M\not=0$,
for sufficiently large $m>0$,
there exists a non-trivial morphism
\begin{align}
 f\colon S\to M(m).
\end{align}
By the purity of $M$, we see that $\Image(f)\subset M(m)$
has GK-dimension three. Moreover the existence of a surjective morphism $S\twoheadrightarrow\Image(f)$
shows $\rank\Image(f)=1$. By an easy calculation of the Hilbert polynomials, we see that
the rank of $\coker(f)$ is $r-1$. By the induction hypothesis, this means that
the length of $\coker(f)$ is $r-1$.
Since the morphism $M\twoheadrightarrow \coker(f)$ is non-trivial in $\scCtilde/\scCtilde_1$,
we see that the length of $M$ is at least $1+(r-1)=r$.
Conversely, by looking at the Hilbert polynomial of $M$, it is easy to show that
the length of $M$ in $\scCtilde/\scCtilde_1$ is at most $r$.
This concludes the proof of \pref{pr:rank=length}.
\end{proof}

Propositions \ref{pr:characterization1},
\ref{pr:Kdim=GKdim}, and \ref{pr:rank=length}
immediately implies the following:

\begin{corollary}\label{cr:Intrinsic_characterization_of_the_structure_sheaf_modulo_shifts}
An object $\scI\in\scC$ is isomorphic to $\scO(m)$ for some $m\in\bZ$
if and only if it satisfies the following properties:
\begin{itemize}
\item $\Hom(\scN, \scI)=0$ for all $\scN\in\scC_1$.
\item $\scI$ has length one in $\scC/\scC_1$.
\item $\chi(\scI,\scI)=1$.
\end{itemize}
\end{corollary}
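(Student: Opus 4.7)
The plan is to match the three bullets in the statement, one by one, with the three conditions in \pref{pr:characterization1}, using \pref{pr:Kdim=GKdim} and \pref{pr:rank=length} as dictionaries. Since \pref{pr:characterization1} already gives the desired characterization in the language of torsion-freeness, rank, and $\chi$, no new categorical ideas are needed; the task is just to translate.

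First I would translate the torsion-freeness condition. Recall from the lemma of Mori--Smith cited just before \pref{pr:characterization1} that an object $\scI \in \scC$ is torsion-free iff $\Hom(\scN, \scI) = 0$ for every $\scN \in \scC$ with $\GKdim \scN \le 1$. By \pref{pr:Kdim=GKdim}, having GK-dimension at most $1$ is the same as having Krull dimension at most $1$, i.e.\ as lying in $\scC_1$. Thus torsion-freeness is exactly the first bullet of the corollary.

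Next I would translate the rank condition. By definition, the rank of an object of $\scC$ is defined via its Hilbert series; \pref{pr:rank=length} identifies this integer with the length of the object in the quotient category $\scC / \scC_1$. Hence $\scI$ has rank one iff it has length one in $\scC/\scC_1$, which is the second bullet. The third condition $\chi(\scI, \scI) = 1$ is literally the second condition in \pref{pr:characterization1}, so there is nothing to check.

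Putting the three translations together, $\scI$ satisfies the three bullets of the corollary if and only if $\scI$ is torsion-free of rank one with $\chi(\scI, \scI) = 1$, which by \pref{pr:characterization1} is equivalent to $\scI \cong \scO(m)$ for some $m \in \bZ$. There is no real obstacle here; the only subtle step is making sure that the definition of $\scC_1$ used to formulate the corollary matches the GK-dimension $\le 1$ condition used to define torsion-freeness, and that is precisely what \pref{pr:Kdim=GKdim} supplies.
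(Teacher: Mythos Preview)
Your proposal is correct and matches the paper's approach exactly: the paper simply states that the corollary follows immediately from \pref{pr:characterization1}, \pref{pr:Kdim=GKdim}, and \pref{pr:rank=length}, and your write-up just unpacks this by translating torsion-freeness into the first bullet via the cited lemma and \pref{pr:Kdim=GKdim}, rank one into the second bullet via \pref{pr:rank=length}, and leaving the $\chi$ condition untouched.
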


Since the subcategories $\cC_i$
and the lengths of objects
are intrinsic notions of the category $\cC$,
one can identify the set of objects $\{ \cO(m) \}_{m\in\bZ}$
up to isomorphisms
using only the structure of $\cC$ as an abelian category.
This concludes the proof of the following:

\begin{theorem}[{\cite[Section 11.2]{MR1816070}}] \label{th:VdB}
Let $S$ be a $3$-dimensional quadratic AS-regular algebra.
Then the sequence
$
 ( \scO(n) )_{n\in\bZ}
$
of objects is determined
up to shift
by the structure of $\qgr S$
as an abelian category.
\end{theorem}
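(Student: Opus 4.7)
The plan is to deduce \pref{th:VdB} as a nearly immediate consequence of \pref{cr:Intrinsic_characterization_of_the_structure_sheaf_modulo_shifts}, by verifying that the three conditions appearing there are intrinsic to the abelian category $\cC = \qgr S$, and then showing that the morphism structure among objects satisfying those conditions produces a $\bZ$-indexed sequence whose only ambiguity is a global shift.

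First, I would argue that the hypotheses of \pref{cr:Intrinsic_characterization_of_the_structure_sheaf_modulo_shifts} depend only on the abelian structure of $\cC$. The chain $\cC_0 \subset \cC_1 \subset \cC_2 = \cC$ is constructed inductively: $\cC_0$ is the Serre subcategory of finite length objects of $\cC$, and $\cC_i$ is the Serre subcategory of objects whose image in $\cC/\cC_{i-1}$ has finite length. This uses only the notions of subobject, quotient object, and Jordan--H\"older length, all of which are intrinsic. The Hom and Ext groups are intrinsic as well, and the Euler form $\chi(\cI,\cI)=\sum_i(-1)^i\dim\Ext^i(\cI,\cI)$ makes sense as a finite sum because $\cC$ has finite homological dimension (inherited from the finite projective resolution \eqref{eq:AS-regular} of the simple module and the exactness of $\pi$). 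Thus the predicate ``$\cI \cong \cO(m)$ for some $m \in \bZ$'' can be evaluated purely from the abelian category structure of $\cC$.

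Next, I would use this to recover the collection $\{\cO(m)\}_{m\in\bZ}$ as a subset of isomorphism classes of objects of $\cC$, intrinsically. To upgrade this unordered set to a sequence, I would reintroduce the linear order using morphisms: for any two members $\cI, \cI'$ of the collection there is a nonzero morphism $\cI \to \cI'$ if and only if $\cI \cong \cO(m)$ and $\cI' \cong \cO(n)$ with $m \le n$, because $\dim \Hom(\cO(m),\cO(n)) = \binom{n-m+2}{2}$ for $n \ge m$ and vanishes otherwise. This is visibly intrinsic and gives a total order on the collection of isomorphism type $\bZ$. The integer $n - m$ is moreover recovered from $\dim \Hom(\cO(m),\cO(n))$, so the combinatorial spacing is determined; the only freedom left is the choice of which member is labelled $\cO(0)$, and this is precisely the ``up to shift'' ambiguity in the statement.

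The main point that might require a bit of care is confirming that Hom-finiteness and the vanishing of higher Ext for large $i$ really do follow from the abelian structure alone, so that $\chi(\cI,\cI)$ makes intrinsic sense: this is where one uses that $\cC$ has enough injectives and finite global dimension, both of which can be read off from $\cC$ abstractly rather than from a presentation by $S$. Once that is in place, the theorem follows immediately: any abelian equivalence $\cC \simto \cC'$ must carry the distinguished collection $\{\cO(m)\}_{m\in\bZ}$ to its counterpart in $\cC'$ and, by matching the Hom-dimensions, preserve the total order up to an overall shift of the indexing.
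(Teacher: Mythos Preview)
Your proposal is correct and follows essentially the same route as the paper: deduce the result from \pref{cr:Intrinsic_characterization_of_the_structure_sheaf_modulo_shifts} by noting that the filtration $\cC_0\subset\cC_1\subset\cC$, lengths in quotient categories, and Ext groups are preserved by abelian equivalences, then recover the ordering via the vanishing pattern of $\Hom(\cO(m),\cO(n))$. The paper actually postpones the ordering step to the proof of \pref{th:SVdB}, and your worry about the Euler form being intrinsically defined is unnecessary: you only need that $\Ext^i$ is preserved by equivalences, not that finite global dimension can be detected abstractly.
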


%Let $\GrAlg(\bZ)$ and $\Alg(\bZ)$ be
%the categories of $\bZ$-graded algebras
%and $\bZ$-algebras respectively.
There exists a natural functor
\begin{align}
 (\check{-})\colon \GrAlg(\bZ)\to \Alg(\bZ) 
\end{align}
from the category of $\bZ$-graded algebras
to the category of $\bZ$-algebras,
which associates the $\bZ$-algebra
$\Av = \bigoplus_{i,j\in \bZ} \Av_{ij}$
to a $\bZ$-graded algebra $A = \bigoplus_{i \in \bZ} A_i$
where $\Av_{ij}=A_{j-i}$.
A pair of $\bZ$-graded algebras $A$ and $B$ satisfy $\Av\cong\Bv$
if and only if $A$ is isomorphic
to the Zhang twist of $B$
\cite[Theorem 1.2]{MR2776790}.
This is also equivalent to the existence
of an equivalence $\Phi\colon \gr A \to \gr B$
sending $A(n)$ to $B(n)$ for all $n\in \bZ$
\cite[Theorem 1.1]{MR2776790}.
%$\bZ$-algebras coming from $\bZ$-graded algebras
%are characterized by \emph{periodicity}:
A $\bZ$-algebra $A$ is \emph{$m$-periodic}
if $A$ is isomorphic to the shifted $\bZ$-algebra $A(m)$
defined by
$
 A(m)_{ij}=A_{i+m,j+m}.
$
A $\bZ$-algebra is isomorphic to $\Bv$
for some $\bZ$-graded algebra $B$
if and only if it is $1$-periodic
\cite[Lemma 2.4]{MR2836401}.

%The category $\QGr(A)$, and hence $\qgr(A)$, does not change
%if we replace $A$ with a Veronese subalgebra (see \cite[Lemma 2.5]{MR2836401}).

\begin{theorem}[{\cite[Theorem 3.4]{MR2836401}}]
Any quadratic AS-regular $\bZ$-algebra of dimension 3 is one-periodic.
\end{theorem}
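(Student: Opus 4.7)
The plan is to reduce the statement to the reconstruction of a $\bZ$-graded algebra $S$ such that $A \cong \check{S}$, and then observe that $\check{S}$ is tautologically $1$-periodic. Concretely, I would start by applying the classification of quadratic AS-regular $\bZ$-algebras of dimension $3$ (\pref{th:BP}) together with the Artin--Tate--Van den Bergh classification \cite{Artin_Tate_Van_den_Bergh} to produce a quadratic AS-regular graded algebra $S$ corresponding to the same admissible triple as $A$ (choosing an automorphism $\sigma$ with $\sigma^{*}L_{0}\cong L_{1}$). By the compatibility of the two classifications through the triple, this yields an equivalence $\qgr A \simeq \qgr S$ of abelian categories.

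Next I would invoke \pref{th:VdB}: under the equivalence $\qgr A\simeq \qgr S$, the two natural sequences of objects $(\cO_{A}(n))_{n\in\bZ}$ (arising from $\pi(e_{n}A)$) and $(\cO_{S}(n))_{n\in\bZ}$ (arising from $\pi(S(n))$) are both intrinsic and must therefore agree up to a uniform shift: there exists $k\in\bZ$ with $\cO_{A}(n)\simeq \cO_{S}(n+k)$ for all $n$. Using the fact that the projective objects $e_{n}A$ are torsion-free in $\qgr A$ (the $\bZ$-algebra analogue of the standard statement for AS-regular graded algebras, which is where the minimal resolution of $S_{i}$ and the Hilbert-series/GK-dimension analysis of \pref{pr:Kdim=GKdim} enters), I obtain the reconstruction identity
\begin{align*}
A_{ij}\ =\ \Hom_{\gr A}(e_{j}A,e_{i}A)\ \cong\ \Hom_{\qgr A}(\cO_{A}(j),\cO_{A}(i)).
\end{align*}
Transporting along the equivalence and using $\Hom_{\qgr S}(\cO_{S}(a),\cO_{S}(b))\cong S_{b-a}$ gives
\begin{align*}
A_{ij}\ \cong\ \Hom_{\qgr S}(\cO_{S}(j+k),\cO_{S}(i+k))\ \cong\ S_{j-i}\ =\ \check{S}_{ij},
\end{align*}
which assembles into an isomorphism $A\cong \check{S}$ of $\bZ$-algebras (modulo a straightforward check that the Hom-compositions on each side match).

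Finally, the shifted $\bZ$-algebra satisfies $\check{S}(1)_{ij}=\check{S}_{i+1,j+1}=S_{(j+1)-(i+1)}=S_{j-i}=\check{S}_{ij}$ on the nose, so $\check{S}$ is tautologically $1$-periodic; therefore so is $A$. Alternatively one may invoke \cite[Lemma 2.4]{MR2836401}, which states that a $\bZ$-algebra is of the form $\check{S}$ precisely when it is $1$-periodic.

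The main obstacle is the reconstruction step $A_{ij}\cong\Hom_{\qgr A}(\cO_{A}(j),\cO_{A}(i))$: everything else is essentially formal once \pref{th:VdB} is in hand. This requires showing that the projective modules $e_{i}A$ are torsion-free, which in turn rests on the AS-regularity hypothesis via the minimal resolution of the $S_{i}$ and the Hilbert-polynomial/Krull-dimension arguments that already appear in the proof of \pref{th:VdB}; one also needs to check that graded $\Hom$ and $\qgr$-$\Hom$ agree in the relevant range, which is the standard $\bZ$-algebra counterpart of the familiar statement for graded AS-regular algebras.
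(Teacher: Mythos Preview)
The paper does not prove this statement; it is quoted verbatim from \cite[Theorem 3.4]{MR2836401} and used as input to the proof of \pref{th:SVdB}. So there is no ``paper's own proof'' to compare against, and your proposal should be read on its own merits.

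Your overall strategy---produce a graded AS-regular algebra $S$ with $A\cong\check{S}$---is the right one, but the argument has a genuine gap and an unnecessary detour. The gap is the parenthetical ``choosing an automorphism $\sigma$ with $\sigma^{*}L_{0}\cong L_{1}$'': this is precisely the nontrivial content of the theorem. For a smooth elliptic curve a suitable translation does the job, but an admissible triple in the sense of \pref{df:admissible} allows $E$ to be any plane cubic, including reducible and non-reduced ones; showing that a scheme automorphism carrying $L_0$ to $L_1$ exists in all those cases is exactly what \cite[Theorem 3.4]{MR2836401} establishes, and you have not argued it. The detour is everything after that: once $\sigma$ exists, the graded algebra $S=S(E,\sigma,L_0)$ has $\check{S}$ associated to the triple $(E,L_0,\sigma^{*}L_0)=(E,L_0,L_1)$, and the bijection in \pref{th:trinity_ncp2} immediately gives $A\cong\check{S}$ as $\bZ$-algebras---no passage through $\qgr$, no invocation of \pref{th:VdB}, and none of the torsion-freeness or $\Hom$-comparison checks you flag at the end. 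Indeed, your intermediate step ``the compatibility of the two classifications yields $\qgr A\simeq\qgr S$'' is itself hard to justify without first knowing $A\cong\check{S}$, at which point the conclusion is already in hand.
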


Hence any noncommutative projective plane
is equivalent to $\qgr A$
for some quadratic AS-regular $\bZ$-algebra $A$ of dimension 3.
For a quadratic AS-regular algebra $S=S(E,\sigma,L_0)$
associated with a triple $(E,\sigma, L_0)$,
the $\bZ$-algebra $A$ associated with the triple
$(E,L_0,L_1:=\sigma^* L_0)$ satisfies $A=\Sv$.

%Now we can prove \pref{th:SVdB}.

%\begin{theorem}[{$=$ \cite[Corollary 11.2.4]{MR1816070}}]\label{th:elliptic_triples_vs_nc_planes}
%Quadratic AS-regular $\bZ$-algebras of dimension 3 are isomorphic
%if and only if the corresponding non-commutative projective planes are equivalent
%as abelian categories.
%\end{theorem}
\begin{proof}[Proof of \pref{th:SVdB}]
We show that
for any $3$-dimensional quadratic regular $\bZ$-algebra $A$,
one can recover its isomorphism class
from the $\bk$-linear abelian category $\qgr A$.

Let $(C, L_0, L_1)$ be a triple
which gives rise to $A$, and
$\sigma$ be an automorphism of $C$ such that
$\sigma^*L_0\cong L_1$.
Recall that $A$ is isomorphic to $\Sv$,
where $S$ is the quadratic regular algebra
associated to $(C, \sigma, L_0)$.
In particular the category $\qgr A$ is 
equivalent to $\qgr S $.

By \pref{cr:Intrinsic_characterization_of_the_structure_sheaf_modulo_shifts},
one can identify the set of objects
$
 \{\scO(n)\}_{n\in\bZ}\subset\qgr{S}
$
up to isomorphisms
from the abelian category $\qgr{S}$.
Since $\Hom(\scO(m),\scO(n))=0$ if and only if $m>n$,
one can also recover the order among the objects $\{\scO(n)\}_{n\in\bZ}$
from the structure of $\qgr S$ as a category.
Hence one can recover the $\bZ$-algebra
\begin{align}
 \bigoplus_{m,n\in\bZ}\Hom(\scO(m),\scO(n))
\end{align}
up to shifts. Since the $\bZ$-algebra $A$ is associated to a graded algebra,
it is one periodic. Hence the ambiguity by shifts does not affect the isomorphism class of the
$\bZ$-algebra, and the $\bZ$-algebra thus obtained is isomorphic to $A$.
\end{proof}

\begin{corollary} \label{cr:Mori_question}
For 3-dimensional quadratic AS-regular algebras $S$ and $S'$,
the following are equivalent:
\begin{enumerate}[(1)]
 \item \label{it:associated_Z_algebras_are_isomorphic}
$\Sv\cong\Sv'$.
 \item \label{it:isomorphic_to_a_Zhang_twist}
$S$ is isomorphic to a Zhang twist of $S'$.
 \item \label{it:grmod_are_equivalent}
$\grmod(S)\cong\grmod(S')$.
 \item \label{it:qgr_are_equivalent}
$\qgr{S}\cong\qgr(S')$.
\end{enumerate}
\end{corollary}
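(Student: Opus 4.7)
The plan is to prove the cycle $(1) \iff (2) \implies (3) \implies (4) \implies (1)$, which yields the equivalence of all four conditions. The equivalence $(1) \iff (2)$ is immediate from Zhang's twisting theorem \cite[Theorem~1.2]{MR2776790}, recalled in the paragraph preceding \pref{th:SVdB}: two $\bZ$-graded algebras give rise to isomorphic associated $\bZ$-algebras precisely when one is a Zhang twist of the other. The implication $(2) \implies (3)$ then follows from the companion result \cite[Theorem~1.1]{MR2776790}, which produces an explicit equivalence $\grmod S \simto \grmod S'$ sending $S(n)$ to $S'(n)$ whenever $S$ is isomorphic to a Zhang twist of $S'$.

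For $(3) \implies (4)$, I would pass to the Serre quotient. The key observation is that the torsion subcategory $\tors S \subset \grmod S$ is intrinsically characterized inside the abelian category $\grmod S$ as the Serre subcategory of finite-length objects: for a connected graded algebra, a finitely generated graded module is torsion if and only if it is finite-dimensional over $\bk$. Hence any equivalence $\grmod S \simto \grmod S'$ preserves the torsion subcategories, and descends to an equivalence $\qgr S \simto \qgr S'$ of the Serre quotients.

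The substantive implication is $(4) \implies (1)$, which is where \pref{th:SVdB} is put to use. As noted in the paragraph just before the corollary, one has $\qgr \Sv \cong \qgr S$ and $\qgr \Sv' \cong \qgr S'$, and both $\Sv$ and $\Sv'$ are quadratic AS-regular $\bZ$-algebras of dimension $3$. An equivalence $\qgr S \cong \qgr S'$ thus gives $\qgr \Sv \cong \qgr \Sv'$, and the uniqueness statement in \pref{th:SVdB} forces $\Sv \cong \Sv'$. Since the main difficulty has already been resolved in proving \pref{th:SVdB}, the corollary itself presents no further obstacle; the hardest conceptual step, namely the intrinsic reconstruction of the sequence $(\scO(n))_{n \in \bZ}$ from the abelian structure of $\qgr S$, has already been carried out, and what remains is essentially an organizational consequence of the theorem combined with the two results of Zhang.
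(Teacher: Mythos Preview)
Your proof is correct and follows essentially the same route as the paper's: the paper also runs the cycle through \cite[Theorem~1.2]{MR2776790} for $(1)\iff(2)$, then $(2)\Rightarrow(3)\Rightarrow(4)$, and closes with \pref{th:SVdB} for $(4)\Rightarrow(1)$. The only cosmetic difference is that the paper cites \cite[Theorem~3.5]{MR1367080} (Zhang's original twisting paper) for the equivalence $(2)\iff(3)$ rather than \cite[Theorem~1.1]{MR2776790}, and simply says $(3)\Rightarrow(4)$ is ``clear'' where you spell out that the torsion subcategory is the finite-length subcategory; neither change affects the substance.
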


\begin{proof}
(\pref{it:associated_Z_algebras_are_isomorphic})
is equivalent to
(\pref{it:isomorphic_to_a_Zhang_twist})
 by
\cite[Theorem 1.2]{MR2776790},
and
(\pref{it:isomorphic_to_a_Zhang_twist})
is equivalent to
(\ref{it:grmod_are_equivalent})
by
\cite[Theorem 3.5]{MR1367080}.
(\ref{it:grmod_are_equivalent})
clearly implies
(\ref{it:qgr_are_equivalent}),
and 
(\ref{it:qgr_are_equivalent})
implies
(\ref{it:associated_Z_algebras_are_isomorphic})
by \pref{th:SVdB}.
\end{proof}

This answers a question of Mori \cite{MR2234308}
for non-commutative projective planes.

%\pref{cr:elliptic_pair_and_nc_p2} is a partial generalization of \cite[Theorem 5.2]{MR2234308}
%which treats the case when $E$ is a singular cubic divisor with only ODPs.

%\begin{remark}
%It follows from \cite[Theorem 4.17]{MR2770441} that
%the category of graded modules over a three-dimensional
%AS-regular quadratic graded algebra is
%reconstructed from the Beilinson algebra associated to it.
%This follows from \pref{cr:Mori_question} and \pref{th:trinity_ncp2}.
%\end{remark}

%\begin{remark}
%Geometric quadruples are identified with potentials of
%the quiver of the canonical line bundle of noncommutative projective planes.
%\end{remark}

\bibliographystyle{amsalpha}
\bibliography{bibs}

\noindent
Tarig Abdelgadir

Mathematics Section,
The Abdus Salam
International Centre for Theoretical Physics,
Strada Costiera 11,
I - 34151,
Trieste,
Italy.

{\em e-mail address}\ : \  tarig.m.h.abdelgadir@gmail.com

\ \vspace{0mm} \\

\noindent
Shinnosuke Okawa

Department of Mathematics,
Graduate School of Science,
Osaka University,
Machikaneyama 1-1,
Toyonaka,
Osaka,
560-0043,
Japan.

{\em e-mail address}\ : \  okawa@math.sci.osaka-u.ac.jp
\ \vspace{0mm} \\

\ \vspace{0mm} \\

\noindent
Kazushi Ueda

Department of Mathematics,
Graduate School of Science,
Osaka University,
Machikaneyama 1-1,
Toyonaka,
Osaka,
560-0043,
Japan.

{\em e-mail address}\ : \  kazushi@math.sci.osaka-u.ac.jp
\ \vspace{0mm} \\

\end{document}